\numberwithin{equation}{section}
\def\cyr{%
\renewcommand\rmdefault{wncyr}%
\renewcommand\sfdefault{wncyss}%
\renewcommand\encodingdefault{OT2}%
\normalfont
\selectfont}
\DeclareMathAlphabet{\zap}{OT1}{pzc}{m}{it}
\DeclareTextFontCommand{\textcyr}{\cyr}
\def\be{\begin{equation}}
\def\ee{\end{equation}}
\def\bea{\begin{eqnarray*}}
\def\eea{\end{eqnarray*}}
\newcommand{\rad}{\text{\cyr   ya}}
\newcommand{\eg}{\text{\cyr   \bf G}}
\def\Bbb{\mathbb}
\def\CC{\mathbb{C}}
\def\PP{\mathbb{P}}
\newtheorem{main}{Theorem}
\DeclareMathOperator{\Hom}{Hom}
\newtheorem{thm}{Theorem}[section]
\newtheorem{lem}[thm]{Lemma}
\newtheorem{prop}[thm]{Proposition}
\newtheorem{cor}[thm]{Corollary}
\newtheorem{defn}[thm]{Definition}
\newenvironment{rmk}{\mbox{ }\\{\bf  Remark}\mbox{ }}{
\hfill  $\diamondsuit$\mbox{}\bigskip}
\def\ZZ{{\mathbb{Z}}}
\def\RR{{\mathbb{R}}}
\def\CP{{\mathbb{C} \mathbb{P}}}
\begin{document}

\title{Mass in  K\"ahler Geometry}

\author{Hans-Joachim Hein\thanks{Research funded in part by NSF grant DMS-1514709.} 
~and Claude LeBrun\thanks{Research funded in part by NSF grant DMS-1510094.}}

\date{}
\maketitle

\begin{abstract}
We  prove a simple, 
 explicit formula for the mass of any asymptotically locally Euclidean (ALE) K\"ahler manifold, assuming only 
the sort of weak  fall-off conditions required for the mass to actually be well-defined. 
For ALE  scalar-flat K\"ahler manifolds,
the mass turns out to be a  topological invariant,  depending only
on the  underlying smooth manifold,  the first Chern class of the complex structure, and 
the K\"ahler class of the metric. 
When the metric is actually AE (asymptotically Euclidean), our formula not only 
implies a  positive mass theorem  for K\"ahler metrics, but also yields
a Penrose-type inequality for the mass.
\end{abstract}

A complete connected non-compact  
Riemannian manifold $(M,g)$ of dimension $n\geq 3$  is said to be {\em asymptotically Euclidean} (or {\em AE}\,) if there is 
a compact subset $\mathbf{K}\subset M$ such that $M-\mathbf{K}$ consists of finitely many components, each of which 
is diffeomorphic to the complement of a closed ball $\mathbf{D}^n \subset \RR^n$,  in  a manner such 
that $g$ becomes the standard Euclidean metric plus terms that fall off sufficiently rapidly at infinity. 
More generally, a Riemannian $n$-manifold $(M,g)$ is said to be {\em asymptotically locally Euclidean} (or {\em ALE}\,) 
if the complement of a compact set $\mathbf{K}$ consists of finitely many components, each of which 
is diffeomorphic to  a quotient $(\RR^n-\mathbf{D}^n)/\Gamma_j$, where    $\Gamma_j\subset {\mathbf O}(n)$ is a finite subgroup 
which acts freely on the unit sphere,
in such a way that $g$ again becomes the Euclidean metric plus error terms that fall off sufficiently rapidly at infinity.
The components of $M-\mathbf{K}$ are called the {\em ends} of $M$;  their fundamental groups are 
the afore-mentioned groups $\Gamma_j$, which may in principle  be different
for different ends of the manifold. 

The {\em mass} of an ALE  Riemannian $n$-manifold is an invariant which assigns a real number
to each end. This concept originated in general relativity, where an asymptotically flat $3$-manifold
could be interpreted as 
representing a time-symmetric slice of some $4$-dimensional space-time, in which case 
this invariant becomes  the 
 so-called  ADM mass \cite{adm},  which reads off  the apparent mass of an isolated gravitational 
source  from the asymptotics of its gravitational field. Our conventions are chosen so that, 
at a given end,
the mass of an ALE manifold is given by 
$$
{\zap m}(M, g) := \lim_{\varrho\to \infty}  
 \frac{\mathbf{\eg} (\frac{n}{2})}{4(n-1)\pi^{n/2}} \int_{S_\varrho/\Gamma_j} \left[ g_{k\ell, k} -g_{kk,\ell}\right] \mathbf{n}^\ell d\mathfrak{a}_E
$$
where commas represent derivatives in the given asymptotic coordinates, 
summation over
repeated indices is implicit, 
$S_\varrho$ is the Euclidean coordinate sphere of radius $\varrho$,  
$d\mathfrak{a}_E$ is the $(n-1)$-dimensional volume form induced on this sphere by the Euclidean 
metric,  and $\vec{\mathbf{n}}$ is the outward-pointing Euclidean unit normal vector.
While our choice here of normalization factor is of course primarily a matter of convention, 
an explanation of this choice is provided in the Appendix. Perhaps the most controversial feature of
our definition is that we have
 specified  that the integral is to be taken over $S_\varrho/\Gamma_j$ rather than over $S_\varrho$, 
so that the mass, by our conventions, is $1/|\Gamma_j|$ times the value
one  might  otherwise  expect. 

Needless to say, this peculiar definition of the mass seems to  depend on the choice of asymptotic coordinates. Indeed, 
 without additional assumptions, the relevant limit might not even exist, or might be
coordinate dependent. However, 
 Bartnik \cite{bartnik} and Chru\'{s}ciel \cite{admchrus} independently discovered that the mass is finite and independent of the choice of
 asymptotic coordinates provided we impose weak fall-off conditions of the following type:
 \label{conditions}
 \begin{enumerate}[(i)]
 \item
 \label{eins}
  the scalar curvature $s$ of  the $C^2$ metric  $g$ belongs to $L^1$;  and
\item
\label{zwei}
 in  some asymptotic chart at each end of $M^n$, the components of the metric 
satisfy $g_{jk} - \delta_{jk} \in C^{1,\alpha}_{-\tau}$ for  some $\tau > (n-2)/2$  and   some $\alpha\in (0,1)$.
 \end{enumerate}
 Here  the weighted H\"older spaces  $C^{k,\alpha}_{-\tau}$  consist of $C^{k,\alpha}$ functions such 
 that 
 $$
 \left(\sum_{j = 0}^{k} |x|^{j} |\triangledown^j f(x)|\right) + |x|^{k+\alpha} [\triangledown^k f]_{C^{0,\alpha}(B_{|x|/10}(x))} = O(|x|^{-\tau}).
 $$
 This definition can naturally be extended to tensor fields, and the resulting $C^{k,\alpha}_{-\tau}$ spaces
then become Banach spaces when equipped with the obvious weighted analogs of the usual H\"older norms. 
 While Bartnik  actually does mention  these weighted H\"older spaces in passing \cite[Theorem 1.2 (v)]{bartnik},   the state of the literature at the time led
  him to  instead impose a  slightly stronger condition  in lieu of   (\ref{zwei}), by instead  requiring $g-\delta$ to 
 belong to the weighted Sobolev spaces $W^{2,q}_{-\tau}$ for some $q > n$ and some $\tau > (n-2)/2$. 
 Bartnik's condition implies (\ref{zwei}), and condition (\ref{zwei})
 in turn  implies 
that, for some $\varepsilon > 0$,  the metric $g$ satisfies the Chru\'{s}ciel-type fall-off condition 
$$
g_{jk} = \delta_{jk} + O (|x|^{1-\frac{n}{2}-\varepsilon}), \qquad g_{jk,\ell} = 
O (|x|^{-\frac{n}{2}-\varepsilon}) 
$$
in suitable coordinates;  and 
 this  Chru\'{s}ciel-type fall-off is actually all that is needed  for many of our key results. 
The central issue is really  the range of  fall-off rates $\tau$ that are to be allowed; as emphasized  by both Bartnik and Chru\'{s}ciel, 
allowing   slower rates of fall-off than indicated above would make the mass coordinate-dependent, and so essentially ill-defined. 
Our  definition of  an ALE manifold 
 will  therefore  {\em by default} include
conditions (\ref{eins}) and (\ref{zwei}), except where we clearly specify that a weaker assumption suffices for a given result. 
When $n=4$, some our proofs will also require analogous control of  an  extra derivative of the 
the metric, and so {\em by default} we will strengthen assumption (\ref{zwei}) in this special dimension to instead require 
that $g_{jk}-\delta_{jk} \in C^{2,\alpha}_{-\tau}$ for some $\tau > (n-2)/2=1$,
although we will  also  sometimes explicitly  weaken this assumption when it is not needed for a given result.

The coordinate-based definition of the mass makes it seem  both enigmatic and chimerical.  In this article, however, we will show  that the mass 
 has a completely transparent  meaning 
when the  ALE space in question is a {\em K\"ahler} manifold. 
Along the way, we will incidentally learn  that an ALE  K\"ahler 
 manifold   only has one end; thus, in the K\"ahler setting,  a  choice of end is not required  in order  to be able to discuss the mass
in the first place!

Rather than beginning with  general ALE K\"ahler manifolds, let us first highlight 
the setting that originally motivated our investigation:
the so-called scalar-flat case, 
where the scalar curvature is assumed to vanish identically. 
In this context, we will demonstrate  the following result:

\begin{main}\label{alpha}
The mass of an ALE scalar-flat K\"ahler manifold $(M,g,J)$ is a topological invariant,
determined entirely by the smooth manifold $M$, together with the first Chern class
$c_1= c_1(M,J)\in H^2(M)$ of the complex structure and the K\"ahler class 
$[\omega]\in H^2(M)$ of the metric.
\end{main}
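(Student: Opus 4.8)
The plan is to trade the (vanishing) curvature integral of a scalar-flat K\"ahler metric against a genuine topological pairing, and to recognize the resulting discrepancy as the mass. Write $n=2m$, let $\rho$ denote the Ricci form, and recall the two basic facts of K\"ahler geometry that drive everything: $\rho$ is a closed real $2$-form representing $2\pi c_1(M,J)$, and pointwise $\rho\wedge\frac{\omega^{m-1}}{(m-1)!}=\frac{s}{2}\,\frac{\omega^m}{m!}$. When $s\equiv 0$ this integrand vanishes identically, so $\int_{M_\varrho}\rho\wedge\frac{\omega^{m-1}}{(m-1)!}=0$ on every truncation $M_\varrho=\{r\le\varrho\}$; the theorem will follow by evaluating this same integral a second way, through the topology of $M$ and a boundary flux at infinity.

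First I would choose a representative $\Theta$ of $2\pi c_1$ that vanishes near infinity. This is possible because each end is homotopy equivalent to a spherical space form $S^{2m-1}/\Gamma$, whose real second cohomology vanishes for $m\ge 2$, so that $c_1$ restricts trivially to the ends. Writing $\rho=\Theta+d\sigma$ globally and applying Stokes' theorem, using $d\omega=0$, gives for every $\varrho$
\[
0=\int_{M_\varrho}\rho\wedge\frac{\omega^{m-1}}{(m-1)!}=\int_{M}\Theta\wedge\frac{\omega^{m-1}}{(m-1)!}+\int_{S_\varrho/\Gamma}\sigma\wedge\frac{\omega^{m-1}}{(m-1)!}.
\]
Here the primitive $\sigma$ can be taken near infinity to equal the explicit K\"ahler primitive $\gamma=-\tfrac12\,d^c f$, where $d^c:=i(\bar\partial-\partial)$ and $f=\log\det(g_{i\bar\jmath})$ is the log-volume-density in asymptotic holomorphic coordinates, since there $\rho=-i\partial\bar\partial f=d\gamma$; any remaining ambiguity in the primitive is closed near infinity and only shifts the topological term. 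The theorem thus reduces to two claims: (I) that $\mathcal I:=\int_M\Theta\wedge\frac{\omega^{m-1}}{(m-1)!}$ is a well-defined topological quantity depending only on $M$, $c_1$ and $[\omega]$; and (II) that the boundary flux converges, as $\varrho\to\infty$, to a fixed universal multiple of the mass ${\zap m}(M,g)$. Granting both, the displayed identity yields ${\zap m}(M,g)=-\mathcal I/\mathrm{const}$, which is manifestly topological.

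Claim (I) is the routine half. Since $\Theta$ has compact support and $\omega^{m-1}$ is closed, $\mathcal I$ is simply the pairing of the compactly supported class of $2\pi c_1$ with the de Rham class $[\omega]^{m-1}$; a standard Stokes argument, together with the ALE fall-off that kills the boundary terms, shows that $\mathcal I$ is unchanged under any admissible change of representatives. Interpreting this number through the natural compactification of the end by $\CP^{m-1}/\Gamma$ identifies it with the asserted intersection number of $c_1$ and $[\omega]^{m-1}$, depending only on the listed data.

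The crux is claim (II), and this is where I expect the analytic work and the role of the weak fall-off hypotheses to concentrate. The naive bound $\gamma=O(r^{-\tau-1})$ yields a flux of size $O(\varrho^{\,n-2-\tau})$, which need not even converge when $\tau>(n-2)/2$, so convergence must come from cancellation rather than decay. The key lemma to prove is a pointwise K\"ahler identity: in asymptotic holomorphic coordinates, $\gamma\wedge\frac{\omega^{m-1}}{(m-1)!}$ restricted to $S_\varrho$ equals a universal constant times the ADM integrand $\big[g_{k\ell,k}-g_{kk,\ell}\big]\mathbf{n}^\ell\,d\mathfrak{a}_E$, modulo an exact form on the sphere and an error that is integrable in the limit. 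The K\"ahler condition enters decisively here, since the symmetries $\partial_k g_{i\bar\jmath}=\partial_i g_{k\bar\jmath}$ force $d^c$ of the trace of $g-\delta$ to reproduce precisely the antisymmetric combination defining the mass. With this lemma in hand, the Bartnik--Chru\'{s}ciel theory guarantees under hypotheses (\ref{eins}) and (\ref{zwei}) that the ADM limit exists and is coordinate independent, so the flux converges to the desired multiple of ${\zap m}(M,g)$; handling the borderline error terms (and, in dimension $n=4$, the one extra derivative built into our default conventions) is the technical heart of the argument. The incidental fact that an ALE K\"ahler manifold has a single end then ensures that this one number is the entire invariant.
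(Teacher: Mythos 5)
Your overall strategy is the paper's: the Stokes computation in your display is precisely the one carried out in the proof of Theorem \ref{gist} (with $\psi=\rho-d(f\theta)$ playing the role of your $\Theta$), and your claim (I) is legitimized there by Lemma \ref{legit}, which shows $H^2_c(M)\to H^2(M)$ is an isomorphism, so that the compactly supported class of $\Theta$ -- and hence $\mathcal{I}$ -- depends only on $c_1$ and not on the choices made. The genuine gap is in your claim (II). Your ``key lemma'' is formulated \emph{in asymptotic holomorphic coordinates}: you need a chart at infinity in which $J$ is the standard complex structure of $\CC^m$ and, simultaneously, $g$ still has the ADM-compatible fall-off, since otherwise $f=\log\det(g_{i\bar\jmath})$ and the symmetry $\partial_k g_{i\bar\jmath}=\partial_i g_{k\bar\jmath}$ are unavailable and the comparison with the ADM integrand cannot even be set up. The existence of such a chart is not part of the ALE hypothesis, and proving it is the bulk of the paper's work in complex dimension $m\geq 3$: one must cap off the end by a hypersurface $\CP_{m-1}$ (Lemma \ref{ndn}, Newlander--Nirenberg for a low-regularity almost-complex structure), show that a neighborhood of it is biholomorphically standard (Lemma \ref{nostalgia}, via Kodaira deformation theory and the Schouten--Struik projective-flatness theorem), and then show, using Bartnik's weighted analysis of harmonic coordinates, that the resulting holomorphic chart can be chosen so that the metric retains its decay (Lemma \ref{standard}). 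None of this appears in your proposal, and it cannot be waved through as routine.

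Worse, in complex dimension $m=2$ -- which the theorem covers -- the coordinates you postulate simply do not exist in general: the complex structure of an ALE scalar-flat K\"ahler surface need not be standard at infinity (Proposition \ref{oscar} gives only $J=J_0+O(|x|^{-3})$, and Honda's examples show this is optimal). So your pointwise identity is not available in any chart there, and the paper has to replace it by the coordinate-robust Proposition \ref{keystep} and Theorem \ref{basso}, in which $J$ is only approximated by a constant $J_0$, the K\"ahler form acquires a nonzero $(0,2)$-part, and the discrepancy between the K\"ahler flux and the ADM integrand is shown to consist of a co-exact term (killed by Stokes' theorem on $S_\varrho/\Gamma$) plus $O(\varrho^{-2\varepsilon})$ errors. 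Finally, the one-end property you call ``incidental'' is itself a theorem (Propositions \ref{class2} and \ref{class1}, proved via compactification and the Hodge index theorem), and it is not optional: with several ends your Stokes identity would only determine the \emph{sum} of the fluxes over all the ends, not the mass at any one of them.
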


In fact, our proof actually provides an  {\em explicit} formula for the mass in terms of these data.
Revisiting familiar  examples, this in particular gives a pure-thought explanation of 
 the second author's observation \cite{lpa} 
that there are ALE scalar-flat K\"ahler surfaces\footnote{Throughout the article, we use the 
term {\em complex surface} to indicate 
a complex 
manifold of {\em complex} dimension $2$, and thus of real dimension $4$.} of negative mass. Rather 
more interestingly, though, a quick glance at  other known examples
immediately now  gives a negative answer\footnote{We would like to thank Ioana \c{S}uvaina for  
pointing out to us
that this answer was already  implicit in   results of  Rollin-Singer  \cite[\S 6.7]{rosimass}
regarding the toric case.} 
to a  question posed by Arezzo \cite{arezzomass} that naturally arose in connection with  
gluing  constructions for  cscK metrics:

\begin{main}\label{beta}
There are infinitely many topological types of ALE scalar-flat K\"ahler surfaces 
that have zero mass, but are not Ricci-flat. 
\end{main}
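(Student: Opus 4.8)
The plan is to run everything through the explicit formula underlying Theorem~\ref{alpha}. For a fixed underlying complex surface $(M,J)$ with first Chern class $c_1$, that formula expresses the mass ${\zap m}$ of an ALE scalar-flat K\"ahler metric as an affine-linear function of the K\"ahler class $[\omega]\in H^2(M)$, obtained by pairing $[\omega]$ against a fixed class built from $c_1$ and the intersection form of $M$. Once this is in hand, ``zero mass'' is a single linear condition cutting out a hyperplane, and Theorem~\ref{beta} reduces to two tasks: (a) produce infinitely many complex surfaces that carry scalar-flat K\"ahler ALE metrics for a whole open cone of K\"ahler classes, arranged so that this hyperplane meets the admissible cone; and (b) guarantee that the resulting metrics are not Ricci-flat.

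I would secure the non-Ricci-flat condition topologically, so as to avoid any case-by-case geometric check. The Ricci form $\rho$ of any K\"ahler metric represents $2\pi c_1$, so if $c_1\neq 0$ in $H^2(M;\RR)$ then $\rho$ cannot vanish identically, and the metric cannot be Ricci-flat. It therefore suffices to work with surfaces having $c_1\neq 0$. Concretely, I would take the minimal resolutions $M\to \CC^2/\Gamma$ of cyclic quotient singularities of type $\frac{1}{n}(1,q)$, with $\Gamma=\ZZ_n\subset U(2)$ \emph{not} conjugate into $SU(2)$ (equivalently $q\neq n-1$, i.e. the non-Gorenstein cases). These are precisely the resolutions with $c_1\neq 0$, so no metric on them is Ricci-flat; in particular, any scalar-flat K\"ahler ALE metric we build on them is automatically non-Ricci-flat.

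On each such resolution I would invoke the toric existence theory of Calderbank--Singer (whose mass behavior is consistent with the toric computations of Rollin--Singer noted above) to produce scalar-flat K\"ahler ALE metrics whose K\"ahler classes sweep out an open cone in $H^2(M)$. Coordinatizing this cone by the areas $a_1,\dots,a_r>0$ of the exceptional $\CP^1$'s in the Hirzebruch--Jung string, the mass becomes an explicit linear form ${\zap m}=\sum_i \lambda_i a_i$, with coefficients $\lambda_i$ computable from the self-intersection data of the chain together with $c_1$. If the $\lambda_i$ are not all of one sign, the equation $\sum_i \lambda_i a_i=0$ has solutions with every $a_i>0$, and each such solution furnishes a genuine zero-mass, non-Ricci-flat scalar-flat K\"ahler ALE metric in the interior of the admissible cone. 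Since distinct continued-fraction data for different pairs $(n,q)$ yield distinct intersection forms, these surfaces fall into infinitely many distinct topological types, which is exactly what Theorem~\ref{beta} asserts.

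The step I expect to be the main obstacle is precisely this sign analysis, coupled with the verification that the zero-mass class lies strictly inside the cone of classes actually realized by scalar-flat K\"ahler ALE metrics. The degenerate possibility to be excluded is that ${\zap m}$ is, up to scale, a \emph{positive} combination of the $a_i$, which would force ${\zap m}>0$ throughout the cone and so preclude any zero. I would rule this out by computing the $\lambda_i$ explicitly in the smallest nontrivial families -- for instance the two-curve chains, where the K\"ahler cone is two-dimensional and ${\zap m}=\lambda_1 a_1+\lambda_2 a_2$ -- exhibiting mixed signs directly there, and then propagating to an infinite family by letting $n\to\infty$.
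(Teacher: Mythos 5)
There is a genuine gap, and it occurs exactly at the step you flagged as the main obstacle: the sign analysis. For a \emph{minimal} resolution, every curve $E_j$ in the Hirzebruch--Jung string has self-intersection $\leq -2$, so adjunction gives $\int_{E_j} c_1 = E_j\cdot E_j + 2 \leq 0$, and moreover every entry of the inverse intersection matrix $Q^{-1}$ is non-positive. Feeding this into the mass formula of Theorem~\ref{onpar}, the coefficients $a_j$ defined by \eqref{caementium} are all $\geq 0$, so
$$
{\zap m} \;=\; -\frac{1}{3\pi}\sum_j a_j \int_{E_j}[\omega] \;\leq\; 0
$$
throughout the entire K\"ahler cone, with equality forcing all $a_j=0$, hence $c_1=0$, hence (by the $L^2$-harmonic representative argument of Lemma~\ref{legit}) Ricci-flatness. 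This is precisely Corollary~\ref{thespot} of the paper. In other words, the ``degenerate possibility'' you hoped to exclude is not degenerate at all: it holds for \emph{every} minimal resolution, and your standing assumption $c_1\neq 0$ (the non-Gorenstein cases $q\neq n-1$) makes the mass \emph{strictly negative} for every scalar-flat K\"ahler ALE metric on these surfaces. The Calderbank--Singer metrics, and their Lock--Viaclovsky generalizations, therefore never achieve zero mass unless they are Ricci-flat; the mixed signs $\lambda_i$ you need never occur, and no amount of letting $n\to\infty$ or examining two-curve chains will change this.

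The paper's proof avoids this trap by using \emph{non-minimal} surfaces, and indeed remarks that non-minimality ``appears to be essential.'' One takes $(M,J)$ to be the blow-up of the $\mathcal{O}(-\ell)$ line bundle over $\CP_1$, $\ell\geq 3$, at $\mathfrak{b}-1\geq 1$ distinct points on the zero section: the $(-1)$-curves created by blowing up satisfy $\int_{E_j}c_1 = +1$, contributing to the mass with the \emph{opposite} sign to the zero-section term $(2-\ell)\int_{\tilde F}\omega$, as in Corollary~\ref{gungho}. The existence side is then handled not by toric theory but by LeBrun's explicit hyperbolic-ansatz metrics from \cite{mcp2}, for which the areas of $\tilde F$ and of the exceptional curves are computable in closed form ($\pi$ and $2\pi/(e^{2\rad_j}-1)$ respectively, where $\rad_j$ are hyperbolic distances between the ansatz's centers); tuning the $\rad_j$ makes the mass change sign and in particular hit zero, while $c_1\neq 0$ guarantees non-Ricci-flatness and varying $\ell$ gives infinitely many topological types. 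If you wish to salvage your linear-algebra framework, it survives intact (it is Theorem~\ref{onpar}); what must be replaced is the class of surfaces, since zero mass requires at least one holomorphic curve with $\int_{E} c_1 > 0$, i.e.\ a $(-1)$- or $0$-curve, which minimal resolutions never contain.
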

\noindent
By contrast,   Corollary 
\ref{thespot} below,
which was pointed out to us by Cristiano Spotti,  gives a systematic explanation of why the mass actually  turns out to  be negative
for so many other concrete examples. 

We now come to the actual formula for the mass. Because $M$ is a 
smooth manifold, one can define the compactly supported  de Rham cohomology 
$H^k_c(M)$, as well as the usual de Rham cohomology. If $M$ is a complex manifold, 
it is in particular oriented, and Poincar\'e duality  therefore gives us  an isomorphism 
$H^2_c(M)\cong[H^{2m-2}(M)]^*$. On the other hand, there is a natural 
map $H^2_c(M)\to H^2(M)$ induced by the inclusion of compactly supported forms
into all differential forms, and in the ALE setting, this map is actually an {\em isomorphism}.
We may therefore 
define 
$$\clubsuit: H^2(M) \to H^2_c(M)$$
to be its inverse. Using this notation, we may now state our explicit formula for the mass:

\begin{main}\label{gamma}
 Any ALE K\"ahler manifold  $(M,g,J)$ of complex dimension $m$ has mass 
given by 
$${\zap m}(M,g) = - \frac{\langle \clubsuit (c_1) , [\omega ]^{m-1}\rangle}{(2m-1)\pi^{m-1}} +
\frac{(m-1)!}{4(2m-1)\pi^m} \int_M s_g d\mu_g$$
where $s_g$ and $d\mu_g$ are respectively  the scalar curvature and volume form of $g$, 
while  $c_1 =c_1(M,J)\in H^2 (M)$ is the first Chern class of the complex structure, 
$[\omega]\in H^2(M)$ is the
K\"ahler class of $g$, and $\langle~, ~\rangle$ is the duality pairing between $H^2_c(M)$ and 
$H^{2m-2}(M)$.
\end{main}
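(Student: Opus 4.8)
The plan is to reduce the coordinate boundary integral defining ${\zap m}$ to a Chern--Weil computation governed by the Ricci form. Write $\rho$ for the Ricci form of $(M,g,J)$, so that $[\rho/2\pi]=c_1$ in $H^2(M)$ and, by the K\"ahler trace identity, $\rho\wedge\omega^{m-1}=\tfrac{(m-1)!}{2}\,s_g\,d\mu_g$; in particular $\rho\wedge\omega^{m-1}\in L^1$ by hypothesis (\ref{eins}). First I would choose a compactly supported $2$-form $\beta$ representing $\clubsuit(c_1)\in H^2_c(M)$. Since $\beta$ and $\rho/2\pi$ represent the same ordinary class, there is a global $1$-form $\psi$ with $\rho/2\pi-\beta=d\psi$. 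Pairing with $[\omega]^{m-1}$, using that $\beta$ is compactly supported and that $d\omega=0$, gives the exact identity
$$\langle \clubsuit(c_1),[\omega]^{m-1}\rangle = \int_M \beta\wedge\omega^{m-1} = \frac{(m-1)!}{4\pi}\int_M s_g\,d\mu_g - \int_M d\!\left(\psi\wedge\omega^{m-1}\right),$$
whose first term already reproduces the scalar-curvature contribution in the asserted formula.

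Next I would convert the remaining exact term into a boundary integral at infinity. Recalling that an ALE K\"ahler manifold has a single end (so that $\Gamma=\Gamma_j$ and no choice of end intervenes), Stokes' theorem applied to the region interior to $S_\varrho/\Gamma$ yields
$$\int_M d\!\left(\psi\wedge\omega^{m-1}\right)=\lim_{\varrho\to\infty}\int_{S_\varrho/\Gamma}\psi\wedge\omega^{m-1}.$$
Near infinity $\beta\equiv 0$, so there $\psi$ is a primitive of $\rho/2\pi$; writing $\rho=d\eta$ with the canonical real primitive $\eta=-\tfrac12\,d^{c}\log\det(g_{i\bar j})$ (normalized so that $d\eta=\rho$), I may take $\psi=\tfrac{1}{2\pi}\eta$ up to a closed $1$-form, whose contribution to the boundary limit I would show vanishes under the given decay. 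Thus everything is reduced to the asymptotics of $\tfrac{1}{2\pi}\eta\wedge\omega^{m-1}$ on $S_\varrho/\Gamma$.

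The crux is then the purely asymptotic identity that $\tfrac{1}{2\pi}\eta\wedge\omega^{m-1}$ agrees, in the $\varrho\to\infty$ limit, with the ADM integrand $\left[g_{k\ell,k}-g_{kk,\ell}\right]\mathbf{n}^\ell\,d\mathfrak{a}_E$ up to the universal normalization, giving $\lim_{\varrho\to\infty}\int_{S_\varrho/\Gamma}\psi\wedge\omega^{m-1}=(2m-1)\pi^{m-1}\,{\zap m}(M,g)$. Both integrands are built from first derivatives of $g$, so I would expand $d^{c}\log\det(g_{i\bar j})=g^{i\bar j}\,d^{c}g_{i\bar j}+O(|g-\delta|\,|\partial g|)$ in the asymptotic chart, convert the $J$-twisted derivatives and the wedge with $\omega^{m-1}$ into real-coordinate expressions using the compatibility of $J$ with $g$, and match the resulting combination of first derivatives against $g_{k\ell,k}-g_{kk,\ell}$. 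The essential point here is that the K\"ahler condition $d\omega=0$ is exactly what converts the gradient-of-trace terms in $d^{c}\log\det g$ into the divergence terms, producing the precise ADM combination; any residual discrepancy is a divergence tangent to $S_\varrho/\Gamma$ and hence integrates to zero. Substituting the two displayed identities into the cohomological identity and solving for ${\zap m}$ then produces Theorem~C.

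The main obstacle is this last asymptotic matching under the weak fall-off assumption (\ref{zwei}), $g-\delta\in C^{1,\alpha}_{-\tau}$ with $\tau>(n-2)/2=m-1$. Because $\tau$ may be this small, I must separate linear from nonlinear contributions to $\eta$. The nonlinear terms are $O(|x|^{-2\tau-1})$ and integrate over $S_\varrho$ (of area $\sim\varrho^{2m-1}$) to $O(\varrho^{\,2m-2-2\tau})=o(1)$, which vanishes precisely because $2\tau>2m-2$; I would have to verify that every such term is genuinely of this borderline-admissible order. The linear term is the one matched to the ADM integrand, whose limit exists and is coordinate-independent by the Bartnik--Chru\'{s}ciel theorem, the $L^1$ control of $s_g$ being what tames the slowly decaying harmonic modes. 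Finally, to dispose of the ambiguity in the asymptotic chart and in the choice of primitive $\psi$, I would again invoke Bartnik--Chru\'{s}ciel to reduce the whole computation to a single convenient coordinate system.
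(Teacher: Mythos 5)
Your outline reproduces, step for step, the skeleton of the paper's own proof (Theorem \ref{gist} for $m\geq 3$, Theorem \ref{aupair} for $m=2$): a compactly supported representative of $\clubsuit(c_1)$, a Stokes-theorem identity converting $\langle\clubsuit(c_1),[\omega]^{m-1}\rangle$ into $\frac{(m-1)!}{4\pi}\int_M s\,d\mu$ minus a boundary term at infinity, and then an asymptotic identification of that boundary term with the ADM integral. The difficulty is that the two inputs you treat as available are exactly where the paper's real work lies. The first is the single-end property, which you simply ``recall'': this is not a known fact but a theorem of this very paper (Propositions \ref{class2} and \ref{class1}), whose proof requires compactifying each end by a $\CP_{m-1}/\Gamma$ (Lemmas \ref{ndn}, \ref{nostalgia}, \ref{compactify}, resp.\ Lemma \ref{ado}) and then ruling out multiple ends via the Hodge index theorem on the resulting K\"ahler compactification. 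Without it, your Stokes identity only computes a \emph{sum} of boundary contributions over all ends, and the stated formula for ``the'' mass does not follow.

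The second, more serious gap is in your ``crux'' step. Writing the Ricci potential as $\eta=-\tfrac12 d^{c}\log\det(g_{i\bar j})$ \emph{in the asymptotic chart} tacitly assumes that there exist $J$-holomorphic coordinates near infinity, asymptotic to the given real coordinates and with compatible fall-off, in which $\det(g_{i\bar j})$ can even be formed. For $m\geq 3$ this is true but is itself the deep part of the paper (Lemmas \ref{ndn}, \ref{nostalgia}, \ref{standard}, resting on Kodaira deformation theory, the Schouten--Struik projective-flatness theorem, and Bartnik's harmonic-coordinate analysis). For $m=2$ it is \emph{false} in general: the complex structure of an ALE K\"ahler surface need not be standard at infinity (Gibbons--Hawking, Honda), so there is no chart in which your expansion of $d^{c}\log\det(g_{i\bar j})$ takes place, and the theorem as stated includes this case. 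The paper circumvents this with Proposition \ref{keystep}, working with a constant-coefficient $J_0$ that only approximates $J$; the deviation tensor $\mathscr{K}$ then produces extra first-order terms (the $1$-forms $\gimel$ and $\gamma$) that are a priori of the \emph{same size} as the ADM integrand, and must be shown either to combine into it or to integrate away---the co-exactness of $\gamma$ and the improved decay \eqref{jozu}, both consequences of the K\"ahler/torsion-free structure, are what make this work. Your assertion that ``any residual discrepancy is a divergence tangent to $S_\varrho/\Gamma$ and hence integrates to zero'' is precisely this claim, and it is the part of the argument that needs, and in the paper receives, a long and delicate proof rather than a remark.
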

\noindent 
Here we remind the reader  that the Bartnik-Chru\'{s}ciel  fall-off condition (\ref{eins})  requires the scalar curvature $s$ to be integrable. 
It is remarkable that this feature\footnote{As explained to us by 
Gustav 
Holzegel,  the intimate relationship between mass and scalar curvature 
apparently first came to light 
in the work of
Brill \cite{brill} on  stationary axisymmmetric space-times.} plays a direct role in our setting, by ensuring that 
the second term on the right-hand side  is well-defined.  
It is worth noting that our discussion  will not simply  
rely on the Bartnik-Chru\'{s}ciel theorem on the coordinate-invariance of the mass, but rather 
will actually provide  an independent verification of it in the K\"ahler setting. 

The reader may find it illuminating to compare Theorem \ref{gamma} with the more familiar compact case.
If $(M^{2m},g,J)$ is a {\em compact} K\"ahler manifold of complex dimension $m$, then its total scalar curvature is well known to be   topologically
 determined \cite{bes,calabix}  by the first Chern class of the complex structure   and the K\"ahler class of the metric via the Gauss-Bonnet-type formula 
$$\int_M s~d\mu= \frac{4\pi}{(m-1)!} \langle c_1, [\omega ]^{m-1}\rangle .$$ 
The gist of Theorem \ref{gamma} is that the mass  measures the degree to which this formula
fails  in the ALE case:
$$\frac{4\pi^m(2m-1)}{(m-1)!}{\zap m}(M,g) =   \int_M s~d\mu- \frac{4\pi}{(m-1)!} \langle \clubsuit c_1, [\omega ]^{m-1}\rangle.$$
In other words, the mass may be understood as  an {\em anomaly} in the formula for the total scalar curvature, encapsulating an 
essential difference between the ALE and compact cases. 

Of course, the formula in Theorem \ref{gamma} simplifies when 
 $g$ is scalar-flat; the integral on the right  drops out, and the mass is then 
expressed  purely in terms of topological data.  Theorem \ref{alpha} is thus an immediate corollary. 
Theorem \ref{beta}  is then proved  by applying this  formula to some 
ALE scalar-flat K\"ahler surfaces  constructed  by the second author in \cite{mcp2}. 

As we've already  noted, there are ALE manifolds of non-negative scalar curvature
which nonetheless have negative mass. However, one expects  this to  never happen for
AE (asymptotically Euclidean) manifolds. Indeed, this is  actually  a theorem \cite{lp,syaction,witmass}
if one is willing to further assume that the manifold is either low-dimensional or spin. Here we can 
add something new to the discussion, by demonstrating  that the conjecture
also  always holds in the K\"ahler case, even if the manifold is  high-dimensional and
non-spin:

\begin{main}[Positive Mass Theorem for K\"ahler Manifolds]
\label{delta}
Any asymptotically Euclidean (AE) K\"ahler manifold with non-negative scalar curvature
has non-negative mass:
$$s\geq 0 \quad \Longrightarrow \quad {\zap m}(M,g) \geq 0.$$
Moreover, ${\zap m}(M,g) = 0$ in this context  iff $(M,g)$ is Euclidean space.
\end{main}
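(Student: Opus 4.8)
The plan is to deduce everything from the explicit mass formula of Theorem \ref{gamma}. Since an AE manifold has trivial group $\Gamma$ at infinity, the single end is genuinely asymptotic to flat $\CC^m$, and Theorem \ref{gamma} reads
$${\zap m}(M,g) = - \frac{\langle \clubsuit (c_1) , [\omega ]^{m-1}\rangle}{(2m-1)\pi^{m-1}} + \frac{(m-1)!}{4(2m-1)\pi^m} \int_M s\, d\mu .$$
All the numerical constants here are positive, and $\int_M s\, d\mu$ is finite by the fall-off condition (\ref{eins}); so when $s\geq 0$ the second term is manifestly non-negative. The entire theorem therefore reduces to the purely complex-geometric assertion that $\langle \clubsuit (c_1), [\omega]^{m-1}\rangle \leq 0$, equivalently $\langle \clubsuit (c_1(K_M)), [\omega]^{m-1}\rangle \geq 0$, together with an analysis of the equality case.

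To produce this sign I would exploit the fact that, because the end is asymptotic to $(\CC^m, J_0)$, the standard holomorphic volume form $\Theta_0 = dz^1\wedge\cdots\wedge dz^m$ is defined and nowhere zero near infinity. I would first correct it to an honest $J$-holomorphic section of $K_M$ on the end by solving a $\bar\partial$-equation in weighted spaces, the error $\bar\partial_J\Theta_0$ being small since $J\to J_0$ at the rate dictated by condition (\ref{zwei}). The crucial step is then to show that this section extends across the compact interior of $M$ as a global \emph{holomorphic} (not merely meromorphic) section $\Theta$ of $K_M$, so that its divisor $D=\operatorname{div}(\Theta)$ is \emph{effective} and, being a unit at infinity, compact. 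Granting this, $\clubsuit(c_1(K_M))$ is the compactly supported Poincar\'e dual of $[D]$, whence
$$\langle \clubsuit(c_1(K_M)), [\omega]^{m-1}\rangle = \int_D \omega^{m-1} = (m-1)!\,{\Vol}_\omega(D)\geq 0,$$
which is exactly the required inequality; as a sanity check, this correctly predicts that the Burns metric on the blow-up of $\CC^2$ has strictly positive mass. Establishing the holomorphic extension --- equivalently, ruling out poles at infinity and controlling the global complex geometry, presumably via a compactification of $M$ as a compact complex orbifold --- is the step I expect to be the main obstacle, and it is where the structure theory of AE K\"ahler manifolds must be brought to bear.

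Finally I would treat the rigidity statement. If ${\zap m}(M,g)=0$ then, both terms being non-negative, each must vanish: $\int_M s\, d\mu = 0$ forces $s\equiv 0$ since $s\geq 0$, while $\int_D\omega^{m-1}=0$ forces $D=\emptyset$, so that $\Theta$ is nowhere vanishing and $K_M$ is holomorphically trivial. The Ricci form may then be written $\rho = i\partial\bar\partial f$ with $f=\log\|\Theta\|^2_\omega$, and scalar-flatness $s\equiv 0$ says precisely that $f$ is harmonic. Since $\|\Theta\|_\omega\to\text{const}$ at infinity (as $\Theta\to\Theta_0$ and $\omega\to\omega_0$), the function $f$ tends to a constant at infinity, and the maximum principle on the complete manifold $M$ then forces $f$ to be globally constant. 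Hence $\rho\equiv 0$, i.e.\ $(M,g,J)$ is Ricci-flat K\"ahler. In particular ${\rm Ric}\geq 0$, so Bishop--Gromov monotonicity applies; but the AE hypothesis makes the asymptotic volume ratio equal to that of $\RR^{2m}$, and the rigidity case of Bishop--Gromov then forces $(M,g)$ to be flat Euclidean space $\CC^m=\RR^{2m}$. This completes the plan.
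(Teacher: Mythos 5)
Your overall strategy coincides with the paper's: specialize the mass formula of Theorem \ref{gamma} to the AE case, show that $-\langle \clubsuit (c_1) , [\omega ]^{m-1}\rangle$ equals $(m-1)!$ times the $\omega$-volume of an effective \emph{compact} canonical divisor, and settle rigidity via Ricci-flatness plus Bishop--Gromov. Your equality-case analysis is correct and is a mild variant of the paper's: where you trivialize $K_M$ by the nowhere-zero section $\Theta$ and run a maximum-principle argument on $f=\log\|\Theta\|^2_\omega$, the paper instead observes that for a scalar-flat K\"ahler metric the Ricci form is an $L^2$ harmonic representative of the class $2\pi c_1=0$, hence vanishes by the uniqueness statement in Lemma \ref{legit}; both routes then conclude with Bishop--Gromov exactly as you say.

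The genuine gap is the step you yourself flag as the main obstacle: producing a \emph{global} holomorphic section of $K_M$ that is nonvanishing near infinity. Correcting $\Theta_0$ on the end by a $\bar{\partial}$-equation and then ``extending across the compact interior'' is not a routine matter: Hartogs-type extension applies to holomorphic functions, not to sections of a line bundle which (whenever $M\neq\CC^m$) is nontrivial precisely because of what happens in the core, and the existence of such a section with effective divisor is essentially equivalent to the inequality $\langle \clubsuit(c_1(K_M)),[\omega]^{m-1}\rangle\geq 0$ that you are trying to prove --- an inequality that genuinely requires the AE hypothesis, since it fails for ALE scalar-flat K\"ahler surfaces (this is exactly why the $\mathcal{O}(-\ell)$ examples with $\ell\geq 3$ in Corollary \ref{generic} have negative mass). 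The paper supplies the missing ingredient \emph{before} this theorem: Proposition \ref{ae2} (for $m\geq 3$) and Proposition \ref{ae1} (for $m=2$) produce a proper, degree-one holomorphic map $F:M\to\CC^m$ that is a biholomorphism outside a compact set. One then simply sets $\Upsilon=F^*(dz^1\wedge\cdots\wedge dz^m)$: this is automatically a global holomorphic section of $K_M$, its zero divisor $D=\sum_j n_jD_j$ (the critical locus of $F$, with multiplicities) is effective and compact, and $[D]$ is Poincar\'e dual to $\clubsuit(c_1(K_M))=-\clubsuit(c_1)$. That yields the Penrose inequality of Theorem \ref{roger}, from which the positive mass theorem follows just as you outline. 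So your plan is completed not by the $\bar{\partial}$-plus-extension construction you sketch, but by citing Propositions \ref{ae2} and \ref{ae1}, which carry the real weight via the compactification and classification results established earlier in the paper.
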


Our proof of this version of the positive mass theorem uses nothing but our mass formula and
some complex manifold theory. Indeed, the argument actually tells us a great deal more; it
in fact shows that the mass can be bounded from below by the $(2m-2)$-volume
of a subvariety. This is reminiscent of  the Penrose inequality \cite{braymass,huilmpen,penineq}, 
which gives a sharp lower bound for  the mass of an AE $3$-manifold in terms
of the area of a minimal surface. Our K\"ahler analog goes as  follows:

\begin{main}[Penrose Inequality for K\"ahler Manifolds]
\label{epsilon}
Let $(M^{2m},g,J)$ be an AE K\"ahler manifold with   scalar curvature $s \geq 0$.
Then $(M,J)$ carries a  canonical divisor $D$ that is expressed as a sum $\sum n_jD_j$
of compact complex hypersurfaces with  positive integer coefficients, with the property that
 $\bigcup_j D_j \neq \varnothing$  
whenever  $(M,J)\neq \CC^m$. 
In  terms of this divisor, we then have  
$${\zap m}(M,g) \geq  \frac{(m-1)!}{(2m-1)\pi^{m-1}} \sum_j  n_j \mbox{Vol}\, (D_j) 
$$
and equality holds if and only if $(M,g,J)$  is scalar-flat K\"ahler. 
\end{main}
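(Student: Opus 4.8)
The plan is to read the Penrose inequality directly off the mass formula of Theorem \ref{gamma}, the only substantive analytic input being the construction of an effective canonical divisor with the correct asymptotics. Since the hypothesis $s\geq 0$ makes the scalar-curvature integral in Theorem \ref{gamma} nonnegative, I would begin by writing
$${\zap m}(M,g) = \frac{-\langle \clubsuit (c_1),[\omega]^{m-1}\rangle}{(2m-1)\pi^{m-1}} + \frac{(m-1)!}{4(2m-1)\pi^m}\int_M s_g\,d\mu_g \geq \frac{-\langle \clubsuit (c_1),[\omega]^{m-1}\rangle}{(2m-1)\pi^{m-1}},$$
so that the entire theorem reduces to identifying the topological term as $-\langle \clubsuit (c_1),[\omega]^{m-1}\rangle = (m-1)!\sum_j n_j\mbox{Vol}(D_j)$ for a suitable compact effective canonical divisor $D=\sum_j n_j D_j$, together with a discussion of the equality case.

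The next step is to produce the divisor $D$. Because $(M,g,J)$ is AE, its single end is asymptotic to $\CC^m$, and the $(m,0)$-part $\Theta$ (with respect to $J$) of the Euclidean holomorphic volume form $dz^1\wedge\cdots\wedge dz^m$ is nonvanishing near infinity and asymptotic to it; thus $\Theta$ trivializes the canonical bundle $K$ over the end. I would then correct $\Theta$ to a genuine holomorphic section: since $J$ converges to the standard structure, $\bar\partial\Theta$ decays, and solving $\bar\partial\eta=\bar\partial\Theta$ with $\eta\to 0$ at infinity yields a holomorphic $(m,0)$-form $\sigma=\Theta-\eta$ that is still asymptotic to the Euclidean form, hence nonvanishing outside a compact set. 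Its zero divisor $D=\sum_j n_j D_j$ is therefore compact and effective with positive integer multiplicities $n_j$, and it is a canonical divisor, so $[D]=c_1(K)=-c_1$. Finally, if $\sigma$ were nowhere zero it would trivialize $K$ compatibly with the Euclidean structure at infinity, which by the rigidity established earlier forces $(M,J)\cong\CC^m$; hence $\bigcup_j D_j\neq\varnothing$ whenever $(M,J)\neq\CC^m$.

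With $D$ in hand, the identification is formal. Because $\sigma$ trivializes $K$ at infinity, $\clubsuit([D])=\clubsuit(-c_1)$ is precisely the compactly supported Poincaré dual of $D$, so that $-\langle\clubsuit(c_1),[\omega]^{m-1}\rangle=\langle\clubsuit([D]),[\omega]^{m-1}\rangle=\int_D\omega^{m-1}=\sum_j n_j\int_{D_j}\omega^{m-1}$. By Wirtinger's identity $\int_{D_j}\omega^{m-1}=(m-1)!\,\mbox{Vol}(D_j)$, which turns the reduction above into exactly the claimed inequality. For the equality case, observe that the difference between ${\zap m}(M,g)$ and the stated lower bound is exactly $\tfrac{(m-1)!}{4(2m-1)\pi^m}\int_M s_g\,d\mu_g$, which is nonnegative and vanishes if and only if $s\equiv 0$; thus equality holds precisely when $(M,g,J)$ is scalar-flat K\"ahler.

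The main obstacle is the construction in the second paragraph: producing a global holomorphic $m$-form with prescribed Euclidean asymptotics is a weighted $\bar\partial$-problem on a noncompact manifold, where one must control both solvability and decay of $\eta$ in the weighted H\"older scale dictated by the fall-off conditions, and must rule out cohomological obstructions; the accompanying rigidity statement (trivial canonical bundle compatible with the flat structure at infinity implies $\CC^m$) is the other delicate point. Everything else—the Poincaré-duality pairing, the Wirtinger computation, and the equality analysis—is then routine given Theorem \ref{gamma}.
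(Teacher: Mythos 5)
Your first, third, and fourth paragraphs reproduce the paper's argument exactly: the inequality and the equality discussion are read off from Theorem \ref{gamma} using $s\geq 0$, and the topological term is converted into $(m-1)!\sum_j n_j \Vol(D_j)$ via the Wirtinger identity, just as in the paper. The gap is your second paragraph, and it is not a side issue, because constructing the canonical divisor is the entire substance of the theorem. You assert, without proof, (a) that the weighted $\bar{\partial}$-problem $\bar{\partial}\eta=\bar{\partial}\Theta$ is solvable with decay, and (b) a rigidity statement (``a nowhere-zero holomorphic $m$-form asymptotic to the Euclidean one forces $(M,J)\cong\CC^m$'') attributed to ``the rigidity established earlier'' --- but no such statement is established anywhere in the paper. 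Neither claim is routine. For (a): when $(M,J)\neq \CC^m$ the manifold contains the very compact hypersurfaces $D_j$ you are trying to produce, so $M$ is not Stein and H\"ormander-type $L^2$ methods with a strictly plurisubharmonic weight are unavailable; moreover the relevant forms are $(m,1)$-forms with values in the trivial bundle, for which the curvature term in the Bochner--Kodaira--Nakano identity vanishes identically, so there is no positivity to exploit. The natural obstruction space is $H^1$ of $K_X\otimes\mathcal{O}((m+1)\Sigma)$ on the compactification $X$, and killing it (say by Kawamata--Viehweg, using that the divisor $\Sigma$ at infinity is nef and big) already requires the structure theory you are trying to bypass. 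For (b): the statement is true, but every proof I can see of it runs through that same structure theory, so it cannot be cited as already available.

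The paper obtains both missing ingredients at once from Proposition \ref{ae2} (Proposition \ref{ae1} when $m=2$): an AE K\"ahler manifold admits a proper holomorphic map $F:M\to\CC^m$ of degree one which is a biholomorphism outside a compact set. Setting $\Upsilon=F^*(dz^1\wedge\cdots\wedge dz^m)$ gives a genuinely holomorphic section of $K_M$ --- no correction term and no $\bar{\partial}$-analysis --- whose zero locus is exactly the critical set of $F$, hence compact; and the nonemptiness statement becomes immediate: if $\bigcup_j D_j=\varnothing$ then $F$ has no critical points, and a proper degree-one holomorphic local biholomorphism is a global biholomorphism, so $(M,J)=\CC^m$. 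The honest repair of your proposal is simply to replace the $\bar{\partial}$-construction by this pullback; everything else you wrote then goes through verbatim.
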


Much of  the intrinsic interest of our subject arises from the case of  real dimension $4$,
where a plethora of known examples leads to  a wealth of 
  applications, including Theorem \ref{beta}. However, 
the complex-surface case 
entails   technical subtleties 
that simply disappear  in higher dimensions.
Our presentation therefore begins  
with proofs of Theorems \ref{alpha} and \ref{gamma} in complex dimension $m \geq 3$. 
Using this high-dimensional case as our guide, but now emphasizing  the coordinate-invariant nature  of the mass, 
we then develop  a second, more robust proof of the  asymptotic form of our  mass formula, 
in a manner that also shows that this formula remains valid 
in complex dimension $2$. We then prove some global results regarding ALE K\"ahler surfaces, 
culminating in a proof of the $m=2$ case of Theorem \ref{gamma}, 
along with various applications, including Theorem \ref{beta}. 
We then conclude by showing that  Theorems \ref{delta} and \ref{epsilon}
are straightforward corollaries of  our other results.

\section{The High-Dimensional Case} \label{hiyo}

We begin by proving Theorems \ref{alpha} and \ref{gamma}  when the complex dimension is $m\geq 3$. 
Our high-dimensional proofs will  prefigure 
many  of the  ideas needed for the complex-surface ($m=2$) case, but  manage to avoid a number of difficult  technical complications. 
 Our journey begins with the following step:

\begin{lem}\label{ndn}
Let ${M}_\infty$  be an end of an ALE K\"ahler manifold $(M^{2m},g,J)$, $m\geq3$, 
and let $(x^1, \ldots , x^{2m})$ be a real asymptotic coordinate system on the universal cover 
 $\widetilde{M}_\infty$ of $M_\infty$ in  which $g$ satisfies the weak fall-off hypothesis 
 $$g_{jk}= \delta_{jk} + O(|x|^{1-m-\varepsilon}), \qquad g_{jk,\ell}= O(|x|^{-m-\varepsilon})$$
 for some $\varepsilon > 0$. 
 Then there is a (non-compact) complex $m$-manifold $\mathscr{X}$
 containing an embedded complex hypersurface $\Sigma\cong \CP_{m-1}$ with  normal bundle of degree $+1$, such that 
 $\widetilde{M}_\infty$  is biholomorphic to $\mathscr{X}-\Sigma$. 
\end{lem}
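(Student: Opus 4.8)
The plan is to exploit the fact that the complex structure $J$ on the end must be asymptotic to a fixed flat one, to manufacture holomorphic functions near infinity that differ from the standard coordinates by a decaying error, and then to graft the familiar compactification $\CC^m\subset\CP_m$ onto the end.

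First I would record the topology and normalize $J$. Since $\Gamma_j\subset\mathbf{O}(2m)$ acts freely on $S^{2m-1}$, the universal cover $\widetilde M_\infty$ is diffeomorphic to $\RR^{2m}-\mathbf{D}^{2m}\cong\CC^m-\mathbf{D}^{2m}$, and in particular is simply connected, so the lifted Euclidean coordinates define global functions $z^j=x^{2j-1}+\sqrt{-1}\,x^{2j}$ on $\widetilde M_\infty$. Because $g$ is K\"ahler, $\nabla^g J=0$; the fall-off hypothesis forces the Christoffel symbols to be $O(|x|^{-m-\varepsilon})$, so the parallel condition reads $\partial J=-[\Gamma,J]$ and gives $\partial J=O(|x|^{-m-\varepsilon})$, which is radially integrable for $m\geq 2$. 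Hence $J$ converges at infinity to a constant $g$-orthogonal complex structure $J_\infty$ with $J-J_\infty=O(|x|^{1-m-\varepsilon})$, and after a constant linear change of the real coordinates (which preserves the fall-off) I may assume $J_\infty=J_0$, the standard structure for which the $z^j$ are holomorphic.

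Next comes the analytic heart of the matter: I would seek $J$-holomorphic functions $w^j=z^j+u^j$ on a neighborhood of infinity with $u^j$ decaying. Writing $\bar\partial_J=\bar\partial_{J_0}+B\circ d$, where $B$ is the zeroth-order bundle map measuring the failure of the $J$-Dolbeault projector to agree with the flat one, the hypothesis gives $B=O(|x|^{1-m-\varepsilon})$; since $\bar\partial_{J_0}z^j=0$, the $(0,1)$-forms $f^j:=-\bar\partial_J z^j=-B\,dz^j$ have size $O(|x|^{1-m-\varepsilon})$, and the equation $\bar\partial_J u^j=f^j$ becomes $\bar\partial_{J_0}u^j=f^j-B\,du^j$, a small perturbation of the flat $\bar\partial$-equation. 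Cutting $f^j$ off near the inner boundary (which only destroys holomorphicity on the discarded compact piece) and using a weighted right inverse for $\bar\partial_{J_0}$ on $\CC^m$ together with a contraction-mapping argument on weighted H\"older spaces, I expect to obtain $u^j=O(|x|^{2-m-\varepsilon})$, hence $u^j\to 0$ and $w^j\sim z^j$. I anticipate that this weighted solvability, and the bookkeeping needed to keep $w$ asymptotic to $z$ to sufficiently high order, will be the main obstacle. This is also where $m\geq 3$ is essential: it places the target weight $2-m-\varepsilon$ strictly below $-1$, in the range where the flat $\bar\partial$-operator admits a bounded, decaying right inverse with no cokernel; the borderline dimension $m=2$ obstructs exactly this step and is precisely what forces the separate treatment promised later in the paper.

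Finally I would assemble the compactification. Because $w^j=z^j+O(|x|^{2-m-\varepsilon})$ with derivatives decaying faster than those of $z^j$, the map $w=(w^1,\ldots,w^m)$ has differential close to that of the identity and is proper with $|w|\to\infty$, so a degree argument on large coordinate spheres shows that, for $R$ large, $w$ restricts to a biholomorphism of $\{|x|>R\}$ onto an open set $\Omega\subset\CC^m$ whose complement is bounded. Shrinking the end accordingly (ends are defined only modulo compact sets), I identify $\widetilde M_\infty$ with $\Omega$. Now I graft on the standard divisor at infinity: in $\CP_m=\CC^m\cup\Sigma_0$ the hyperplane $\Sigma_0\cong\CP_{m-1}$ has normal bundle $\mathcal{O}(1)$ of degree $+1$, and since $\CC^m-\Omega$ is compact the set $\mathscr{X}:=\Omega\cup\Sigma_0$ is an open complex submanifold of $\CP_m$ containing $\Sigma:=\Sigma_0$ with the required normal bundle. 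By construction $\mathscr{X}-\Sigma=\Omega\cong\widetilde M_\infty$, which is the assertion.
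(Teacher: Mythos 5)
Your proposal attempts to prove something strictly stronger than Lemma \ref{ndn}: you want $J$-holomorphic coordinates on the end asymptotic to the flat ones, i.e.\ a \emph{standard} end, whereas the lemma only asks for an abstract complex manifold $\mathscr{X}$ capping off the end. The analytic heart of your argument has a genuine gap. The operator $\bar{\partial}_{J_0}$ acting on functions is overdetermined: a right inverse $T$ with $\bar{\partial}_{J_0}Tg = g$ can exist only on $(0,1)$-forms $g$ satisfying the compatibility condition $\bar{\partial}_{J_0}g=0$, since applying $\bar{\partial}_{J_0}$ to both sides and using $\bar{\partial}_{J_0}^2=0$ forces $\bar{\partial}_{J_0}g=0$. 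Your data fail this condition at every stage: the closedness you actually have for $f^j=-\bar{\partial}_J z^j$ is with respect to $\bar{\partial}_J$, not $\bar{\partial}_{J_0}$; cutting $f^j$ off destroys even that; and the successive right-hand sides $f^j - B\,du^j$ in the iteration are closed for neither operator. So ``weighted right inverse plus contraction mapping'' cannot be run as stated. Repairing it means either setting up a Malgrange-type elliptic system or doing Hodge-theoretic surgery on the failure of closedness --- in other words, re-proving a Newlander--Nirenberg theorem with decay, which is the entire difficulty rather than a routine step.

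Moreover, your explanation of where $m\geq 3$ enters (the target weight $2-m-\varepsilon$ dropping below $-1$) is not the true dividing line. The obstruction to solving $\bar{\partial}u=f$ with decay on an exterior domain is cohomological: $H^{0,1}_{\bar{\partial}}(\CC^m-\mathbf{D}^{2m})$ vanishes for $m\geq 3$ by Andreotti--Grauert (this is exactly what the paper invokes later, in Lemma \ref{compactify}), but is infinite-dimensional when $m=2$. This is not bookkeeping: for $m=2$ the intermediate statement you aim at --- a standard end --- is actually \emph{false} for general ALE K\"ahler surfaces (the paper cites Honda's examples, and Proposition \ref{oscar} shows $J$ can only be made to osculate $J_0$ to finite order), so any proof along your lines must break at precisely this cohomological point. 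By contrast, the paper's proof of Lemma \ref{ndn} involves no global $\bar{\partial}$-problem at all: it extends $J$ across the hyperplane at infinity of $\CP_m$ by setting it equal to $J_0$ along $\Sigma$, verifies by an explicit computation in the inverted coordinates $(w_1,\ldots,w_m)=(1/z^1,z^2/z^1,\ldots,z^m/z^1)$ that the extended tensor is $C^1$ --- this local derivative estimate, giving fall-off $O(|w_1|^{m-3+\varepsilon})$ for the first derivatives, is the \emph{only} place $m\geq 3$ is used --- then concludes the Nijenhuis tensor vanishes by continuity and density, and applies the Hill--Taylor version of Newlander--Nirenberg on the compactification. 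The standardness of a neighborhood of $\Sigma$, which your scheme would deliver in one stroke, is deliberately postponed to the much harder Lemma \ref{nostalgia}, and the existence of holomorphic coordinates asymptotic to the metric ones to Lemma \ref{standard}, where it is obtained via Bartnik's harmonic-function analysis rather than a perturbative $\bar{\partial}$-argument.
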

\begin{proof}
We  first identify the range  $\RR^{2m} - \mathbf{D}^{2m}$ of our asymptotic coordinate system with $\CC^m - \mathbf{D}^{2m}$
 in a reasonably intelligent manner, 
 by choosing a constant-coefficient almost-complex structure $J_0$ on $\RR^{2m}$ such that $J\to J_0$ at infinity. 
 We can do this by identifying all the tangent spaces of $\RR^{2m}$ in the usual way, using the flat   Euclidean 
 connection $\triangledown$. 
  Since $\nabla J=0$, where $\nabla$ is the Levi-Civita connection of $g$, 
 and since $\nabla = \triangledown + \boldsymbol{\Gamma}$, where $\triangledown$ is the coordinate Euclidean connection and
  $\boldsymbol{\Gamma}= O(\varrho^{-m-\varepsilon})$, 
 the value of $J$ will approach a well-defined limit $J_0$ along some chosen radial ray,
 and we then extend this limit as a constant-coefficient tensor field on our asymptotic coordinate
 domain. Along the chosen ray, we then have $J-J_0 = O(\varrho^{1-m-\varepsilon})$, and integrating along
 great circles in spheres of constant radius  then shows that $J-J_0$ has  $O(\varrho^{1-m-\varepsilon})$
 fall-off everywhere. The same argument  similarly shows that the derivative of
 $J$ falls off at  the same rate  as the derivative of the metric $g$. 

Now think of $(\CC^m, J_0)$ as  an affine chart on $\CP_m$, whose complex structure we will
 also denote by $J_0$. Let $\Sigma \subset \CP_m$ be the hyperplane at infinity, and notice that 
  our asymptotic coordinates give us a diffeomorphism between $\widetilde{M}_\infty$ and $\mathscr{X}-\Sigma$,  
  where $\mathscr{X}\subset \CP_m$ is   some neighborhood 
 of this hyperplane. We may then define a ``rough'' almost complex structure
  $J$ on $\mathscr{X}$ by taking it to be the given $J$ on $\mathscr{X}-\Sigma$, and $J_0$ along $\Sigma$. This 
  $J$ is  then at least $C^1$  on $\mathscr{X}$. Indeed, if $(z^1, z^2,  \ldots , z^m)$ are the standard affine coordinates 
  on $\CC^m$, then, in the cone $|z^1| \geq \max \{   |z^j|~|~j> 1\}$,  we may inspect the behavior of $J$ near infinity
  by observing that there is a unique $(m,0)$-form with respect to $J$ given by  
  $$\varphi = (dz^1+ \varphi_1^{\bar{j}} d\bar{z}^j ) \wedge (dz^2+  \varphi_2^{\bar{j}} d\bar{z}^j) \wedge \cdots \wedge (dz^m+  \varphi_m^{\bar{j}} d\bar{z}^j),$$
  and that the functions $\varphi^{\bar{j}}_k$ then have the  same fall-off as $J$.   Setting 
  $$(w_1,w_2, \ldots, w_m ) = \left(\frac{1}{z^1}, \frac{z^2}{z^1}, \ldots , \frac{z^m}{z^1}\right),$$
   one  can then analogously determine the 
  components of $J$ from those of 
  $$\psi := -w_1^{m+1} \varphi  = (dw_1+ \psi_1^{\bar{j}} d\bar{w}_j ) \wedge (dw_2+  \psi_2^{\bar{j}} d\bar{w}_j) \wedge 
  \cdots \wedge (dw_m+  \psi_m^{\bar{j}} d\bar{w}_j).$$
  Reading off the coefficients $\varphi^{\bar{j}}_k$ and $\psi^{\bar{j}}_k$ 
  by inspecting the  type $(m-1,1)$ parts of   $\varphi$ and $\psi$ with respect to the background complex structure $J_0$, 
  one then sees that  the coefficients $\{ \psi^{\bar{j}}_k\}$  behave like the $\{ \varphi^{\bar{j}}_k\}$ times, at worst, 
  $O(|w_1|^{-1})$, while their  first derivatives   behave like those of the $\{ \varphi^{\bar{j}}_k\}$ times, 
  at worst, $O(|w_1|^{-3})$. Since $\varrho^{-1}= O(|w_1|)$ in the region in question, our fall-off conditions therefore guarantee
   that the almost-complex structure is at least 
  $C^1$. In particular, the Nijenhuis tensor of $J$ is continuous, and since it vanishes
 on the dense set $\mathscr{X}-\Sigma$, it vanishes identically. The 
 Hill-Taylor version \cite{hiltay} of Newlander-Nirenberg therefore guarantees
 the existence of  complex coordinates on $(\mathscr{X}, J)$. These will  at least have H\"older regularity $C^{1,\alpha}$
with respect to the original atlas, for any $\alpha\in (0,1)$. 
\end{proof}

In fact,  our fall-off conditions are noticeably   stronger than what is actually 
needed for  the proof of
this lemma. In any case, whenever we can add such 
a hypersurface at infinity, then, provided the 
 complex dimension is $m\geq 3$, the following result will  force the 
complex structure $J$  to  become  completely {\em standard} at infinity:

\begin{lem}\label{nostalgia}
Let $({\mathscr{X}},J)$ be a (possibly non-compact) 
complex $m$-manifold, $m\geq 3$, that contains an embedded hypersurface
$\Sigma\subset \mathscr{X}$ which  is biholomorphic to $\CP_{m-1}$ and 
has  normal bundle of degree $+1$. 
Then $\Sigma \subset \mathscr{X}$ has an open neighborhood $\mathscr{U}$   which is biholomorphic to
an open neighborhood of a hyperplane $\CP_{m-1}\subset \CP_m$. 
\end{lem}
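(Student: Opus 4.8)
Lemma \ref{nostalgia} — plan of proof.

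The plan is to construct an explicit biholomorphism onto a neighborhood of a hyperplane in $\CP_m$ by realizing $\Sigma$ as the common structure cut out by the section defining $\Sigma$ together with $m$ extended sections of the line bundle associated to $\Sigma$. First I would set $L = \mathcal{O}(\Sigma)$, the holomorphic line bundle on a neighborhood of $\Sigma$ determined by the smooth divisor $\Sigma$; it carries a canonical holomorphic section $s_0$ vanishing to first order exactly along $\Sigma$, and its restriction to $\Sigma$ is the normal bundle $N$, which by hypothesis is $\mathcal{O}(1)$ on $\CP_{m-1}$. The $m$-dimensional space $H^0(\CP_{m-1}, \mathcal{O}(1))$ of linear forms gives the standard embedding of $\Sigma = \CP_{m-1}$ into $\CP_{m-1}$; the goal is to promote these $m$ sections $z_1, \ldots, z_m$ of $L|_\Sigma$ to honest holomorphic sections $s_1, \ldots, s_m$ of $L$ on a genuine open neighborhood $\mathscr{U}$ of $\Sigma$.

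The extension would be carried out order by order along the infinitesimal neighborhoods $\Sigma^{(k)}$ defined by the powers $\mathcal{I}^{k+1}$ of the ideal sheaf of $\Sigma$. The obstruction to extending a section of $L$ from $\Sigma^{(k)}$ to $\Sigma^{(k+1)}$ lies in $H^1(\Sigma, L|_\Sigma \otimes N^{-(k+1)}) = H^1(\CP_{m-1}, \mathcal{O}(-k))$, and this is precisely where the hypothesis $m \geq 3$ enters: since $\CP_{m-1}$ then has complex dimension at least $2$, the Bott vanishing $H^1(\CP_{m-1}, \mathcal{O}(j)) = 0$ holds for every $j$, so each of these extension problems is unobstructed. (For $m = 2$ the group $H^1(\CP_1, \mathcal{O}(-k))$ is nonzero once $k \geq 2$, which is the source of the extra difficulties one must confront in the surface case.) This produces formal extensions to all orders; passing from these formal sections to genuine holomorphic sections on an actual neighborhood is where the real analytic content sits, and I would invoke the classical formal principle for submanifolds with positive normal bundle, due to Grauert and Griffiths, which — because $N = \mathcal{O}(1)$ is positive — guarantees convergence on an honest neighborhood of $\Sigma$.

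Finally, the $m+1$ sections $(s_0, s_1, \ldots, s_m)$ of $L$ have no common zero near $\Sigma$ and therefore define a holomorphic map $F : \mathscr{U} \to \CP_m$. Along $\Sigma$ we have $s_0 \equiv 0$ while $(s_1, \ldots, s_m)$ realizes the standard embedding, so $F$ carries $\Sigma$ biholomorphically onto the hyperplane $\{ z_0 = 0\} \subset \CP_m$; moreover $s_0$ supplies the missing transverse direction, so the differential of $F$ is an isomorphism at every point of $\Sigma$. Thus $F$ is a local biholomorphism along the compact set $\Sigma$ and is injective on $\Sigma$, hence restricts to a biholomorphism of some smaller neighborhood $\mathscr{U}' \subseteq \mathscr{U}$ onto an open neighborhood of the hyperplane, which is exactly the desired conclusion. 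I expect the main obstacle to be this convergence step — upgrading the unobstructed formal extensions to genuine holomorphic sections on a true open neighborhood — since the order-by-order vanishing is purely formal, and it is only the positivity of $N$ that forces the formal neighborhood to faithfully reflect the actual one.
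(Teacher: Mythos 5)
Your proposal is sound, and it reaches the lemma by a genuinely different route from the one the paper takes---in fact, it is essentially the alternative strategy that the authors themselves sketch in the remark immediately following the lemma, where one first shows that the infinitesimal neighborhoods of $\Sigma\subset\mathscr{X}$ are all standard \cite{thick} and then invokes the formal principle \cite{prinzip,formal}. The paper's own proof is instead deformation-theoretic and projective-geometric: Kodaira's completeness theorem \cite{kodsub} produces the $m$-dimensional family $\mathscr{Y}$ of deformations of $\Sigma$; each $\Sigma_y$ is identified with $\PP(\Lambda^{1,0}_y\mathscr{Y})$, the dual of the projectivized tangent space; the hyperplanes $\CP_{m-2}\subset\Sigma_y$ then induce a holomorphic projective connection on $\mathscr{Y}$ whose geodesics are the curves $\gamma_z$ and for which the incidence hypersurfaces $\mathscr{S}_x$ are totally geodesic; and the Schouten--Struik theorem \cite{schoustr} then forces this connection to be projectively flat when $m\geq 3$, after which the incidence map $x\mapsto\mathscr{S}_x$ supplies the biholomorphism into the dual projective space. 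Note that the hypothesis $m\geq 3$ enters the two arguments at entirely different points: for you, through the vanishing of $H^1(\CP_{m-1},\mathcal{O}(j))$ for all $j$ once $\dim_{\CC}\CP_{m-1}\geq 2$; for the paper, through the fact that vanishing Weyl projective curvature implies projective flatness only in dimension $\geq 3$. Your route buys a transparent obstruction calculus that correctly signals why the surface case is harder (consonant with the more delicate analysis culminating in Lemma \ref{aria}); the paper's route buys independence from the formal principle, replacing that hard convergence theorem with Kodaira theory plus classical projective differential geometry, and the structures it builds ($\mathscr{Y}$ and its projective connection) are reused in the $m=2$ discussion of Section~\ref{jiffy}.

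One step of your write-up should be restated, though it does not affect the outcome. The formal principle of Commichau--Grauert \cite{prinzip} and Hirschowitz \cite{formal} (the attribution to ``Grauert and Griffiths'' is not quite right: Griffiths' convergence results in the positively embedded case were incomplete, which is why those later papers exist) does \emph{not} assert that your formal sections $s_1,\ldots,s_m$ converge; it asserts that a formal isomorphism of neighborhoods implies the existence of \emph{some} biholomorphism of genuine neighborhoods. The correct way to finish is therefore: your unobstructed extensions, together with $s_0$ and the formal inverse function theorem (the differential of $(s_0:s_1:\cdots:s_m)$ is invertible along $\Sigma$, so the map can be inverted order by order), exhibit a formal isomorphism between the formal neighborhood of $\Sigma\subset\mathscr{X}$ and that of a hyperplane $\CP_{m-1}\subset\CP_m$; the formal principle, applicable because $N=\mathcal{O}(1)$ is positive, then hands you the desired biholomorphism outright. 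With this phrasing your final paragraph---the map $F$ defined by the sections, its local invertibility and injectivity near $\Sigma$---becomes unnecessary, which is fortunate, since as written it presupposes that the $s_j$ are honest holomorphic sections on an actual neighborhood, which is exactly what the formal principle does not give you.
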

\begin{proof} Since the normal bundle of $\Sigma\cong \CP_{m-1}$ is isomorphic to $\mathcal{O}(1)$, and 
since $H^1(\CP_{m-1}, \mathcal{O}(1))=0$, a theorem of Kodaira \cite{kodsub} 
implies that there is a complete analytic family of compact complex submanifolds of dimension
$h^0(\CP_{m-1}, \mathcal{O}(1))=m$ which represents all small deformations of $\Sigma
\subset \mathscr{X}$ 
through compact complex submanifolds. Since $\CP_{m-1}$ is rigid, and since $h^{0,1}(\CP_{m-1})=0$,
we may assume, by shrinking the size of the family if necessary, that every submanifold in the family 
is biholomorphic to $\CP_{m-1}$, and has normal bundle $\mathcal{O}(1)$. Let us use $\mathscr{Y}$ to denote the
complex $m$-manifold which parameterizes these hypersurfaces; and for any 
$y\in \mathscr{Y}$, let  $\Sigma_y\subset \mathscr{X}$ be the corresponding complex hypersurface. Note that, 
by construction, $\Sigma = \Sigma_o$ for some base-point $o\in \mathscr{Y}$. 

Now 
Kodaira's theorem also gives us a natural identification of the tangent space $T^{1,0}_y\mathscr{Y}$ 
with the  holomorphic sections of the normal bundle of $\Sigma_y\subset \mathscr{X}$. Since 
$H^0(\CP_{m-1}, \mathcal{O}(1))$ consists of linear functions on $\CC^m$, the space of
complex directions $\PP (T^{1,0}_y\mathscr{Y})$ can thus  be naturally  identified with the space
of hyperplanes $\CP_{m-2}\subset \Sigma_y\cong \CP_{m-1}$; in other words, each 
 $\Sigma_y$ is exactly the dual projective space $\PP (\Lambda^{1,0}_y\mathscr{Y})$ of the projectivized tangent space $\PP (T^{1,0}_y\mathscr{Y})$. 

This leads us to consider the space $\mathscr{Z}$ of those  embedded $\CP_{m-2}$'s in $\mathscr{X}$
that arise as hyperplanes in the various  $\Sigma_y$.
Thus, by definition,  each $z\in \mathscr{Z}$
 corresponds to  a submanifold $\Pi_z\cong \CP_{m-2}$ of 
$\mathscr{X}$. But since $\Sigma_y=\PP(\Lambda^{1,0}_y\mathscr{Y})$, any point
of the projectivized tangent bundle $${\zap p}: \PP(T^{1,0}\mathscr{Y} )\to \mathscr{Y}$$ also gives
rise to some such submanifold $\Pi_z$. 
Since any $\Pi_z\cong \CP_{m-2}$ has normal bundle
$\mathcal{O}(1) \oplus \mathcal{O}(1)$, 
 the family
$\Pi_z$, $z\in \mathscr{Z}$, is therefore complete in the sense of Kodaira, because every section
of the normal bundle $\mathcal O(1)\oplus \mathcal{O}(1)$ can be realized by some variation in 
$\PP (T^{1,0}\mathscr{Y})$. In fact, this observation actually tells us a great deal more; not only is $\mathscr{Z}$ a complex manifold of complex
dimension $2m-2$, but there is a natural surjective holomorphic submersion 
${\zap q}: \PP (T^{1,0}\mathscr{Y})\to {\mathscr Z}$. We thus obtain a   double fibration 
\setlength{\unitlength}{1ex}
\begin{center}\begin{picture}(20,17)(0,3)
\put(10,17){\makebox(0,0){$\PP(T^{1,0}\mathscr{Y})$}}
\put(2,5){\makebox(0,0){$\mathscr{Z}$}}
\put(18,5){\makebox(0,0){$\mathscr{Y}$}}
\put(15,12){\makebox(0,0){${\zap p}$}}
\put(5,12){\makebox(0,0){${\zap q}$}}
\put(11,15.5){\vector(2,-3){6}}
\put(9,15.5){\vector(-2,-3){6}}
\end{picture}\end{center}
which embeds $\PP (T^{1,0}\mathscr{Y})$ into the product ${\mathscr Z}\times {\mathscr Y}$,
and thereby realizes it as
$$\PP (T^{1,0}\mathscr{Y})= \{ (z,y) \in {\mathscr Z}\times {\mathscr Y}~|~\Pi_z \subset \Sigma_y\}.$$
In particular, for any $z\in {\mathscr Z}$, the curve $\gamma_z\subset
\mathscr{Y}$ given by ${\zap p}[{\zap q}^{-1}(z)]$ exactly consists of those $y\in {\mathscr Y}$ for
which $\Sigma_y\supset \Pi_z$. But this also shows  that $\gamma_z$ is an immersed complex 
curve, with tangent space at $y$ exactly consisting of sections of the normal bundle $\mathcal{O}(1)$
of $\Sigma_y\cong \CP_{m-1}$ which 
vanish at the hyperplane $\Pi_z\cong \CP_{m-2}$. Hence 
 the lift $\tilde{\gamma}_z\to \PP (T^{1,0}\mathscr{Y})$
of  $\gamma_z$ defined by $\tilde{\gamma}_z:=T^{1,0}\gamma_z$ coincides with ${\zap q}^{-1}(z)$. 
In particular, the holomorphic system of complex curves $\gamma_z$, $z\in {\mathscr Z}$, has the property
that there is exactly one such curve tangent to each direction in ${\mathscr Y}$. 
 By \cite[Proposition 1.2.I]{lebthes},  the curves $\gamma_z$ are therefore the 
unparameterized geodesics of a unique holomorphic projective connection on $\mathscr{Y}$; moreover,
by replacing  $\mathscr{Y}$ with a smaller neighborhood of $o$  if necessary, we may arrange that this projective connection 
is globally represented by some torsion-free holomorphic affine connection 
$\nabla$, with respect to which $\mathscr{Y}$  is geodesically convex. 
This allows us to identify $\mathscr{Z}$ with the space of unparameterized complex geodesics of $\nabla$.

Let $\mathscr{U}\subset \mathscr{X}$ be the open set defined by 
$$\mathscr{U} = \bigcup_{y\in \mathscr{Y}}\Sigma_y.$$
The fact that this is open follows from the fact that  the normal bundle $\mathcal{O}(1)$ of every $\Sigma_y$
is everywhere generated by its global sections.
But now, by construction,   every $x\in \mathscr{U}$ belongs to 
 $\Sigma_y$ for some $y\in \mathscr{Y}$. For each $x\in \mathscr{U}$, we can therefore  
define  a non-empty hypersurface 
$\mathscr{S}_x\subset \mathscr{Y}$  by 
$$
\mathscr{S}_x:=\{ y\in \mathscr{Y}~|~ x\in \Sigma_y\}.
$$
This is a non-singular hypersurface,  because the normal bundle of each $y\in \mathscr{S}_x$
has a global holomorphic section which is non-zero at $x\in \Sigma_y$;  the set of 
normal sections vanishing at $x$ thus has  complex codimension $1$, and exactly 
corresponds to  $T^{1,0}_y\mathscr{S}_x\subset T^{1,0}_y\mathscr{Y}$. Moreover, 
since  $\Sigma_y= \PP(\Lambda^{1,0}_y\mathscr{Y})$, 
the tangent space $T^{1,0}_y\mathscr{S}_x$, for any 
 $x\in \Sigma_y$,  is exactly the 
hyperplane in $T^{1,0}_y\mathscr{Y}$ annihilated by the $1$-dimensional subspace 
$x\subset \Lambda^{1,0}_y\mathscr{Y}$. It follows that  there is a hypersurface
 $\mathscr{S}_x$ tangent to any given hyperplane in $T^{1,0}\mathscr{Y}$.

 However,  with respect to $\nabla$, the 
 hypersurfaces  $\mathscr{S}_x$ are all {\em totally geodesic!}
 Indeed, if $y\in \mathscr{S}_x$ and $\xi\in T^{1,0}_y\mathscr{S}_x-0$, the section of the normal 
 bundle of $\Sigma_y$ which represents $\xi$ must vanish at $x$, and must do so at some 
 $\Pi_z\cong \CP_{m-2}$ containing $x$. The geodesic $\gamma_z$ through $y$ in the direction $\xi$
 therefore precisely consists of those $y^\prime \in \gamma_z\subset \mathscr{Y}$ for which
   $\Pi_z\subset \Sigma_{y^\prime}$. But since $x\in \Pi_z$, we therefore have
   $x\in \Sigma_{y^\prime}$ for every $y^\prime \in \gamma_z$, and it therefore follows that 
   $\gamma_z\subset \mathscr{S}_x$. This shows that $\mathscr{S}_x$  is totally geodesic,
   as claimed.

However, a classical theorem of Schouten and Struik \cite[p. 182]{schoustr} asserts that a projective
connection  in dimension $m\geq 3$ is projectively flat iff every hyperplane element is tangent to 
a totally geodesic hypersurface; cf. \cite[p. 290]{schouten}. Thus $\nabla$ is projectively flat, 
and $o\in {\mathscr Y}$ therefore has a  neighborhood  which 
can be identified with a ball in $\CC^m$, in such a manner that the unparameterized 
geodesics of $\nabla$ are just the intersections of  complex lines in $\CC^m$
with the ball. Let us again shrink $\mathscr{Y}$ by replacing it with this ball about $o$. 
The hypersurfaces $\mathscr{S}_x$ are now just the intersections of hyperplanes in $\CC^m$
with the ball $\mathscr{Y}$; in other words, thinking of $\CC^m$ as an affine chart on $\CP_m$,
they are just the intersections of projective hyperplanes with a fixed ball about $o$. For
the smaller ${\mathscr U}$ that corresponds to this smaller $\mathscr{Y}$, we therefore 
get an injective  holomorphic map to the dual projective space $\CP_m^*$ by sending $x\in {\mathscr U}$ to 
the hyperplane which intersects ${\mathscr Y}$  in ${\mathscr S}_x$. This provides
 the promised biholomorphism
between ${\mathscr U}\supset \Sigma_o=\Sigma$ and a neighborhood of a hyperplane in 
projective $m$-space. 
\end{proof}

\begin{rmk}
The above-cited result of Schouten and Struik  is proved by  showing that the Weyl projective
curvature of the projective connection vanishes, and then using the  fact, due to Weyl 
\cite[p. 105]{weylproj},
that, when $m\geq 3$, 
 this curvature condition is equivalent to the projective connection being projectively flat. 
 The fact that this fails when $m=2$ gives the complex surface case an entirely different flavor, as we
 will see in Lemma \ref{aria} below. 
 
There  are certainly  many other ways of proving the above result.
One alternative  strategy would proceed by first using  \cite{thick} to  show
 the infinitesimal neighborhoods of $\Sigma \subset {\mathscr X}$ are all 
standard, and then invoking  \cite{prinzip} or \cite{formal} to conclude that a neighborhood
of $\Sigma\subset \mathscr{X}$ is therefore biholomorphic to a neighborhood of $\CP_{m-1}\subset \CP_m$. 
\end{rmk}

Perhaps the single  most important consequence of  Lemma \ref{nostalgia} is that $J$ must always be standard at infinity  when $m\geq 3$.
For us, it is vital that the asymptotic coordinates which put $J$  in   standard form can moreover 
be chosen to be consistent with the hypothesized fall-off of the metric: 

\begin{lem} \label{standard} 
Let $(M^{2m},g,J)$ be an ALE K\"ahler manifold of complex dimension $m\geq 3$ which, in some real coordinate system on each end,
 merely satisfies condition {\rm (\ref{zwei})}, as set out on page \pageref{conditions} above. 
Then there are asymptotic  complex coordinates $(z^1, \ldots , z^m)$  on the universal  cover of $\widetilde{M}_\infty$ of any  end, 
 in which the complex structure $J$ becomes the standard one on $\CC^m$,
and in which the metric has fall-off 
$$g_{jk} = \delta_{jk} + O(|z|^{1-m-\varepsilon}), \quad  g_{jk,\ell} = O(|z|^{-m-\varepsilon})$$
for some $\varepsilon > 0$. 
\end{lem}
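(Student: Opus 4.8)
The plan is to combine the complex-geometric normal form already supplied by Lemmas \ref{ndn} and \ref{nostalgia} with a weighted $\bar\partial$-estimate controlling the induced change of coordinates. First I would note that, as explained on page \pageref{conditions}, hypothesis (\ref{zwei}) forces the Chru\'sciel-type fall-off $g_{jk}=\delta_{jk}+O(\varrho^{1-m-\varepsilon})$, $g_{jk,\ell}=O(\varrho^{-m-\varepsilon})$ for some $\varepsilon>0$, which is precisely the hypothesis of Lemma \ref{ndn}. Applying Lemma \ref{ndn} and then Lemma \ref{nostalgia}---this is where $m\geq 3$ enters---the universal cover $\widetilde{M}_\infty$ is biholomorphic to the complement $\mathscr{U}-\Sigma$ of a hyperplane in a neighborhood $\mathscr{U}$ of a hyperplane in $\CP_m$. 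Pulling back the affine coordinates of $\CP_m$ under this biholomorphism produces genuine $J$-holomorphic coordinates $(z^1,\dots,z^m)$ near infinity in which $J$ is standard. Since $J=J_0$ along $\Sigma$ in the construction of Lemma \ref{ndn}, the restriction of the biholomorphism to $\Sigma$ is an automorphism of $\CP_{m-1}$, hence lies in $\mathrm{PGL}$; after composing with a suitable projective transformation of the target I may normalize the $z^a$ so that their leading part agrees with the naive coordinates $\zeta^a$ coming from the $J_0$-identification $\RR^{2m}\cong\CC^m$.

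The substance of the lemma is then the assertion that the transition $u^a:=z^a-\zeta^a$ decays rapidly enough to preserve the metric fall-off. Because $z^a$ is $J$-holomorphic while $\zeta^a$ is $J_0$-holomorphic, and because the Beltrami-type tensor $\mu$ measuring $J-J_0$ satisfies $\mu\in C^{1,\alpha}_{-(m-1+\varepsilon)}$ (the same fall-off that Lemma \ref{ndn} establishes for $J-J_0$ and its derivative, the Hölder control being inherited from $g\in C^{1,\alpha}_{-\tau}$ via $\nabla J=0$), the functions $u^a$ satisfy a $\bar\partial$-system of the schematic form $\bar\partial u^a=-\mu^a-\mu\cdot\partial u^a$, where $\mu^a=O(\varrho^{1-m-\varepsilon})$ is the principal term and the quadratic term $\mu\cdot\partial u^a$ has strictly faster decay.

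I would then invoke the weighted Schauder theory for $\bar\partial$ on a neighborhood of infinity in $\CC^m$. Since solving a first-order elliptic system gains one power of $\varrho$ in the decay rate and one derivative of regularity, bootstrapping the normalized a priori bound on $u^a$ against this estimate yields $u^a\in C^{2,\alpha}_{-(m-2+\varepsilon)}$, so that $\partial u^a=O(\varrho^{1-m-\varepsilon})$ and $\partial^2 u^a=O(\varrho^{-m-\varepsilon})$. Transforming $g$ under $\zeta\mapsto \zeta+u$, whose Jacobian is $I+O(\varrho^{1-m-\varepsilon})$, then leaves the fall-off $g_{jk}=\delta_{jk}+O(\varrho^{1-m-\varepsilon})$, $g_{jk,\ell}=O(\varrho^{-m-\varepsilon})$ intact, possibly after shrinking $\varepsilon$, which is the claim.

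The main obstacle is precisely this last extraction of the sharp decay of $u^a$ from the $\bar\partial$-system. One must check that the relevant weight $m-2+\varepsilon$ avoids the exceptional indicial roots of $\bar\partial$ on $\CC^m$---harmless, since $\varepsilon>0$ may be perturbed and $m\geq 3$ leaves ample room below the critical weight---use the projective normalization to annihilate the slowly decaying holomorphic part of $u^a$ so that the weighted estimate genuinely applies, and close the bootstrap by absorbing the quadratic term $\mu\cdot\partial u^a$, whose faster decay renders it subordinate. The remaining points, namely that hypothesis (\ref{zwei}) supplies the needed $C^{1,\alpha}$ control of $\mu$ and that the metric transformation law is continuous in the stated weighted norms, are routine.
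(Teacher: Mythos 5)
Your proposal is correct in outline and, after the shared opening, takes a genuinely different route from the paper's. Both arguments begin with Lemmas \ref{ndn} and \ref{nostalgia}, so that the whole content of the lemma becomes the decay of the transition between the $J$-holomorphic coordinates and the original asymptotically flat ones. From there the paper never touches $\bar{\partial}$: since $g$ is K\"ahler, the real and imaginary parts of the new coordinates are \emph{exactly} $g$-harmonic; the paper corrects both coordinate systems to globally $\bar{g}$-harmonic functions via the weighted isomorphism $\Delta_{\bar{g}}\colon C^{2,\varepsilon}_{2-m-\varepsilon}\to C^{0,\varepsilon}_{-m-\varepsilon}$ of \cite[Theorem 8.3.6]{joycebook}, then quotes Bartnik's theorem \cite[Theorem 3.1]{bartnik} that the space of harmonic functions of sub-quadratic growth has dimension exactly $2m+1$, forcing the two systems to be affinely related; a final Hermitian linear-algebra step ($A^tA=B^*B$) finishes. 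You instead study the flat equation $\bar{\partial}_0 u^a=f^a$ satisfied by the difference of coordinates, and replace Bartnik's count by Hartogs plus Liouville: a holomorphic function of linear growth on an exterior domain in $\CC^m$, $m\geq 2$, extends to an entire function and is therefore affine. That substitution is more elementary than Bartnik's theorem, and it makes the linear part of the transition complex-linear for free, trivializing the paper's last step. What the paper's route buys in exchange is that it is entirely \emph{linear}: because the harmonicity of the new coordinates is exact rather than approximate, there is no nonlinear coupling, no bootstrap, and only scalar Poisson equations are ever solved.

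Three points in your write-up need repair, though none is fatal. First, ``weighted Schauder theory for $\bar{\partial}$'' understates the crux: for $m\geq 2$ the operator $\bar{\partial}$ on functions is \emph{overdetermined}, so interior estimates alone do not produce the decaying solution $v^a$ of $\bar{\partial} v^a=f^a$ that your splitting of $u^a$ into holomorphic plus decaying parts requires. You need genuine solvability on the end, hence cohomological input --- either the vanishing $H^{0,1}_{\bar{\partial}}(\CC^m-\mathbf{D}^{2m})=0$ of Andreotti--Grauert \cite{andgrau} (this, and not the weights, is the second place where $m\geq 3$ enters your analysis; the weight $2-m-\varepsilon$ lies in the safe interval $(2-2m,0)$ even for $m=2$, whereas the cohomology group is nonzero for $m=2$) together with a weighted estimate for a solution operator, or else apply $\bar{\partial}_0^*$ to convert your system into $\Delta_0 u^a=2\bar{\partial}_0^*f^a$ and cite for the flat Laplacian exactly the kind of weighted isomorphism the paper cites from \cite{joycebook}. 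Second, the assertion that the quadratic term $\mu\cdot\partial u^a$ has ``strictly faster decay'' is false at the start of the bootstrap, where $\partial u^a$ is merely bounded and both terms are $O(\varrho^{1-m-\varepsilon})$; this is harmless, as the iteration needs only the principal rate, but it should be stated correctly. Third, your projective normalization is circular as ordered: you cannot arrange that ``the leading part of $z^a$ agrees with $\zeta^a$'' before knowing that this leading part is projective-linear, which is precisely what the Hartogs--Liouville step establishes. Either run the splitting first and compose with the inverse of the resulting affine map afterwards, or note that the $C^1$-regularity of the compactified biholomorphism along $\Sigma$, together with the transitivity of the projective transformations fixing the hyperplane pointwise on the relevant $1$-jets, lets you arrange $u^a=o(|\zeta|)$ a priori, after which the holomorphic part is a constant.
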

\begin{proof} Let $(\tilde{x}^1, \ldots , \tilde{x}^{2m})$ be some given  asymptotic  coordinate system in which $g_{jk}-\delta_{jk} \in C^{1,\alpha}_{-\tau}$
for some $\tau > m-1$ and some $\alpha\in (0,1)$, and let us once again set $\varepsilon = \min (\tau - (m-1), \alpha )$. 
We now think of   $\RR^{2m}$, with real coordinates $(\tilde{x}^1, \ldots , \tilde{x}^{2m})$ and the constant-coefficient almost-complex structure $J_0$
of the proof of Lemma \ref{ndn}, as an affine
 chart on  $\CP_m$. 
Lemma \ref{nostalgia}, in conjunction with the proof of Lemma \ref{ndn},  
then gives us  a $C^{1}$ diffeomorphism $\Psi$ between neighborhoods of $\CP_{m-1}\subset \CP_m$ that,  by \cite{nijenwoo,malgrange},  restricts as  a $C^{2,\varepsilon}$ diffeomorphism 
between the complement of a compact set in $\RR^{2m}$, with coordinates $\tilde{x}$,  and   the complement of a compact set in $\CC^m$, equipped with 
standard  complex coordinates  $(z^1, \ldots , z^m)$; and let $(x^1, \ldots , x^{2m})$ be the real 
and imaginary parts of $(z^1, \ldots , z^m)$, so that $z^j= x^{2j-1}+ i x^{2j}$. 
Now  notice that, for some large constant $C$,  we in particular have
$$C^{-1} |x| < |\tilde{x}|< C|x|$$
outside a large ball, simply because $\Psi$ is  uniformly Lipschitz near $\CP_{m-1}\subset \CP_m$. 
Because $\Psi$ is by construction holomorphic with respect to 
the complex structure $J$, the functions $z^\mu:=\Psi^*z^\mu$ are holomorphic with respect to the complex structure  associated with our K\"ahler metric,
so their real and imaginary parts $x^j:=\Psi^*x^j$ are harmonic functions with respect to the K\"ahler metric $g$. 
We now use a  partition of unity to construct a $C^{1,\varepsilon}$ Riemannian metric $\bar{g}$  on 
$\RR^{2m}$ which coincides
with $g$ outside some large ball, and use a smooth cut-off function 
to construct $C^{2,\varepsilon}$ functions $f^j$ on $\RR^{2m}$ which coincide with the $x^j:= \Psi^*x^j$ outside this same ball.
The Laplacians $\Delta_{\bar{g}}f^j$  of these functions are then compactly supported $C^{0,\varepsilon}$ functions
on $\RR^{2m}$. 

The fall-off of the first derivative of $g$ in $\tilde{x}$-coordinates implies  that
$$\Delta_{\bar{g}} \tilde{x}^j = g^{k\ell}{\mathbf \Gamma}_{k\ell}^j\in  C^{0,\varepsilon}_{-m-\varepsilon}(\RR^{2m}).$$
On the other hand, since $-m - \varepsilon \in (-2m, -2)$, the Laplacian $\Delta_{\bar{g}}$ induces an isomorphism \cite[Theorem 8.3.6]{joycebook}
between $C^{2,\varepsilon}_{2-m -\varepsilon}(\RR^{2m})$
and $C^{0,\varepsilon}_{-m-\varepsilon}(\RR^{2m})$. Thus, for each $j$, there is 
 a unique  $u^j \in C^{2,\varepsilon}_{2-m -\varepsilon}(\RR^{2m})$
with $\Delta_{\bar{g}} u^j = \Delta_{\bar{g}}\tilde{x}^j$.
The functions  $\tilde{y}^j:=\tilde{x}^j- u^j$ are then $\bar{g}$-harmonic functions 
 on $\RR^{2m}$, and  provide  coordinates at infinity   that 
are asymptotic to the $\tilde{x}^j$. But,
 since $\Delta_{\bar{g}}f^j\in C^{0,\varepsilon}_\beta$ for $\beta< -2m$, \cite[Theorem 8.3.6]{joycebook} also asserts that there is, for each $j$, a unique 
function $v^j\in C^{2,\varepsilon}_{2-2m}$  with $\Delta_{\bar{g}}v^j= \Delta_{\bar{g}}f^j$. The functions $y^j=f^j-v^j$ are then yet another
set of $\bar{g}$-harmonic functions which give us coordinates at infinity,  this time  instead asymptotic to the  $x^j$. 

Now choose some $\eta\in (1,2)$ and some   $q > 2m$.
Since   the $\tilde{y}^j$ and the $y^j$ are  $O(|x|)=O(|\tilde{x}|)$ at infinity, they  therefore   belong to the weighted space 
$L^q_\eta$ used by Bartnik \cite{bartnik}.  On the other hand,  our $C^{1,\alpha}_{-\tau}$  fall-off condition on the metric guarantees  that 
$\bar{g}_{jk}-\delta_{jk} \in W^{1,q}_{1-m-\varepsilon/2}$ in $\tilde{x}$ coordinates,  so one of Bartnik's key results \cite[Theorem 3.1]{bartnik} now asserts that 
$$\mathcal{H}_{q,\eta} := \{ f\in L^q_\eta~|~ \Delta_{\bar{g}}f=0\}$$
has dimension $n+1=2m+1$, and hence   that 
$$\mbox{span}\, \{ 1,  \tilde{y}^1, \ldots  \tilde{y}^{2m}\} = \mathcal{H}_{q,\eta} = \mbox{span}\, \{ 1, y^1, \ldots , y^{2m} \}. $$
It follows that  the  $y^j$ are  affine-linear 
combinations of the $\tilde{y}^k$; in other words, 
$$y^j = a^j + A^j_k \tilde{y}^k$$
for an appropriate translation $\vec{a}\in \RR^{2m}$ and an appropriate invertible linear transformation $A\in \mathbf{GL}(2m, \RR)$. 
Consequently, 
$$x^j= a^j+ A^j_k\tilde{x}^k + w^j$$
outside a large ball, where 
$$w^j= v^j-A^j_ku^k\in C^{2,\varepsilon}_{2-m-\varepsilon}.$$
In particular, 
$\frac{\partial x^j}{\partial \tilde{x}^k}- A^j_k \in C^{1,\varepsilon}_{1-m-\varepsilon}$, and  inverting the Jacobian matrix then tells us that, as  functions of $\tilde{x}$, 
 $\frac{\partial \tilde{x}^k}{\partial {x}^j}- (A^{-1})^k_j\in C^{1,\varepsilon}_{1-m-\varepsilon}$.
We thus have 
$$\frac{\partial}{\partial x^j} = \left( A^k_j+ U^k_j\right) \frac{\partial}{\partial \tilde{x}^k}$$
where 
$U^k_j = \frac{\partial w^k}{\partial \tilde{x}^\ell} \frac{\partial \tilde{x}^\ell}{\partial x^j} \in C^{1,\varepsilon}_{1-m-\varepsilon}$.
In $\tilde{x}$ coordinates,  we therefore see that 
$$g(\frac{\partial}{\partial x^j}, \frac{\partial}{\partial x^k}) -  A^\ell_jA^\ell_k \in  C^{1,\varepsilon}_{1-m-\varepsilon}$$
and that 
$$\frac{\partial}{\partial x^\ell}  \left[ g(\frac{\partial}{\partial x^j}, \frac{\partial}{\partial x^k}) \right] \in  C^{0,\varepsilon}_{-m-\varepsilon}.$$
Since $C^{-1} |x| < |\tilde{x}| < C |x|$, this now immediately implies that  
$$
g_{jk} = (A^tA)_{jk} + O( |x|^{1-m-\varepsilon}), \quad g_{jk,\ell} =  O( |x|^{-m-\varepsilon})
$$
in $x$ coordinates. However, since the K\"ahler metric $g$ is Hermitian in  the complex coordinate system defined by the $z^j=x^{2j-1}+ ix^{2j}$, the matrix  $A^tA$
must represent a Hermitian inner product on $\CC^m$, and so can be written as $B^*B$ for some $B\in \mathbf{GL}(m, \CC)$. 
Thus, after a complex-linear change of coordinates $\vec{z}\mapsto B^{-1}\vec{z}$, 
we will then have 
$$
g_{jk} = \delta_{jk} + O( |z|^{1-m-\varepsilon}), \quad g_{jk,\ell} =  O( |z|^{-m-\varepsilon}),
$$
as desired. 
\end{proof}

\begin{rmk} The  above proof dovetails with Bartnik's  weighted-Sobolev results in a way that lets us avoid having to reinvent the wheel. However, 
we certainly could have  avoided passing to a complete manifold or citing Bartnik's count of harmonic functions of sub-quadratic growth. Indeed, 
the results in \cite[Chapter 6]{marshthes} allow one to argue directly that any  harmonic function on $(\widetilde{M}_\infty ,g)$  of polynomial growth is  asymptotic to 
a  harmonic polynomial on Euclidean space $(\RR^{2m}, \delta )$.
\end{rmk}

While Lemma \ref{standard} is still phrased in terms of {\em any} end, we will soon see  that there can actually  only  be  {\em one} end. 
Indeed, 
Lemma \ref{nostalgia}  opens up a thoroughfare   to  this and  other   global results,  
via  the following remarkable consequence:

\begin{lem} \label{compactify} 
Let $(M^{2m},g,J)$ be an ALE K\"ahler manifold of complex dimension $m\geq 3$. Then we may compactify
$(M,J)$ as a complex orbifold $(X,J_X)$ by adding a copy of $\CP_{m-1}/\Gamma_j$ to each end. Moreover, the resulting
complex orbifold {\bf admits  K\"ahler metrics}. 
\end{lem}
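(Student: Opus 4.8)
The plan is to treat the two assertions in turn, first building the compactification $X$ and then equipping it with a K\"ahler form.

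For the compactification, I would apply Lemma~\ref{ndn} together with Lemma~\ref{nostalgia} to each end: the universal cover $\widetilde{M}_\infty$ of an end is biholomorphic to $\mathscr{U}-\Sigma$, where $\mathscr{U}$ is a neighborhood of a hyperplane $\Sigma\cong\CP_{m-1}$ in $\CP_m$. By Lemma~\ref{standard} the complex structure is exactly standard at infinity and the metric is asymptotically Euclidean in the associated complex coordinates $(z^1,\dots,z^m)$; since the deck group $\Gamma_j$ acts by holomorphic isometries of this asymptotically flat K\"ahler structure, a standard argument identifies $\Gamma_j$ with a finite subgroup of $\mathbf{U}(m)$ acting linearly, and this linear action extends to a holomorphic action on $\mathscr{U}\subset\CP_m$ fixing $\Sigma$ setwise. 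Gluing $\mathscr{U}/\Gamma_j$ onto $M$ along the collar then produces a compact complex orbifold $X$, obtained from $M$ by adjoining one divisor $\Sigma_j\cong\CP_{m-1}/\Gamma_j$ per end. Because $\Gamma_j$ acts freely on $S^{2m-1}$, hence freely on $\CC^m-\{0\}$ and therefore on $\mathscr{U}-\Sigma$, the orbifold locus of $X$ is contained in $\bigcup_j\Sigma_j$, as required.

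For the K\"ahler assertion the obstacle is that the ALE form $\omega$ cannot itself extend across the $\Sigma_j$: in the compactifying coordinate $w_1=1/z^1$ its potential $\tfrac12|z|^2$ has a double pole along $\Sigma$, whereas a form extending smoothly across $\Sigma$ must have bounded potential, like the Fubini--Study potential $\log(1+|z|^2)$. I would therefore discard $\omega$ near each end and build a brand-new orbifold K\"ahler form $\Omega$ that equals $\omega$ on a fixed compact core and is radial near each $\Sigma_j$. The key point is that every purely radial potential $h(|z|^2)$ is invariant under $\mathbf{U}(m)\supset\Gamma_j$, so any such construction descends automatically to $X$. Recall that $i\partial\bar\partial h(|z|^2)$ is positive precisely when $h'>0$ and $(t\,h')'>0$ for $t=|z|^2$, that $h(t)=\tfrac12 t$ recovers the Euclidean form, and that $h(t)=c\log(1+t)$ recovers $c$ times Fubini--Study, whose potential differs from a smooth-across-$\Sigma$ potential only by the pluriharmonic term $-c\log|w_1|^2$ and hence extends across $\Sigma$.

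The construction of $\Omega$ would proceed in two collars. In an inner collar $R_1\le|z|\le 2R_1$ I would cut off the ALE correction $\eta:=\phi-\tfrac12|z|^2$ (writing $\omega=i\partial\bar\partial\phi$ near infinity, which is possible because each end is homotopy equivalent to $S^{2m-1}$ and $\omega$ is therefore $i\partial\bar\partial$-exact there); since $\eta=O(|z|^{3-m-\varepsilon})$ the resulting error $i\partial\bar\partial(\chi\eta)$ is $o(1)$ relative to the Euclidean form, so for $R_1$ large the interpolated form stays positive and transitions from $\omega$ to the flat form $\tfrac12 i\partial\bar\partial|z|^2$. In an outer collar $R_2\le|z|\le R_3$ I would then interpolate the radial profile from $h=\tfrac12 t$ to $h=c\log(1+t)$; beyond $R_3$ the potential is exactly $c\log(1+|z|^2)$ and $\Omega=c\,\omega_{FS}$ extends smoothly across $\Sigma_j$ as an orbifold K\"ahler form. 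The delicate point---and the crux of the whole lemma---is the scale mismatch between the quadratic Euclidean growth and the logarithmic Fubini--Study growth: along the interpolation the quantity $t\,h'$ must increase monotonically, running from $\tfrac12 t\sim\tfrac12 R_2^2$ up to the bounded limit $c$ of $c\,t/(1+t)$. This is possible only if $c$ is chosen enormous, say $c>\tfrac12 R_3^2$, after which a monotone interpolation of $t\,h'$ with $(t\,h')'>0$ and $h'>0$ clearly exists. Assembling the three pieces yields a smooth $\Gamma_j$-invariant K\"ahler form near each end that matches $\omega$ on the core, hence a global orbifold K\"ahler form on $X$, proving that $X$ admits K\"ahler metrics.
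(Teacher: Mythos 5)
Your first paragraph follows the paper's own route: Lemmas \ref{ndn} and \ref{nostalgia} produce the capped-off universal covers, and the quotients are glued in. One caution: your claim that ``a standard argument identifies $\Gamma_j$ with a finite subgroup of $\mathbf{U}(m)$ acting linearly'' in the \emph{new} complex coordinates is not automatic. The deck group does act linearly in the original real asymptotic coordinates, but the complex coordinates of Lemma \ref{nostalgia} are produced by a diffeomorphism with no a priori equivariance. The paper closes this by noting that the $\Gamma_j$-action extends \emph{continuously} to $\mathscr{U}$, is holomorphic off the hypersurface $\Sigma$ and hence everywhere, extends to all of $\CP_m$ by Hartogs, and is therefore projective-linear; only then does one conclude $\Gamma_j \subset \mathbf{U}(m)$. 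This is a repairable hand-wave rather than a fatal error.

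The K\"ahler construction is where your route genuinely diverges from the paper's, and it contains two real gaps. First, the existence of a global potential: ``each end is homotopy equivalent to $S^{2m-1}$ and $\omega$ is therefore $i\partial\bar\partial$-exact there'' is not a valid inference. Topological $2$-connectedness only gives $\omega = d\alpha$; to promote a $d$-primitive to a $\partial\bar\partial$-potential one must solve a $\bar\partial$-equation, i.e.\ one needs the vanishing $H^{0,1}_{\bar\partial}(\CC^m - \mathbf{D}^{2m}) = 0$. This is the Andreotti--Grauert theorem and holds only for $m \geq 3$; for $m=2$ this Dolbeault group is infinite-dimensional, which is precisely why the lemma carries the hypothesis $m\geq 3$ and why the surface case of the paper requires entirely different arguments. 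Second, and more seriously for your scheme: you assert $\eta := \phi - \frac{1}{2}|z|^2 = O(|z|^{3-m-\varepsilon})$, but the potential $\phi$ obtained from solving $\bar\partial$ is only determined up to pluriharmonic functions and comes with no decay whatsoever. Producing a \emph{decaying} potential for $\omega - \omega_{\mathrm{Euc}}$ is a ``$\partial\bar\partial$-lemma with decay'': a radial homotopy operator does give a decaying $d$-primitive when $m\geq 3$, but one must then solve $\bar\partial$ again with weighted estimates, which is genuine analytic work that you have not supplied. Your inner-collar gluing collapses without this decay, since the error $i\partial\bar\partial(\chi\eta)$ is small only if $\eta$ and its first two derivatives decay. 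The paper's construction is engineered exactly to sidestep this issue: it cuts off the \emph{entire} potential $f$ of $\omega$ --- no decay required --- on a fixed compact annulus, and restores positivity there by first replacing $\omega$ by $\omega/\mathbf{N}$ with $\mathbf{N} \gg 0$; after rescaling, this is the same idea as your ``enormous $c$'' multiple of $\omega_{FS}$, but with the compactness of the transition annulus, rather than decay estimates, doing the work. Finally, a small but necessary point in your scheme: the cutoff term $\chi\eta$ descends to the end only if $\eta$ is $\Gamma_j$-invariant, so you must first average the potential over $\Gamma_j$, as the paper does. With the Andreotti--Grauert citation, the averaging, and a proof of the decaying potential, your two-collar interpolation would yield a valid alternative proof; as written, it is incomplete at exactly the steps the paper's rescaling trick was designed to avoid.
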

\begin{proof}
Lemma \ref{ndn} already told us that we could smoothly cap off the universal cover of each end by adding a $\CP_{m-1}$, 
and Lemma \ref{nostalgia} then showed that each such capped-off space is biholomorphic to a neighborhood $\mathscr{U}$ of $\CP_{m-1}\subset \CP_m$. Since 
the action of each $\Gamma_j$  extends continuously to $\mathscr{U}$, and since it is 
holomorphic on the complement of $\CP_{m-1}$, the induced action is actually holomorphic; and since 
Hartogs' theorem also tells us that this action extends holomorphically to all of $\CP_m$, 
$\Gamma_j$  therefore acts on $\mathscr{U}$ by projective linear transformations. This allows us to compactify
$(M,J)$ as a complex orbifold $(X,J_X)$  by adding a copy  of the appropriate $\CP_{m-1}/\Gamma_j$ to each end.
Here, of course,  $\Gamma_j$ is identified with a finite subgroup of $\mathbf{U}(m) \subset \mathbf{SU}(m+1)$, and so acts  
on a neighborhood of $\CP_{m-1}\subset \CP_m$ in a manner that preserves not only the complex structure, but also the standard  Fubini-Study metric. 

Using this  last observation, we will now 
construct a K\"ahler metric $\hat{g}$ on $(X,J_X)$. To do this, we first 
use  our asymptotic coordinates on the complement of a suitable  $\mathbf{K} \Subset M$ to identify the 
 universal cover of each  end  with the complement of a large
closed ball $\mathbf{D}^{2m}\subset \CC^m$ of radius $\varrho_0$ in a $\Gamma_j$-invariant manner. 
Since $\CC^m - \mathbf{D}^{2m}$  is $2$-connected, we can then write the K\"ahler form $\omega$ of our
given ALE K\"ahler metric $g$ as  
$$\omega = d (\beta+  \bar{\beta})  =  \partial  \beta    +   \bar{\partial} \bar{\beta}$$
for some $\bar{\partial}$-closed $(0,1)$-form $\beta$ on $\CC^m - \mathbf{D}^{2m}$. 
However, since $m\geq 3$, a result of Andreotti-Grauert \cite[p. 225]{andgrau} tells us that $H^{0,1}_{\bar{\partial}} ( \CC^m -\mathbf{D}^{2m})=0$.
 Thus  $\beta = \bar{\partial} h$ for some  function $h$, and we therefore have 
  $$\omega =  
  i \partial \bar{\partial} f$$
where $f= 2\, \Im m \, h$. By averaging over the action of $\Gamma_j$, we  then improve our choice  of  the potential   
  $f$ so as to make it $\Gamma_j$-invariant on each end. 
    
  We now introduce the function $u = \varrho^2 = \sum |z^j|^2$ on each end. If $F(u)$ is 
  any smooth function, then  along the $z^1$-axis we have 
  $$\partial \bar{\partial}F(u) = (u F^\prime )^\prime (u) dz^1 \wedge d\bar{z}^1 + F^\prime (u)\sum_{j= 2}^m dz^j \wedge d\bar{z}^j~,$$
  so that  ${\mathbf U}(m)$-invariance  implies that $i\partial \bar{\partial} F$ is positive semi-definite
  iff $uF^\prime (u)$ is a  non-negative, non-decreasing  function. Now  choose some radius $\varrho_1 > \varrho_0$, and 
  let $\psi (u)$ be a non-decreasing cut-off  function which is  $\equiv 0$ near $u = \varrho_0^2$ and  $\equiv 1$ for $u \geq \varrho_1^2$. 
 Let $F: [0,\infty)  \to [0,\infty)$ be the smooth function defined by 
 \begin{equation}
 \label{gadget}
F (u) = \int_0^u\frac{\psi (t)\, dt}{1+t} ,
\end{equation}
  so  that $uF^\prime (u)= \psi (u)\, [1-(1+u)^{-1}]$ is non-negative and non-decreasing. Since this ensures that  $i\partial \bar{\partial} F$ is positive semi-definite, 
  it follows that, for any constant $\mathbf{N}> 0$, $M$ admits  a K\"ahler metric $g_{\mathbf{N}}$ that equals $g/{\mathbf{N}}$ on the compact set
  $\mathbf{K}\subset M$, and  which has K\"ahler form given by 
  $$\omega_{\mathbf{N}}= \frac{\omega}{\mathbf{N}} + i \partial \bar{\partial}F = i \partial \bar{\partial}\left(F+\frac{f}{\mathbf{N}}\right)$$
 on the ends. In particular, since $i \partial \bar{\partial}F$ coincides with the Fubini-Study K\"ahler form $\omega_{FS}= i \partial \bar{\partial} \log (1+u)$  when
  $u > \varrho_1^2$, we actually  have 
   $$\omega_{\mathbf{N}}= \omega_{FS}  + i {\mathbf{N}}^{-1}\partial \bar{\partial}f$$ 
 when $\varrho > \varrho_1$. 
 Now choose 
 some $\varrho_2 > \varrho_1$, and let $\phi (u)\geq 0$ be a second smooth   cut-off function which is 
  $\equiv 0$ for $u \leq \varrho_1^2$ and $\equiv 1$ for $u \geq \varrho_2^2$. 
 We can then  consider the $(1,1)$-form on $M$ which is defined by 
  $$\hat{\omega}_{\mathbf{N}} =  {\omega}_{\mathbf{N}} - i \mathbf{N}^{-1} \partial \bar{\partial} [ \phi (u ) f]$$
  in the asymptotic regions, and given by ${\omega}_{\mathbf{N}}$ on the compact set $\mathbf{K}$; 
  the fact that  $f$ has been taken to be  $\Gamma_j$-invariant  guarantees that this 
 coordinate  expression is $\Gamma_j$-invariant, and so descends to a well-defined form on each end. 
  However, we then have 
  $$\hat{\omega}_{\mathbf{N}} =  \omega_{FS} +  i  \mathbf{N}^{-1} \partial \bar{\partial}[ (1-\phi ) f]$$
  in the asymptotic regions $\varrho \geq \varrho_1$. Since the Hessian of $(1-\phi)f$ is uniformly bounded with respect to 
  the Fubini-Study metric 
  on the compact   union of the transition annuli $ \varrho_1 \leq \varrho \leq\varrho_2$, 
  it follows that $\hat{\omega}_{\mathbf{N}}$ will be  positive-definite on these annuli for  $\mathbf{N}\gg 0$. 
  On the other hand,  since $\hat{\omega}_{\mathbf{N}}$ agrees with either $\omega_{FS}$ or ${\omega}_{\mathbf{N}}$ everywhere else, 
 it follows that, provided  we take ${\mathbf{N}}$ to be sufficiently large, 
  $\hat{\omega}_{\mathbf{N}}$ will be a  K\"ahler form on all of $M$. 
 But the K\"ahler metric $\hat{g}$ corresponding to  $\hat{\omega}:=\hat{\omega}_{\mathbf{N}}$ for some such suitably  large $\mathbf{N}$
 then exactly coincides  with the standard Fubini-Study metric of each $\CP_m/\Gamma_j$ in the asymptotic region $\varrho > \varrho_2$ of each end, 
 and so naturally extends to all of $(X,J_X)$ as a K\"ahler metric. 
 This shows    that the complex orbifold $X$ is indeed of K\"ahler type,  as claimed. 
\end{proof}

\begin{rmk}
As was  pointed out to us by Ronan Conlon, the above result can be generalized to asymptotically conical K\"ahler manifolds, even when 
 the end is not rationally  $2$-connected. For details, see  \cite[Theorem A (iv)]{heinlon2}. 
\end{rmk}

This now implies  a previously promised result:

\begin{prop} \label{class2} If $m\geq 3$, 
an  ALE K\"ahler $m$-manifold  has only one end. 
\end{prop}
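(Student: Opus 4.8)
The plan is to leverage Lemma~\ref{compactify}, which compactifies $(M,J)$ to a \emph{compact} complex orbifold $(X,J_X)$ of K\"ahler type by adding one copy of $\CP_{m-1}/\Gamma_j$ at each end. The number of ends of $M$ equals the number of such divisors at infinity, so it suffices to show that $X$ carries exactly one such exceptional divisor. The key point is that each compactifying divisor $\Sigma_j \cong \CP_{m-1}/\Gamma_j$ has, on its universal cover, normal bundle $\mathcal{O}(+1)$, hence \emph{positive} self-intersection. I would argue that two disjoint such positive divisors cannot coexist in a compact connected K\"ahler orbifold.

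First I would recall from the proofs of Lemmas~\ref{ndn} and~\ref{nostalgia} that a neighborhood of each $\Sigma_j$ in $X$ is biholomorphic to a neighborhood of the hyperplane $\CP_{m-1}\subset\CP_m$ (modulo $\Gamma_j$), so that the restriction of the line bundle $\mathcal{O}_X(\Sigma_j)$ to $\Sigma_j$ is, on the universal cover, $\mathcal{O}(1)$, which is ample on $\CP_{m-1}$. The divisors $\Sigma_j$ corresponding to distinct ends are pairwise disjoint, since distinct ends are disjoint. Now fix the K\"ahler class $[\hat{\omega}]$ produced by Lemma~\ref{compactify}. I would consider the intersection numbers $\langle [\Sigma_i]\cdot[\Sigma_j]\cdot[\hat\omega]^{m-2}\rangle$: for $i\neq j$ these vanish by disjointness, whereas the self-intersection contribution of each $\Sigma_j$ is strictly positive because $\mathcal{O}(\Sigma_j)|_{\Sigma_j}$ is positive and $[\hat\omega]$ restricts to a K\"ahler class on $\Sigma_j$. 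This positivity is the geometric obstruction I want to exploit.

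The cleanest way to turn this into a contradiction is to invoke the Hodge index theorem (valid on compact K\"ahler manifolds, and extendable to orbifolds by passing to a resolution or working with orbifold cohomology). On the primitive part of $H^{1,1}$ with respect to $[\hat\omega]$, the intersection form $\alpha\mapsto \langle\alpha^2\cdot[\hat\omega]^{m-2}\rangle$ is negative definite. Suppose there were two disjoint ends, giving classes $[\Sigma_1],[\Sigma_2]$, each of positive square and with $\langle[\Sigma_1]\cdot[\Sigma_2]\cdot[\hat\omega]^{m-2}\rangle=0$. Projecting onto the primitive subspace and analyzing the resulting $2\times 2$ Gram matrix, one finds a class of positive square in the primitive part, contradicting negative-definiteness; equivalently, two classes with positive square and zero mutual product force the signature of the form restricted to their span to be $(2,0)$, which is impossible once one accounts for the single positive direction spanned by $[\hat\omega]$. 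Hence $X$ admits at most one divisor at infinity, and $M$ has exactly one end.

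The main obstacle I anticipate is purely technical: making the Hodge-index argument rigorous in the \emph{orbifold} setting, since $X$ is a complex orbifold rather than a smooth manifold. I would handle this either by taking a resolution $\tilde X\to X$ (the $\CP_{m-1}/\Gamma_j$ singularities are quotient, hence resolvable with K\"ahler total space, and the relevant intersection numbers are preserved on the strict transforms up to positive multiples) or by working directly with rational orbifold cohomology and Satake--Baily's extension of Hodge theory to $V$-manifolds. A slicker alternative, avoiding Hodge index entirely, would be to note that $\mathcal{O}_X(\Sigma_j)$ restricts to an ample bundle on $\Sigma_j$ while being effective and supported away from the other $\Sigma_i$; pushing sections of high powers around would again force any second divisor to meet $\Sigma_1$, contradicting disjointness. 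Either route should close the argument, but verifying the orbifold Hodge theory is where the real care is needed.
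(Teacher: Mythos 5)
Your proposal is correct and is essentially the paper's own argument: the paper likewise compactifies via Lemma~\ref{compactify} and applies the orbifold Hodge index theorem (via the Lefschetz decomposition and Hodge--Riemann relations) to the pairing $Q(\alpha,\beta)=\int_X \alpha\wedge\beta\wedge\hat{\omega}^{m-2}$, deriving a contradiction from two $Q$-orthogonal positive-square classes attached to two disjoint ends. The only cosmetic difference is that the paper sidesteps your orbifold-resolution worry by representing these classes with explicitly constructed semi-positive forms $\alpha_j = i\partial\bar{\partial}F$ supported in the closure of each end (which, as the paper's remark notes, are proportional to the Poincar\'e duals of your $\Sigma_j$) and by invoking orbifold Hodge theory directly.
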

\begin{proof}
Let $(M^{2m},g,J)$ be an ALE K\"ahler manifold, where  $m\geq 3$. 
Consider the orbifold compactification $(X,J_X)$ of $(M,J)$ given by Lemma \ref{compactify}, and let $\hat{g}$ be an orbifold  K\"ahler metric on $X$, with 
K\"ahler form $\hat{\omega}$. We may then consider the intersection pairing 
\begin{eqnarray*}
H^{1,1}(X, \RR) \times H^{1,1}(X, \RR) &\stackrel{Q}{\longrightarrow}&\quad  \RR \\
(~ [\alpha ] ~, ~ [\beta ]~ )\quad \qquad &\longmapsto& \int_X \alpha \wedge \beta \wedge \hat{\omega}^{m-2}~.
\end{eqnarray*}
on $H^{1,1}(X):=H^{1,1}_{orb}(X)$. 
However, because Hodge theory is valid in the orbifold setting,  there is a Lefschetz decomposition
  $$H^{1,1}(X, \RR) = \RR [\hat{\omega}] \oplus P^{1,1}(X, \RR),$$
  where the primitive harmonic $(1,1)$-forms $P^{1,1}$ are pointwise orthogonal to the K\"ahler form $\hat{\omega}$. 
 This implies a generalization of the Hodge index theorem: the intersection form $Q$  is of {\em Lorentz type}.
  Indeed, the Hodge-Riemann bilinear relations \cite{GH} tell us that $Q$ is
   positive-definite on $\RR [\hat{\omega}]$, and negative-definite on $P^{1,1}(X, \RR)$. 
   
   Let us now define a closed non-negative $(1,1)$-form $\alpha_j$ on $X$ supported in the closure of the $j^{\rm th}$ end of $M$ 
   by setting  $\alpha_j= i\partial \bar{\partial}F$  in the $j^{\rm th}$ asymptotic region, where $F$ is the function  defined by \eqref{gadget},
and then  extending $\alpha_j$ across the hyperplane at infinity as the Fubini-Study form $\omega_{FS}$, while  setting $\alpha_j\equiv 0$
 outside the closure of the $j^{\rm th}$  end.  The semi-positivity of $\alpha_j$ 
   then guarantees that $Q( [\alpha_j], [\alpha_j]) > 0$ for each $j$. However,  $Q( [\alpha_j], [\alpha_k]) =0$ 
   if $j\neq k$, since the supports of $\alpha_j$ and $\alpha_k$ are then disjoint. If $M$ had two or more ends, 
    $Q$ would thus admit  two orthogonal positive directions. But  since the generalized Hodge index theorem
says that $Q$ is of Lorentz type,   this is  impossible. 
To avoid this  contradiction, we are    thus forced to  conclude that 
$M$ can  only have one end. 
\end{proof}

\begin{rmk} The classes $[\alpha_j]\in H^{1,1}(X)$ in the above proof are proportional to the Poincar\'e duals of the hypersurfaces  $\Sigma_j = \CP_{m-1}/\Gamma_j$ 
arising as the hyperplanes at infinity of  the various ends.
 The fact that $Q( [\alpha_j], [\alpha_k]) =0$ for $j\neq k$ reflects the fact that $\Sigma_j \cap \Sigma_k= \varnothing$, while  the fact that 
 $Q( [\alpha_j], [\alpha_j]) > 0$ reflects  the fact that the homological self-intersection of 
 $\Sigma_j$ is represented by a  positive multiple of  the complex sub-orbifold $\CP_{m-2}/\Gamma_j$. 
This geometrical idea is the link between the above argument and  our proof of  Proposition \ref{class1}  in the complex surface case.

Various other means for proving  Proposition \ref{class2} are also available.  
For example, a  proof directly based on the pseudo-convexity of the boundary
can be found in \cite{weber}. 
Alternatively, uniqueness of the end can  be deduced by  applying \cite[Theorem 6.3]{rossivec} to the  Remmert reduction \cite{grauertex} of $(M,J)$.
 \end{rmk}

With  Lemma \ref{standard} and Proposition \ref{class2} in hand,  
 Theorem \ref{gamma} becomes comparatively easy to prove in complex dimension $m\geq 3$. Here is the key step:

\begin{prop}
\label{epigenome}
Let $(M^{2m},g,J)$ be an  ALE  K\"ahler manifold, $m\geq 3$, satisfying both conditions (\ref{eins}) and (\ref{zwei}), as set forth  on  page 
 \pageref{conditions}. 
Then, in {\bf any} asymptotic coordinate system,  its mass  is  given by 
$${\zap m}(M,g) = \lim_{\varrho\to \infty} \frac{1}{2(2m-1) \pi^m} \int_{S_\varrho/\Gamma} \theta \wedge \omega^{m-1} $$
for any  $1$-form $\theta$ with $d\theta = \rho$ 
on the end $M_\infty$, where $\rho$ is the Ricci form of $g$. 
\end{prop}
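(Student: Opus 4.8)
The plan is to prove the identity first in the adapted complex coordinates supplied by Lemma \ref{standard}, and then to promote it to an arbitrary asymptotic system by a separate coordinate-invariance argument. So first I would invoke Lemma \ref{standard} to pass to complex coordinates $(z^1,\dots,z^m)$ in which $J$ is standard and $g_{j\bar{k}}=\delta_{jk}+h_{j\bar{k}}$ with $h=O(\varrho^{1-m-\varepsilon})$ and $\partial h=O(\varrho^{-m-\varepsilon})$; a constant orthogonal change brings any other condition-(\ref{zwei}) chart into this normal form, which is legitimate because the ADM integrand is invariant under Euclidean rotations and because, for $m\geq 3$, Lemma \ref{nostalgia} forces $J$ to approach a constant $\delta$-compatible structure at infinity in every such chart. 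Throughout I would take the explicit primitive $\theta=\tfrac{i}{2}(\partial-\bar\partial)\varphi$ with $\varphi=\log\det(g_{p\bar{q}})$, so that $d\theta=-i\partial\bar\partial\varphi=\rho$.

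Before the main computation I would dispose of two soft points. First, the right-hand side is independent of the choice of $\theta$: since $m\geq 3$, the end is homotopy equivalent to $S^{2m-1}/\Gamma$, so $H^1(M_\infty;\RR)=0$, and any two primitives of $\rho$ differ by some $df$; as $\omega$ is closed, $df\wedge\omega^{m-1}=d(f\,\omega^{m-1})$ integrates to zero over the closed hypersurface $S_\varrho/\Gamma$. Second — and this is what produces the ``any coordinate system'' clause, together with an independent proof of coordinate-invariance — the limit is independent of the exhaustion. Indeed $d(\theta\wedge\omega^{m-1})=\rho\wedge\omega^{m-1}=\tfrac{(m-1)!}{2}\,s\,d\mu_g$, which lies in $L^1$ by hypothesis (\ref{eins}); so for any two large coordinate spheres Stokes' theorem equates the difference of the two flux integrals with the integral of $s\,d\mu_g$ over the region between them, and this tends to $0$ as the spheres recede to infinity. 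Hence the value of the right-hand side depends on neither $\theta$ nor the chart.

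The heart of the matter is then the leading-order matching of the two flux integrands in normal form. I would first discard the genuinely negligible pieces: replacing $\theta$ by its linearization $\theta_{\mathrm{lin}}=\tfrac{i}{2}(\partial-\bar\partial)(\tr h)$ and $\omega^{m-1}$ by $\omega_0^{m-1}$ changes the integrand by terms of pointwise size $O(\varrho^{1-2m-2\varepsilon})$, which integrate to $O(\varrho^{-2\varepsilon})\to 0$ over $S_\varrho$. It then remains to compare the purely linear $(2m-1)$-form $\theta_{\mathrm{lin}}\wedge\omega_0^{m-1}$ with the ADM integrand $(g_{k\ell,k}-g_{kk,\ell})\mathbf{n}^\ell\,d\mathfrak{a}_E=\star_0(\mathrm{div}_E\,h-d\,\tr_E h)$. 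Two algebraic facts drive this. The first is the pointwise Kähler identity $\alpha\wedge\omega_0^{m-1}=(m-1)!\,\star_0(J_0\alpha)$ for real $1$-forms $\alpha$, which already supplies the factor $(m-1)!$ appearing in the statement and turns $\theta_{\mathrm{lin}}\wedge\omega_0^{m-1}$ into a constant multiple of $\star_0\,d(\tr_E h)$. The second is the linearized Kähler condition $\sum_j\partial_{\bar j}h_{i\bar j}=\partial_{\bar i}(\sum_j h_{j\bar j})$, which says that $\mathrm{div}_E\,h$ and $d\,\tr_E h$ agree modulo terms that the Hodge star carries into exact forms; this is exactly what collapses the two-term ADM expression into the single trace-gradient term produced by $\theta$. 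Matching the constants and fixing all signs then gives $\theta_{\mathrm{lin}}\wedge\omega_0^{m-1}=\tfrac{(m-1)!}{2}\,(g_{k\ell,k}-g_{kk,\ell})\mathbf{n}^\ell\,d\mathfrak{a}_E$ modulo exact forms and $o(1)$ terms, which upon integration and insertion of the normalization constant yields the asserted formula.

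I expect the main obstacle to be precisely this last matching step, for two intertwined reasons. The individual flux integrals are only conditionally convergent — both integrands are of borderline size $O(\varrho^{-m-\varepsilon})$, so naive termwise estimates diverge, and the argument must be organized so that every genuinely divergent contribution is either manifestly exact on the closed sphere or is absorbed into the $L^1$ curvature term via Stokes. The second difficulty is pure bookkeeping but unforgiving: translating between the Hermitian data $h_{j\bar{k}}$ and the real ADM quantities, while keeping track of the Kähler identity, the $1$-form Hodge identity, and the definition of $\theta$, must reproduce the exact constant $\tfrac{(m-1)!}{2}$. I would cross-check this constant against the normalization fixed in the Appendix by evaluating both sides on the model perturbation coming from a single radial Kähler potential on $\CC^m$.
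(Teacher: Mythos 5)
Your core computation is essentially the paper's own argument: work in the complex coordinates supplied by Lemma \ref{standard}, take the explicit primitive $\theta = \frac{i}{2}(\partial - \bar{\partial})\log\det g$, collapse the two-term ADM integrand to a single trace-gradient term using the K\"ahler structure, and then dispose of the choice of $\theta$ (via $H^1(M_\infty;\RR)=0$) and of the exhaustion (via Stokes and $s\in L^1$). Your ``linearized K\"ahler identity'' plays the role of the paper's cleaner, exact observation that holomorphic coordinates on a K\"ahler manifold are $g$-harmonic, so that $g^{jk}\mathbf{\Gamma}^\ell_{jk}=0$ identically and the ADM integrand equals $-\!\left(\log\sqrt{\det g}\right)_{,\ell}$ up to negligible errors; up to that substitution and some index bookkeeping, the matching step is the same.

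The genuine gap is in how you obtain the ``in \textbf{any} asymptotic coordinate system'' clause, and neither of your two arguments closes it. First, it is false that an arbitrary condition-(\ref{zwei}) chart is carried to the normal form of Lemma \ref{standard} by a constant orthogonal change: the proof of that lemma shows the two charts differ by an asymptotically rigid motion whose decaying part $w^j$ lies in $C^{2,\varepsilon}_{2-m-\varepsilon}$ and is in general nonzero. The effect of this decaying part on the ADM surface integral is not manifestly negligible --- the pointwise change in the integrand is only $O(\varrho^{-m-\varepsilon})$, which against the sphere area $\sim\varrho^{2m-1}$ gives the divergent naive bound $O(\varrho^{m-1-\varepsilon})$; killing it requires recognizing that the leading change has the form $\partial_k\!\left(\partial_j w^k - \partial_k w^j\right)\mathbf{n}^j$, the flux of the divergence of an antisymmetric tensor, whose surface integral vanishes because it is exact. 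That divergence-structure argument (plus control of the nonlinear remainders) is precisely the content of the Bartnik--Chru\'sciel coordinate-invariance theorem; it is not supplied by Lemma \ref{nostalgia}, which only concerns $J$. Second, your exhaustion-independence argument controls only the \emph{right}-hand side: it shows the limit of $\int\theta\wedge\omega^{m-1}$ is independent of chart and exhaustion, but says nothing about the ADM integral computed in a different chart, which is a different integrand built from the metric components of that chart. So it cannot ``produce the any-coordinate-system clause,'' and it is not an independent proof of coordinate-invariance of the mass. The paper is candid about exactly this point: its proof of Proposition \ref{epigenome} explicitly \emph{takes Bartnik--Chru\'sciel as given} to identify the left-hand sides across charts, and only later, in Section \ref{homerun} (Proposition \ref{keystep} and Theorem \ref{basso}), does it establish invariance independently --- by a substantially harder computation valid in charts where $J$ is not standard, in which the extra co-exact correction terms $\gamma$ and $\gimel$ appear. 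You should either cite Bartnik--Chru\'sciel for the left-hand side, as the paper does here, or carry out that chart-by-chart matching; as written, your proof only establishes the formula in the normal-form chart.
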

\begin{proof} Taking the Bartnik-Chru\'{s}ciel coordinate invariance  of the mass  \cite{bartnik,admchrus}  as given,  we will begin   
 by first checking  that the assertion is true in a particular asymptotic coordinate system and for a particular choice of $\theta$. 

Because $g$ is K\"ahler, the  asymptotic  complex coordinates $(z^1, \ldots , z^m)$ of Lemma \ref{standard} are all harmonic, 
and the same therefore applies to the real coordinates $(x^1, \ldots , x^n)$  obtained  by 
taking their real and imaginary parts. Thus 
$$\boldsymbol{\Gamma}^\ell:= g^{jk} \boldsymbol{\Gamma}^\ell_{jk}=  \Delta x^\ell=0,$$
so that 
$$g^{jk}\left( g_{ji,k}-\frac{1}{2}g_{jk,i}\right) =0$$
and 
$$
 g^{jk}\left(g_{j\ell , k} -  g_{jk,\ell}\right) = - \frac{1}{2}  g^{jk}g_{jk,\ell} = - \left(\log \sqrt{\det g}\right)_{,\ell }
$$ 
in this asymptotic coordinate system. On the other hand, our fall-off conditions guarantee that 
$$
g^{jk}\left(g_{j\ell , k} -  g_{jk,\ell}\right)= 
[\delta^{jk}+ O(\varrho^{1-m-\varepsilon})]\left(g_{j\ell , k} -  g_{jk,\ell } \right)= g_{i\ell,i}- g_{ii,\ell} + O(\varrho^{1-2m-2\varepsilon}), 
$$
and that the Hodge star operators associated with $g$ and $\delta$ differ by $O(\varrho^{1-m-\varepsilon})$. 
Thus 
$$\int_{S_\varrho/\Gamma} \left[ g_{ij,i} -g_{ii,j}\right] \mathbf{n}^jd\mathfrak{a}_E = -\int_{S_\varrho/\Gamma} \star \, d \log \sqrt{\det g}  \quad + O(\varrho^{-2\epsilon})$$
in these coordinates, and the mass is therefore given by  
$${\zap m}(M,g)= -\lim_{\varrho\to \infty}   \frac{(m-1)!}{4(2m-1)\pi^m} 
\int_{S_\varrho/\Gamma} \star \, d \log \sqrt{\det g}~.$$
However, the K\"ahler condition allows us to rewrite the integrand as 
$$\star \, d\log \sqrt{\det g} = \left[ -i(\partial - \bar{\partial}) \log \frac{\omega^m}{|dz|^{2m}}\right]\wedge \frac{\omega^{m-1}}{(m-1)!}$$
and since 
$$d \left[\frac{i}{2} (\partial - \bar{\partial}) \log \frac{\omega^m}{|dz|^{2m}}\right] = -  i\partial \bar{\partial} 
\log \frac{\omega^m}{|dz|^{2m}} =  \rho$$ 
on our K\"ahler manifold, we therefore have 
$${\zap m} (M,g) = \lim_{\varrho \to \infty} \frac{1}{2(2m-1) \pi^m} \int_{S_\varrho/\Gamma} \theta \wedge \omega^{m-1}$$
for a {\em particular} $1$-form 
$$\theta = \frac{i}{2} (\partial - \bar{\partial}) \log \frac{\omega^m}{|dz|^{2m}}
$$ 
with $d\theta = \rho$ on the end  $M_\infty$. 

On the other hand, since
$b_1(M_\infty )=0$, the most general $1$-form $\tilde{\theta}$ on $M_\infty$ with $d\tilde{\theta} = \rho$ is given 
by $\tilde{\theta} = \theta + df$ for a function $f$. Choosing a different $\theta$ would 
thus change the integrand by an exact form, and  so leave the integral on each $S_\varrho/\Gamma$
completely  unchanged. 

Finally, the limit is independent of the asymptotic coordinate system.
Indeed, notice that 
$$d \left[ \theta \wedge \omega^{m-1}\right] = \rho \wedge \omega^{m-1} =  \frac{s}{2m} \omega^m = {\textstyle\frac{(m-1)!}{2}} s~d\mu.$$
Consequently, if $\mathscr{S}$ is a real hypersuface  in the region $E_\varrho$ exterior to $S_\varrho/\Gamma$
such that $S_\varrho/\Gamma$ and $\mathscr{S}$ are the boundary components of a bounded region
$\mathscr{V} \subset E_\varrho$, then 
$${\textstyle \frac{2}{(m-1)!} }\left| 
\int_\mathscr{S} \theta\wedge \omega^{m-1} - \int_{S_\varrho/\Gamma} \theta\wedge \omega^{m-1}
\right| = 
\left| \int_\mathscr{V} s~d\mu \right| 
\leq \int_\mathscr{V} |s|~d\mu \leq \int_{E_\varrho} |s|~d\mu ,$$
and the expression at the far right tends to zero as $\varrho \to \infty$, since, by hypothesis, 
  the scalar curvature
$s$ belongs to $L^1$. 
\end{proof}

\begin{rmk} 
If the metric $g$ is {\em  scalar-flat} K\"ahler, the form $\theta\wedge \omega^{m-1}$
is actually closed, so the integral becomes  independent of the radius $\varrho$, and the mass can 
 be calculated without the need for taking a limit. 
 
 When $m=2$, 
the above argument still works if one simply {\em assumes} that there is an asymptotic chart  
in which $J$ is standard and $g$ falls off as in Lemma \ref{standard}. 
While this assumption does  in fact hold for many interesting examples, it unfortunately fails for the general ALE K\"ahler surface. 
This complication will force us to  develop  a more flexible approach  in order to be able to  definitively treat the  complex-surface case. 
 \end{rmk}

We now provide some key conceptual underpinning for our mass formula.

\begin{lem}\label{legit} 
 Let $(M,g)$ be any ALE manifold of real dimension $n\geq 4$. Then 
the natural map $H^2_c(M) \to H^2_{dR}(M)$ from compactly supported cohomology to ordinary de Rham cohomology is an isomorphism. Consequently,  every element of 
$H^2(M)$ is represented by a unique $L^2$ harmonic  $2$-form. 
\end{lem}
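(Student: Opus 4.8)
The plan is to establish the statement in two parts, matching the two sentences of the lemma. The first sentence is a topological claim about an ALE manifold, while the second is an analytic existence-and-uniqueness statement, so I would prove them in that order since the Hodge-theoretic statement presupposes the identification $H^2_c\cong H^2_{dR}$.

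For the isomorphism $H^2_c(M)\to H^2_{dR}(M)$, I would use the ends structure. Writing $M=\mathbf{K}\cup\bigcup_j M_\infty^{(j)}$, each end $M_\infty^{(j)}$ is diffeomorphic to $(\RR^n-\mathbf{D}^n)/\Gamma_j$, which deformation retracts onto the space form $S^{n-1}/\Gamma_j$. The key input is that, because $\Gamma_j\subset\mathbf{O}(n)$ acts freely on $S^{n-1}$ and $n\geq 4$, one has $H^1(S^{n-1}/\Gamma_j;\RR)=H^2(S^{n-1}/\Gamma_j;\RR)=0$: indeed $S^{n-1}$ is simply connected for $n\geq 3$, so $\pi_1(S^{n-1}/\Gamma_j)=\Gamma_j$ is finite, forcing $H^1=0$, and $H^2(S^{n-1}/\Gamma_j;\RR)\cong H^2(\Gamma_j;\RR)=0$ since the cohomology of a finite group with $\RR$ coefficients vanishes in positive degree. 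The natural comparison map fits into the long exact sequence of the pair $(M,M-\mathbf{K})$, and using $H^k_c(M)\cong \varinjlim H^k(M,M-\mathbf{K}')$ together with excision, the obstruction to $H^2_c\to H^2$ being an isomorphism is exactly controlled by $H^1$ and $H^2$ of the boundary at infinity, i.e. by $\bigoplus_j H^k(S^{n-1}/\Gamma_j;\RR)$ for $k=1,2$. Since these vanish, the five-lemma forces the map to be an isomorphism.

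For the second sentence, I would invoke the standard $L^2$ Hodge theory on ALE (more generally, asymptotically conical) manifolds. On such a manifold the space of $L^2$ harmonic $2$-forms $\mathcal{H}^2_{L^2}$ is finite-dimensional, and the relevant result (due to de Rham--Kodaira in the complete setting, sharpened on ALE manifolds in work on the $L^2$-cohomology of manifolds with conical ends) identifies $\mathcal{H}^2_{L^2}$ with the image of the map $H^2_c(M;\RR)\to H^2_{dR}(M;\RR)$. Concretely, the fall-off rate of the end makes $2$-forms square-integrable precisely in the middle range, and for $k=2$ with $n\geq 4$ the reduced and unreduced $L^2$-cohomology in degree $2$ coincide with this image. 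Since the first sentence shows that this image is all of $H^2_{dR}(M)$, every de Rham class is represented by an $L^2$ harmonic $2$-form; uniqueness follows from the fact that a harmonic $L^2$ form that is also exact must vanish, again a consequence of the $L^2$ Hodge decomposition on a complete manifold of this type.

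The main obstacle I anticipate is the second sentence rather than the first: the topological isomorphism is a clean Mayer--Vietoris/long-exact-sequence computation once the vanishing $H^1=H^2=0$ of the lens-like boundary is in hand, but matching $L^2$ harmonic representatives to ordinary de Rham classes requires care about which decay regime makes a closed-and-coclosed $2$-form integrable and about the coincidence of reduced with unreduced $L^2$-cohomology. I would therefore lean on a citation to the established $L^2$-Hodge theory for asymptotically conical (ALE) ends to supply the finite-dimensionality and the identification $\mathcal{H}^2_{L^2}\cong\operatorname{im}(H^2_c\to H^2_{dR})$, and spend the bulk of the written argument on the topological isomorphism, after which the Hodge-theoretic conclusion is immediate.
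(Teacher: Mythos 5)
Your proposal is correct and follows essentially the same route as the paper: there, too, the isomorphism is read off from the long exact cohomology sequence of the pair obtained by attaching the space forms $S^{n-1}/\Gamma_j$ at infinity, using the vanishing of $H^1$ and $H^2$ of these space forms (which the paper gets from the transfer-type fact that de Rham cohomology injects into that of a finite cover, rather than from group cohomology as you do, the latter implicitly requiring the $2$-connectedness of $S^{n-1}$, i.e.\ $n\geq 4$), and the second sentence is likewise handled by citing the $L^2$-Hodge theory for asymptotically conical metrics (Carron, Hausel--Hunsicker--Mazzeo). These are only cosmetic variations on the same argument, so no further comparison is needed.
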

\begin{proof} We can compactify $M$ as a manifold-with-boundary $\overline{M}$ by adding a copy of $S^{n-1}/\Gamma_j$ to each end. 
The natural map in question therefore fits into an exact sequence 
$$\cdots \to H^1_{dR} (\cup_i[S^{n-1}/\Gamma_i] )\to H^2_c  (M) \to H^2_{dR}  (M) \to 
H^2_{dR}  (\cup_i[S^{n-1}/\Gamma_i])\to \cdots $$
corresponding to the exact cohomology sequence of the pair $(\overline{M}, \partial M)$. 
 On the other hand, since
de Rham cohomology injects upon    passing to  a finite cover, we have 
$H^k_{dR} (S^{n-1}/\Gamma_i ) \subset H^k_{dR}(S^{n-1})= 0$ when $0<k< n-1$. It therefore follows that 
 $H^2_c  (M)\to H^2_{dR}  (M)$ is an isomorphism. Moreover, since $g$ is asymptotically conical, 
this in turn implies \cite{carpar,hahuma} that 
$H^2(M)$ can be identified  with  the space ${\mathcal H}^2_2(M,g)$ of 
$L^2$ harmonic $2$-forms on $(M,g)$. 
\end{proof} 

This  entitles us  to make  the following definition: 
\begin{defn} \label{nutshell} 
If $(M,g,J)$ is any ALE K\"ahler manifold, we will use 
$$\clubsuit : H^2_{dR}(M) \to H^2_c(M)$$
to denote the {\bf inverse} of the natural map $H^2_c(M)\to H^2_{dR}(M)$. 
\end{defn}

We are now ready to state and prove our mass formula.

\begin{thm} \label{gist}
  The mass of any  ALE K\"ahler manifold  $(M,g,J)$ of complex dimension $m\geq 3$ is 
given by the formula
$${\zap m}(M,g) = - \frac{\langle \clubsuit (c_1) , [\omega ]^{m-1}\rangle}{(2m-1)\pi^{m-1}} +
\frac{(m-1)!}{4(2m-1)\pi^m} \int_M s ~d\mu$$
where  $\langle~, ~\rangle$ is the duality pairing between $H^2_c(M)$ and 
$H^{2m-2}(M)$.
\end{thm}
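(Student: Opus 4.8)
The plan is to combine Proposition~\ref{epigenome} with the topological machinery of Lemma~\ref{legit} and Definition~\ref{nutshell}. Proposition~\ref{epigenome} already expresses the mass as a boundary integral $\lim_{\varrho\to\infty}\frac{1}{2(2m-1)\pi^m}\int_{S_\varrho/\Gamma}\theta\wedge\omega^{m-1}$, where $d\theta=\rho$ on the end. The core task is therefore to reinterpret this flux-at-infinity as the pairing $-\langle\clubsuit(c_1),[\omega]^{m-1}\rangle/((2m-1)\pi^{m-1})$ plus a correction built from $\int_M s\,d\mu$, using the fact that $[\rho/2\pi]=c_1$.

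First I would work on the compactified orbifold $(X,J_X)$ supplied by Lemma~\ref{compactify}, where $M=X-\Sigma$ with $\Sigma=\CP_{m-1}/\Gamma$ the divisor at infinity. The plan is to fix a global closed $2$-form $\varrho_0$ on $X$ representing $2\pi c_1(X,J_X)$ (for instance the Ricci form of the orbifold K\"ahler metric $\hat g$ from Lemma~\ref{compactify}). On the end, $\rho-\varrho_0$ is exact, say $=d\eta$ for a globally defined $1$-form $\eta$ on $M_\infty$; combined with $d\theta=\rho$, this lets me write $\theta=\sigma+\eta$ on the end for a primitive $\sigma$ of $\varrho_0$. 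The key computation is then to match $\int_{S_\varrho/\Gamma}\theta\wedge\omega^{m-1}$ against the compactly supported cup product. Concretely, $\clubsuit(c_1)\in H^2_c(M)$ is represented by a form equal to $\rho/2\pi$ near infinity but compactly supported after subtracting an exact correction; pairing it with $[\omega]^{m-1}$ and applying Stokes on the region between a large sphere and $\Sigma$ should convert the bulk pairing into precisely the boundary flux at $S_\varrho/\Gamma$. The scalar-curvature term arises exactly as in Proposition~\ref{epigenome}, from $d(\theta\wedge\omega^{m-1})=\rho\wedge\omega^{m-1}=\frac{(m-1)!}{2}s\,d\mu$, which accounts for the discrepancy between the flux at radius $\varrho$ and the honest cohomological pairing over all of $M$.

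\textbf{The main obstacle} I expect is the careful bookkeeping of the identification $H^2_c(M)\cong H^{2m-2}(M)^*$ in the duality pairing, and ensuring that the representative of $\clubsuit(c_1)$ is chosen so that its restriction to the end agrees with $\rho/2\pi$ up to an \emph{exact} form whose primitive is controlled well enough for Stokes to apply in the limit $\varrho\to\infty$. One must verify that the contribution near $\Sigma$ — equivalently, the flux of $\sigma\wedge\omega^{m-1}$ across $S_\varrho/\Gamma$ — reproduces exactly $-\langle\clubsuit(c_1),[\omega]^{m-1}\rangle$ with the correct sign and normalization constant $1/((2m-1)\pi^{m-1})$, rather than the seemingly different constant $1/(2(2m-1)\pi^m)$ in Proposition~\ref{epigenome}; reconciling these requires keeping track of the factor of $2\pi$ from $[\rho]=2\pi c_1$. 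The $L^1$ hypothesis on $s$ is precisely what guarantees convergence of the remaining bulk integral and legitimizes passing the Stokes argument to the limit, exactly as the closing estimate of Proposition~\ref{epigenome} already demonstrates.

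Finally, I would assemble the pieces: Proposition~\ref{epigenome} gives the boundary-flux expression in coordinates where Lemma~\ref{standard} applies; the topological reinterpretation identifies the coordinate-dependent-looking flux with the sum of the cohomological term $-\langle\clubsuit(c_1),[\omega]^{m-1}\rangle/((2m-1)\pi^{m-1})$ and the convergent bulk integral $\frac{(m-1)!}{4(2m-1)\pi^m}\int_M s\,d\mu$. Since Proposition~\ref{epigenome} was already established in \emph{any} asymptotic coordinate system, the resulting formula is manifestly coordinate-invariant, which completes the proof of the $m\geq 3$ case.
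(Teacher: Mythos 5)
Your proposal is correct and follows essentially the same route as the paper's proof: Proposition~\ref{epigenome} supplies the flux-at-infinity formula, and the pairing $\langle \clubsuit (c_1), [\omega]^{m-1}\rangle$ is evaluated by replacing $\rho$ with a compactly supported representative of $2\pi\clubsuit(c_1)$ of the form $\rho - d(f\theta)$, applying Stokes together with the identity $\rho\wedge\omega^{m-1} = \frac{(m-1)!}{2}\, s\, d\mu$, and letting $\varrho\to\infty$ using $s\in L^1$. Two minor remarks: your detour through the compactification $(X,J_X)$ and the decomposition $\theta = \sigma + \eta$ is never actually used in the computation you describe (the paper works entirely on $M$), and you should cite Proposition~\ref{class2} explicitly, since the one-endedness of $M$ is precisely what makes the corrected representative compactly supported and what lets a single end's flux account for the whole cohomological pairing.
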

\begin{proof} 
Choose some $1$-form  $\theta$ on  $M_\infty$
such that  $d\theta= \rho$, where  $\rho$ is  the Ricci form of $(M,g,J)$. 
Next, choose some asymptotic coordinate system on $M_\infty$,
and  
 temporarily let   $\mathfrak{r}$  denote  the corresponding coordinate radius on  $M_\infty$. 
Finally,  choose a smooth cut-off function
$f:M\to [0,1]$   which is $\equiv 0$ 
 on $M-M_\infty$ and $\equiv 1$ for $\mathfrak{r} \geq \rad$, 
where  $\rad$ is some fixed large real number. We then  set  $\psi := \rho - d(f \theta)$. Since $M$ only has one end $M_\infty$  by Proposition \ref{class2}, this  
$\psi$ is then a compactly supported closed $2$-form on $M$. Since  $\psi$ is moreover cohomologous to $\rho$, it therefore  represents 
$\clubsuit ([\rho ]) = 
2\pi \clubsuit (c_1)$ in compactly supported cohomology.

  For any $\varrho >\rad$, we now
   let  $M_\varrho \subset M$ be the compact manifold-with-boundary obtained  by removing 
$\mathfrak{r} > \varrho$ from $M$, so that  $\partial M_\varrho = S_\varrho/\Gamma$.
Since $$\rho \wedge \omega^{m-1}= \frac{s}{2m} \omega^m=\frac{(m-1)!}{2} s~d\mu_g ,$$
 we have 
$$\frac{(m-1)!}{2}\int_{M_\varrho}s~d\mu= \int_{M_\varrho} \rho \wedge \omega^{m-1} = \int_{M_\varrho}  [\psi + d (f\theta) ]\wedge \omega^{m-1} ~.
$$
It follows  that 
\begin{eqnarray*}
2\pi  \langle \clubsuit (c_1) ,  [\omega ]^{m-1}\rangle &=& \int_M \psi \wedge \omega^{m-1} =   \int_{M_\varrho} \psi \wedge \omega^{m-1}\\
 &=& -  \int_{M_\varrho} d(f\theta \wedge \omega^{m-1} )  + \frac{(m-1)!}{2}\int_{M_\varrho}s~d\mu
 \\&=&   - \int_{\partial {M_\varrho}} f\theta \wedge \omega^{m-1} +\frac{(m-1)!}{2}\int_{M_\varrho}s~d\mu
 \\&=& - \int_{S_\varrho/\Gamma} \theta \wedge \omega^{m-1} +\frac{(m-1)!}{2}\int_{M_\varrho}s~d\mu~.
\end{eqnarray*}
In other words,  
$$
\frac{1}{2(2m-1)\pi^m}\int_{S_\varrho/\Gamma} \theta \wedge \omega^{m-1} = -\frac{ \langle \clubsuit (c_1) ,  [\omega ]^{m-1}\rangle}{(2m-1)\pi^{m-1}} + \frac{(m-1)!}{4(2m-1)\pi^m}\int_{M_\varrho}s~d\mu.
$$
Taking the limit of both sides as $\varrho \to \infty$  therefore yields 
$$ {\zap m} (M, g) = 
-\frac{ \langle \clubsuit (c_1) ,  [\omega ]\rangle}{(2m-1)\pi^{m-1}} + \frac{(m-1)!}{4(2m-1)\pi^m}\int_{M}s~d\mu 
$$
by Proposition \ref{epigenome}. This proves the desired mass formula. 
\end{proof}

Specializing to the scalar-flat case, we  now obtain  the high-dimensional version of Theorem \ref{alpha}:

\begin{thm} \label{atlas} 
If $(M^{2m},g,J)$ is any 
 ALE {\em scalar-flat}  K\"ahler $m$-manifold, $m\geq 3$, its mass is 
given by 
$${\zap m}(M,g) = - \frac{\langle \clubsuit (c_1) , [\omega ]^{m-1}\rangle}{(2m-1)\pi^{m-1}} . $$
In particular, the mass is a topological invariant in this context, entirely
determined by the smooth manifold $M$,  the first Chern class of 
the complex structure  and the K\"ahler class  of the metric. 
\end{thm}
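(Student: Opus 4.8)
The plan is to obtain this statement as an immediate specialization of the general mass formula already established in Theorem \ref{gist}. Since $(M,g,J)$ is by hypothesis an ALE K\"ahler manifold of complex dimension $m\geq 3$, the only thing I would first check is that it satisfies the standing hypotheses of Theorem \ref{gist}: condition (\ref{zwei}) is part of our default definition of an ALE manifold, and condition (\ref{eins}) holds trivially in the scalar-flat case, because $s\equiv 0$ is certainly integrable. With the hypotheses verified, Theorem \ref{gist} applies and yields
$${\zap m}(M,g) = -\frac{\langle \clubsuit (c_1), [\omega]^{m-1}\rangle}{(2m-1)\pi^{m-1}} + \frac{(m-1)!}{4(2m-1)\pi^m}\int_M s~d\mu.$$
The scalar-flat hypothesis is precisely the statement that $s\equiv 0$, so the second term vanishes identically, and what remains is exactly the asserted formula. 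This disposes of the first sentence of the theorem.

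Next I would address the claim of topological invariance by tracing the provenance of each factor on the right-hand side. The class $c_1 = c_1(M,J)\in H^2_{dR}(M)$ is by definition the first Chern class of the complex structure. By Definition \ref{nutshell}, the map $\clubsuit\colon H^2_{dR}(M)\to H^2_c(M)$ is the inverse of the natural inclusion-induced map $H^2_c(M)\to H^2_{dR}(M)$, which Lemma \ref{legit} showed to be an isomorphism for any ALE manifold; in particular $\clubsuit$ is a purely topological gadget, depending only on the smooth manifold $M$. Hence $\clubsuit(c_1)\in H^2_c(M)$ is determined by $M$ together with $c_1$. Similarly, $[\omega]^{m-1}\in H^{2m-2}(M)$ is the $(m-1)$-st cup power of the K\"ahler class $[\omega]\in H^2(M)$ of the metric, and $\langle~,~\rangle$ is nothing more than the topological duality pairing between $H^2_c(M)$ and $H^{2m-2}(M)$. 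It follows that the entire quantity ${\zap m}(M,g)$ is manifestly a function of the smooth manifold $M$, the first Chern class $c_1$, and the K\"ahler class $[\omega]$ alone, as claimed.

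Since all of the genuine analytic work, including the coordinate-invariance established in Proposition \ref{epigenome} and the cohomological bookkeeping behind $\clubsuit$, has already been carried out in proving Theorem \ref{gist} and Lemma \ref{legit}, I do not expect any real obstacle here: the result is a direct corollary. The only point demanding even minor attention is the verification that the scalar-flat hypothesis is compatible with, rather than an additional burden beyond, the hypotheses of Theorem \ref{gist}, which is immediate from $s\in L^1$ for $s\equiv 0$.
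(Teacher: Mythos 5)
Your proposal is correct and is exactly the paper's own argument: the paper presents Theorem \ref{atlas} as an immediate specialization of Theorem \ref{gist}, obtained by setting $s\equiv 0$ so that the scalar curvature integral drops out, with the topological invariance then evident from the cohomological nature of the remaining term. Your additional checks (that $s\equiv 0$ trivially satisfies condition (\ref{eins}), and that $\clubsuit$ is purely topological via Lemma \ref{legit}) are sound and merely make explicit what the paper leaves implicit.
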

 
\bigskip 

We now conclude our discussion of the high-dimensional case by pointing out some other useful consequences of Lemma \ref{compactify}. 
\begin{lem} \label{vanishing}
The orbifold  $(X,J_X)$  of Lemma \ref{compactify} satisfies $H^1(X, {\mathcal O})=0$. 
\end{lem}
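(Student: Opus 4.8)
We must show that the orbifold compactification $(X,J_X)$ of Lemma \ref{compactify} satisfies $H^1(X,\mathcal{O})=0$.

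The plan is to exploit the fact, established in Lemma \ref{compactify}, that $X$ carries an orbifold K\"ahler metric, so that orbifold Hodge theory applies and gives a Hodge decomposition $H^1(X,\CC)\cong H^{1,0}(X)\oplus H^{0,1}(X)$ with $H^{0,1}(X)\cong H^1(X,\mathcal{O})$ by the orbifold Dolbeault isomorphism. Thus it suffices to prove that the first \emph{Betti} number of $X$ vanishes, since this forces $\dim_\CC H^1(X,\mathcal{O})=\tfrac12 b_1(X)=0$. So I would reduce the problem to the purely topological claim $b_1(X)=0$, i.e. $H^1(X,\RR)=0$.

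To compute $b_1(X)$, I would compare the topology of $X$ with that of $M$. By construction $X$ is obtained from $M$ by gluing in a copy of $\CP_{m-1}/\Gamma$ along the (single, by Proposition \ref{class2}) end; equivalently, $X=\overline{M}\cup_{\partial}(\text{neighborhood of }\CP_{m-1}/\Gamma)$, where $\overline{M}$ is the compact manifold-with-boundary of Lemma \ref{legit}, whose boundary is $S^{2m-1}/\Gamma$. The first homology of the compactified piece is controlled by $\CP_{m-1}/\Gamma$, which has $b_1=0$ because $H^1(\CP_{m-1}/\Gamma,\RR)\subset H^1(\CP_{m-1},\RR)=0$ by the injectivity of rational cohomology under passage to a finite cover (the same principle already invoked in the proof of Lemma \ref{legit}). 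On the $M$ side, the ALE hypotheses give strong control: one knows $H^1(M,\RR)$ injects into $H^1$ of the end, and $H^1(S^{2m-1}/\Gamma,\RR)\subset H^1(S^{2m-1},\RR)=0$ for $m\geq 3$, so the end is a rational homology sphere in degree $1$. I would then run the Mayer--Vietoris sequence for the decomposition $X=\overline{M}\cup(\text{collar neighborhood of }\CP_{m-1}/\Gamma)$, whose overlap deformation-retracts onto $S^{2m-1}/\Gamma$:
$$
H^1(S^{2m-1}/\Gamma,\RR)\to H^1(\overline{M},\RR)\oplus H^1(\CP_{m-1}/\Gamma,\RR)\to H^1(X,\RR)\to H^2(S^{2m-1}/\Gamma,\RR).
$$
Since $H^1$ and $H^2$ of $S^{2m-1}/\Gamma$ both vanish for $m\geq 3$ (they inject into the corresponding groups of $S^{2m-1}$, which are zero in this range), and since $H^1(\overline{M},\RR)\cong H^1(M,\RR)$ with $H^1(\CP_{m-1}/\Gamma,\RR)=0$, the sequence forces $H^1(X,\RR)\cong H^1(M,\RR)$. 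Finally, $H^1(M,\RR)=0$: by Poincar\'e--Lefschetz duality on the ALE manifold $M$ together with Lemma \ref{legit}, the degree-one cohomology is forced to vanish because $M$ is simply connected at infinity and the relevant boundary groups vanish for $m\geq 3$; alternatively, $b_1(M)=0$ follows directly since any harmonic $1$-form would have to be $L^2$ and decaying, hence parallel and then zero on a manifold with an ALE end.

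The main obstacle I anticipate is being careful with the orbifold points: one must confirm that orbifold Hodge theory (the Hodge decomposition and the $\partial\bar\partial$-lemma) is genuinely available on $X$, which is exactly what Lemma \ref{compactify} supplies by producing an orbifold K\"ahler metric, and that the Betti numbers appearing in Mayer--Vietoris are the \emph{orbifold} (equivalently, underlying topological) ones. The topological bookkeeping with the finite groups $\Gamma_j$ requires the repeatedly-used fact that rational cohomology injects under finite covers, which kills $H^1$ and $H^2$ of every lens-type boundary and of $\CP_{m-1}/\Gamma$ in the relevant degrees for $m\geq 3$; this is precisely the dimensional restriction that makes the argument clean. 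Once $b_1(X)=0$ is in hand, the Hodge-theoretic conclusion $H^1(X,\mathcal{O})=0$ is immediate.
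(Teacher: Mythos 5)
Your reduction is fine as far as it goes: orbifold Hodge theory on the K\"ahler orbifold $X$ gives $\dim_{\CC} H^1(X,\mathcal{O})=\tfrac12 b_1(X)$, and your Mayer--Vietoris computation correctly shows $H^1(X,\RR)\cong H^1(M,\RR)$, since $H^1$ and $H^2$ of $S^{2m-1}/\Gamma$ and $H^1$ of $\CP_{m-1}/\Gamma$ all vanish. But at that point you have merely restated the problem: proving $H^1(M,\RR)=0$ is exactly as hard as the lemma itself, and neither of the two arguments you sketch for it works. The first (Poincar\'e--Lefschetz duality, Lemma \ref{legit}, simple connectivity at infinity) invokes only \emph{topological} properties of an ALE manifold, and these cannot force $b_1=0$: the manifold $(S^1\times S^{2m-1})$ minus a point carries AE metrics, is simply connected at infinity, and satisfies the conclusion of Lemma \ref{legit} (both $H^2_c$ and $H^2$ vanish), yet has $b_1=1$. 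Any correct proof must use the K\"ahler/complex structure in an essential way, precisely because such smooth ALE manifolds exist but admit no ALE K\"ahler metric. The second argument fails at the step ``hence parallel'': passing from an $L^2$ harmonic $1$-form to a parallel one is a Bochner argument requiring a sign condition on the Ricci curvature, and no curvature hypothesis is available here. (Moreover, identifying $H^1(M,\RR)$ with $L^2$ harmonic $1$-forms needs a degree-one analogue of Lemma \ref{legit}, which the paper proves only in degree two.) One \emph{can} salvage this analytic route, but only with substantially more machinery: the $L^2$ K\"ahler--Hodge decomposition on complete K\"ahler manifolds splits an $L^2$ harmonic $1$-form into holomorphic and antiholomorphic parts, and then Lemma \ref{standard} plus Hartogs extension and Liouville's theorem on the end kill the holomorphic piece. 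Nothing of this sort appears in your proposal.

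For comparison, the paper's proof avoids $b_1(M)$ entirely and instead kills $H^{1,0}(X)$ directly by complex geometry at infinity: by Lemma \ref{nostalgia}, a neighborhood of the added divisor is a quotient of a neighborhood $\mathscr{U}$ of a hyperplane in $\CP_m$, which is swept out by projective lines; since $\Omega^1_{\CP_m}|_{\CP_1}\cong \mathcal{O}(-2)\oplus\mathcal{O}(-1)^{\oplus(m-1)}$ has no holomorphic sections, any global holomorphic $1$-form on $X$ vanishes on all these lines, hence on an open set, hence identically by analytic continuation; Hodge symmetry on the K\"ahler orbifold $X$ then gives $H^{0,1}(X)=\overline{H^{1,0}(X)}=0$. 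Your opening move (reduce to a Hodge-theoretic vanishing via the K\"ahler property of $X$) is in the same spirit, but the decisive step --- where holomorphicity forces vanishing --- is missing from your argument.
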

\begin{proof}
By Lemma \ref{nostalgia}, the orbifold $(X,J)$ contains an open set of the form $\mathscr{U}/\Gamma$, where 
$\mathscr{U}\subset \CP_m$ is a tubular neighborhood of a hyperplane $\CP_{m-1}$, and this tubular neighborhood then contains a (perhaps smaller) 
neighborhood $\check{\mathscr{U}}$ of $\CP_{m-1}$ which is the union of all the projective lines $\CP_1 \subset \mathscr{U}$. 
If $\alpha\in H^0 (X, \Omega^1)$ is a global holomorphic $1$-form on the orbifold $X$, we can restrict it to $\mathscr{U}/\Gamma$ and then 
pull it back to obtain a holomorphic $1$-form $\hat{\alpha}\in H^0(\mathscr{U}, \Omega^1)$. However, 
the cotangent bundle of $\CP_m$ restricted to a projective line is isomorphic to ${\mathcal O}(-2) \oplus {\mathcal O}(-1)\oplus \cdots \oplus {\mathcal O}(-1)$,
so any holomorphic $1$-form on $\mathscr{U}$ must vanish identically along any projective line $\CP_1\subset \mathscr{U}$. It follows that $\hat{\alpha}$ vanishes
identically on $\check{\mathscr{U}}$. Hence $\alpha\equiv 0$ on  a non-empty open set, and hence $\alpha \equiv 0$ on $X$
by the uniqueness of  analytic continuation. Thus  $H^0(X, \Omega^1)=0$. 
However, $H^0(X, \Omega^1)=H^{1,0}(X)$ and $H^1 (X, {\mathcal O}) = H^{0,1} (X)$ are conjugate by Hodge symmetry, since $X$ admits K\"ahler metrics. 
This shows that $H^1 (X, {\mathcal O})=0$, as claimed. 
\end{proof}

In the asymptotically Euclidean case, this now allows us to prove  a result that will  play a leading role in \S \ref{positive} below:  

\begin{prop} \label{ae2} When $m\geq 3$, 
any AE K\"ahler $m$-manifold $(M^{2m},g,J)$ admits a 
proper holomorphic degree-one map  $M\to \CC^m$. 
\end{prop}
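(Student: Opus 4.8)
The plan is to compactify $M$ and then exhibit the desired map as the morphism attached to the linear system of the divisor at infinity. Since $(M,g,J)$ is AE, each end group $\Gamma_j$ is trivial, and Proposition \ref{class2} tells us there is only one end; so Lemma \ref{compactify} produces a \emph{compact} K\"ahler manifold $(X,J_X)$ obtained from $M$ by attaching a single smooth hypersurface $\Sigma\cong\CP_{m-1}$, whose normal bundle is $\mathcal{O}_{\CP_{m-1}}(1)$ (Lemmas \ref{ndn} and \ref{nostalgia}). I would like to build a holomorphic map $\Phi\colon X\to\CP_m$ that carries $\Sigma$ onto a hyperplane $H_\infty$ and restricts to a proper degree-one map $M=X\setminus\Sigma\to\CC^m=\CP_m\setminus H_\infty$; the ratios of the homogeneous coordinates will then furnish the global holomorphic functions on $M$ asymptotic to the coordinates $z^j$ of Lemma \ref{standard}.

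First I would count sections of the line bundle $\mathcal{O}_X(\Sigma)$. The exact sequence
$$0\to\mathcal{O}_X\to\mathcal{O}_X(\Sigma)\to\mathcal{O}_\Sigma(\Sigma)\to 0,$$
together with $\mathcal{O}_\Sigma(\Sigma)\cong\mathcal{O}_{\CP_{m-1}}(1)$ and $H^0(\CP_{m-1},\mathcal{O}(1))=\CC^m$, yields a restriction map $H^0(X,\mathcal{O}_X(\Sigma))\to H^0(\Sigma,\mathcal{O}(1))$ whose cokernel injects into $H^1(X,\mathcal{O}_X)$. But $H^1(X,\mathcal{O}_X)=0$ by Lemma \ref{vanishing}, so this restriction is surjective and $h^0(X,\mathcal{O}_X(\Sigma))=1+m$. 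Writing $s_0$ for the canonical section cutting out $\Sigma$ and completing it to a basis $s_0,\dots,s_m$, I obtain candidate homogeneous coordinates.

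Next I would verify base-point freeness and identify the map. Off $\Sigma$ the section $s_0$ is nowhere zero, while along $\Sigma$ the surjectivity just established shows the $s_j$ restrict to a basis of the globally generated bundle $\mathcal{O}_{\CP_{m-1}}(1)$; hence $|\Sigma|$ has no base points, and $\Phi=[s_0:\cdots:s_m]\colon X\to\CP_m$ is a genuine morphism. Since $\{s_0=0\}=\Sigma$ as divisors, $\Phi^{-1}(H_\infty)=\Sigma$ for $H_\infty=\{w_0=0\}$, so $\Phi$ sends $M$ into $\CC^m$; properness of $\Phi|_M$ is then immediate, because $\Phi$ is proper (a continuous map of compact spaces) and $\Phi|_M$ is its restriction to the preimage of the open set $\CC^m$.

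Finally, to see that the map has degree one, I would use $\Phi^*\mathcal{O}_{\CP_m}(1)=\mathcal{O}_X(\Sigma)$ and evaluate
$$\deg\Phi=\int_X c_1(\mathcal{O}_X(\Sigma))^m=\Sigma^m=c_1\!\left(N_\Sigma\right)^{m-1}[\Sigma]=1,$$
using the adjunction identity $\Sigma^m=c_1(N_\Sigma)^{m-1}[\Sigma]$ for the smooth hypersurface $\Sigma$ and $N_\Sigma\cong\mathcal{O}_{\CP_{m-1}}(1)$. As this self-intersection is nonzero, $\Phi$ is generically finite and surjective, and degree $1$ means it is bimeromorphic; restricting to $M$ yields the asserted proper holomorphic degree-one map $M\to\CC^m$. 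The real content of the argument is the surjectivity in the section count, which is precisely where the vanishing theorem of Lemma \ref{vanishing} is indispensable; granting that, the base-point and self-intersection verifications are routine. I expect the only point demanding genuine care to be the confirmation that $\{s_0=0\}$ is reduced and equals $\Sigma$ set-theoretically, so that $\Phi$ cleanly separates the end from the interior and lands $M$ inside the affine chart $\CC^m$.
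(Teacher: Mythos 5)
Your proposal is correct, and it shares the paper's skeleton --- compactify $M$ to a manifold $X$ via Lemma \ref{compactify} (using triviality of $\Gamma$ and Proposition \ref{class2}), invoke $H^1(X,\mathcal{O})=0$ from Lemma \ref{vanishing}, and obtain the map from the linear system $|\Sigma|$ restricted to $M=X\setminus\Sigma$ --- but you implement each of the key verifications by genuinely different means. You count sections and establish base-point freeness cohomologically, via the sequence $0\to\mathcal{O}_X\to\mathcal{O}_X(\Sigma)\to\mathcal{O}_\Sigma(\Sigma)\to 0$ together with global generation of $\mathcal{O}_{\CP_{m-1}}(1)$, and you compute the degree as the intersection number $\Sigma^m=c_1(N_\Sigma)^{m-1}[\Sigma]=1$. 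The paper instead exploits the explicit model of Lemma \ref{nostalgia}: the standard tubular neighborhood $\mathscr{U}$ of $\Sigma$ supplies a complete $m$-dimensional Kodaira family of hyperplanes; the vanishing $H^1(X,\mathcal{O})=0$ is used only to place all of these hypersurfaces in a single linear system (line bundles being classified by their Chern classes); base-point freeness follows because no point lies on every hyperplane; and degree one is read off geometrically, since $\Phi$ is biholomorphic on a smaller neighborhood $\check{\mathscr{U}}$ whose image points have no other preimages in $X$. Your route is the more self-contained piece of algebraic geometry --- it needs only the existence of the compactification, the normal bundle $\mathcal{O}_{\CP_{m-1}}(1)$, and the $H^1$ vanishing --- while the paper's route re-uses the standard model at infinity it already paid for in Lemma \ref{nostalgia}, and yields as a by-product that $\Sigma$ is carried biholomorphically onto a hyperplane. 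One small reassurance: the point you flag as delicate at the end is not actually an issue, since $s_0$ is by definition the canonical section of the divisor line bundle of the smooth reduced hypersurface $\Sigma$, so $\{s_0=0\}$ is automatically $\Sigma$ with multiplicity one, and $\Phi^{-1}(H_\infty)=\Sigma$, hence the properness and degree-one statements for $\Phi|_M$, follow at once.
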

\begin{proof}
 Since $\Gamma= \{ 1\}$ by assumption, 
the compactification $(X,J_X)$ of $(M,J)$ is a manifold, and by Lemma \ref{nostalgia} it contains an open set biholomorphic to some tubular neighborhood $\mathscr{U}$ of
$\CP_{m-1} \subset  \CP_m$. In particular, $X$ contains a complete,  $m$-complex-dimensional  family of hypersurfaces arising as hyperplanes
in $\mathscr{U}\subset \CP_m$. Since $H^1(X, {\mathcal O}) =0$, holomorphic line bundles on $X$ are classified by their Chern classes, 
and it therefore follows that all of these hypersurfaces determine the same divisor line bundle $L\to X$; that is, they all belong the 
same $m$-dimensional linear system $|H^0(X, {\mathcal O}(L))|$.  Since no point   belongs to all of these 
hypersurfaces, this linear system has no base locus, and  it therefore  gives rise to a globally defined  holomorphic map 
$$\Phi: X\rightarrow \PP [ H^0 ( X, {\mathcal O}(L))^*]\cong \CP_m.$$
Since the hyperplanes we initially considered lie entirely within $\mathscr{U}$ and give projective coordinates on some smaller tubular neighborhood $\check{\mathscr{U}}$ of $\CP_{m-1}$, 
this map  takes $\check{\mathscr{U}}$ biholomorphically to its image, and no point of $\Phi (\check{\mathscr{U}})$ has any other pre-image in $X$. 
Thus $\Phi$ has degree $1$, and  the hyperplane $\Sigma\cong \CP_{m-1}$ used to compactify $M$  is taken biholomorphically to a
hyperplane $\CP_{m-1}\subset \CP_m$. The  restriction  of $\Phi$ to $M=X-\Sigma$ therefore defines a proper, degree-one holomorphic map $M\to \CC^m$,
as desired. 
\end{proof}

\begin{rmk}
While the last result roughly says that any AE K\"ahler manifold is a generalized blow-up of $\CC^m$, it should   be emphasized
that  some rather complicated scenarios are in principle allowed when $m\geq 3$. For example, one could blow up $\CC^m$ at a point, then choose a smooth sub-variety
$V$ in the resulting exceptional $\CP_{m-1}$, then modify the blow-up $\tilde{\CC}^m$ by replacing  $V$ with its projectivized normal bundle, 
and then repeat this procedure. While Proposition \ref{ae1} below will provide  an analogous result when $m=2$, the low-dimensional picture  is  simpler, as 
 the most general AE K\"ahler surface will turn out to just be 
 an iterated blow-up of $\CC^2$ at isolated points. 
\end{rmk}

More  generally, the Kodaira-Baily embedding theorem \cite{baily}  and   Lemma \ref{vanishing}  together  imply that 
the  K\"ahler orbifold  compactification $X$ of $M$ given by  Lemma \ref{compactify}   is  always a complex projective variety; and  Lemma \ref{ado} below
leads to a similar result when $m=2$. After resolution of singularities, it is therefore easy to show that  any ALE K\"ahler manifold is biholomorphic to the complement of a 
rationally connected hypersurface in a   rationally connected smooth projective variety.

\section{Coordinate Invariance of the Mass}
\label{homerun}

Proposition  \ref{epigenome} shows that the mass of an ALE K\"ahler $m$-manifold of complex dimension  $m\geq 3$  can be calculated by
integrating a coordinate-independent differential form 
 over  a family of hypersurfaces that tends to infinity. 
This perhaps sounds like it should  imply  the K\"ahler case of 
the coordinate-invariance of the mass, in the sense of the celebrated results of Bartnik  \cite[Theorem 4.2]{bartnik} 
and Chru\'{s}ciel \cite[Theorem 2]{admchrus}. 
However,  our proof  actually proceeded by checking our asymptotic mass  formula 
in a {\em special} coordinate system, and then noticing that this  formula actually has
   an interpretation that  is essentially coordinate-free; to know that our  expression also coincides
with the standard expression for the mass in other charts,  we still had to rely on   Bartnik-Chru\'{s}ciel. 
In this section, 
we will remedy this   by proving  
 a more robust version of Proposition  \ref{epigenome} that  {\em directly} relates our integral to the standard mass expression in {\em any} asymptotic
chart in which the metric satisfies a weak fall-off hypothesis. One remarkable consequence of this argument will be  that {\em Proposition \ref{epigenome} still
holds when $m=2$}, even though Lemma \ref{standard} cannot be generalized to this setting. 
The following technical result is the linchpin of our argument:  

\begin{prop}
\label{keystep}
Let $g$ be a $C^2$ K\"ahler metric on  $(\RR^{2m}-\mathbf{D}^{2m})/\Gamma$, $m\geq 2$, where $\Gamma\subset \mathbf{SO}(2m)$ is some
finite group that acts without fixed-points on $S^{2m-1}$.
In the given coordinate system $(x^1, \ldots , x^{2m})$  on $\RR^{2m}-\mathbf{D}^{2m}$, 
assume that $g$ satisfies the weak fall-off hypothesis 
$$g_{jk}= \delta_{jk} +O(\varrho^{-\tau}), \qquad g_{jk,\ell}=  O(\varrho^{-\tau -1})$$
where  $\varrho=|x|$ and where $\tau = m-1+\varepsilon$ for some $\varepsilon > 0$. 
Then there is a continuously differentiable $1$-form $\theta$ on  $(\RR^{2m}-\mathbf{D}^{2m})/\Gamma$
such that 
$$
\int_{S_\varrho/\Gamma}\left[ g_{kj,k}-g_{kk,j}\right] \mathbf{n}^j d\mathfrak{a}_E  - \frac{2}{(m-1)!} \int_{S_\varrho/\Gamma} \theta \wedge \omega^{m-1} = O(\varrho^{-2\varepsilon})
$$
and such that $d\theta = \rho$, where $\rho$ is the Ricci form of $g$ with respect to a given compatible integrable almost-complex structure $J$. \end{prop}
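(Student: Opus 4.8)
The plan is to run, in an arbitrary admissible chart, the computation behind Proposition~\ref{epigenome}, but \emph{without} first straightening $J$ to standard form. Everything below is built naturally from $g$, $J$ and the coordinates, hence is $\Gamma$-equivariant and descends to $(\RR^{2m}-\mathbf{D}^{2m})/\Gamma$ and its spheres $S_\varrho/\Gamma$; and I will freely discard any term whose integral over $S_\varrho/\Gamma$ is $O(\varrho^{-2\varepsilon})$, calling it \emph{negligible}. Write $u:=\log\sqrt{\det g}$, let $\boldsymbol{\Gamma}:=\boldsymbol{\Gamma}^{j}\,dx^{j}$ with $\boldsymbol{\Gamma}^{j}:=g^{k\ell}\boldsymbol{\Gamma}^{j}_{k\ell}=-\Delta_{g}x^{j}$, and for a $1$-form $\beta$ let $\beta\circ J$ denote the $1$-form $X\mapsto\beta(JX)$, so that on functions $(du)\circ J=i(\partial-\bar{\partial})u$.

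First I would reduce the ADM integrand to an intrinsic object. Expanding $g^{k\ell}=\delta^{k\ell}+O(\varrho^{-\tau})$ gives $\boldsymbol{\Gamma}^{j}=g_{kj,k}-\tfrac12 g_{kk,j}+O(\varrho^{-2\tau-1})$ and $u_{,j}=\tfrac12 g_{kk,j}+O(\varrho^{-2\tau-1})$, so the ADM integrand, regarded as the $1$-form $\alpha:=(g_{kj,k}-g_{kk,j})\,dx^{j}$, satisfies $\alpha=\boldsymbol{\Gamma}-du+O(\varrho^{-2\tau-1})$; since $2\tau+1-(2m-1)=2\varepsilon$, the error is negligible. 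Using $\star_{\delta}dx^{j}|_{S_\varrho}=\mathbf{n}^{j}\,d\mathfrak{a}_{E}$, the estimate $\star_{g}-\star_{\delta}=O(\varrho^{-\tau})$ on $1$-forms, and the pointwise Kähler identity $\star_{g}\beta=-\tfrac{1}{(m-1)!}(\beta\circ J)\wedge\omega^{m-1}$, I then obtain
$$\int_{S_\varrho/\Gamma}(g_{kj,k}-g_{kk,j})\,\mathbf{n}^{j}\,d\mathfrak{a}_{E}=\frac{-1}{(m-1)!}\int_{S_\varrho/\Gamma}(\alpha\circ J)\wedge\omega^{m-1}+O(\varrho^{-2\varepsilon}),$$
where $\alpha\circ J=\boldsymbol{\Gamma}\circ J-2\theta_{0}$ and $\theta_{0}:=\tfrac12(du)\circ J=\tfrac{i}{2}(\partial-\bar{\partial})\log\sqrt{\det g}$.

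The merit of this rewriting is that the candidate primitive $\theta_{\star}:=\theta_{0}-\tfrac12\,\boldsymbol{\Gamma}\circ J$ satisfies $2\theta_{\star}=-\,\alpha\circ J$ identically, so that
$$\frac{2}{(m-1)!}\int_{S_\varrho/\Gamma}\theta_{\star}\wedge\omega^{m-1}=\frac{-1}{(m-1)!}\int_{S_\varrho/\Gamma}(\alpha\circ J)\wedge\omega^{m-1},$$
which is already the desired match (with $\theta_{\star}$ in place of $\theta$) — provided $\theta_{\star}$ is a primitive of $\rho$. Now $d\theta_{0}=-i\partial\bar{\partial}u=\rho-\rho_{0}$, where $\rho_{0}:=\rho+i\partial\bar{\partial}\log\sqrt{\det g}$ is the Ricci form of the Euclidean coordinate volume $dx^{1}\wedge\cdots\wedge dx^{2m}$ with respect to $J$; this is a global closed form depending only on $J$, and since $\partial J=O(\varrho^{-\tau-1})$ and $\partial^{2}J=O(\varrho^{-\tau-2})$ (by differentiating $\nabla J=0$) one has $\rho_{0}=O(\varrho^{-m-1-\varepsilon})$. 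Thus $d\theta_{\star}=\rho$ is equivalent to the identity
$$d(\boldsymbol{\Gamma}\circ J)=-2\rho_{0}.$$
Establishing this up to a \emph{quadratic} error of size $O(\varrho^{-2m-2\varepsilon})$ is the crux of the proof, and the step I expect to be the main obstacle. Both sides are explicit $C^{0}$ $2$-forms in the first two derivatives of $g$ and $J$; the harmonicity defect $\boldsymbol{\Gamma}^{j}=-\Delta_{g}x^{j}$ and the form $\rho_{0}$ both measure the failure of the $x^{j}$ to be pluriharmonic, and matching their leading (linear in $\partial^{2}g$) parts is exactly where the Kähler condition $\nabla J=0$ — which expresses $\partial J$ through the Christoffel symbols — must be used; the remaining terms are genuinely quadratic in $\partial g$ and hence negligible.

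Finally I would promote $\theta_{\star}$ to an exact primitive. By the crux identity the discrepancy $\mu:=\rho-d\theta_{\star}=\rho_{0}+\tfrac12 d(\boldsymbol{\Gamma}\circ J)$ is a closed $C^{0}$ $2$-form of size $O(\varrho^{-2m-2\varepsilon})$ on the end; since the end retracts onto $S^{2m-1}/\Gamma$, where $H^{1}=H^{2}=0$, the Poincaré lemma with radial decay estimates produces a ($\Gamma$-invariant, $C^{1}$) primitive $\lambda$ with $d\lambda=\mu$ and $\lambda=O(\varrho^{1-2m-2\varepsilon})$, so that $\int_{S_\varrho/\Gamma}\lambda\wedge\omega^{m-1}=O(\varrho^{-2\varepsilon})$. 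Then $\theta:=\theta_{\star}+\lambda$ is continuously differentiable, satisfies $d\theta=\rho$ exactly, and differs negligibly from $\theta_{\star}$ in the relevant integral; combined with the two displays above, this yields the asserted estimate. (As a check, when $J$ is standard one has $\boldsymbol{\Gamma}=0$, $\rho_{0}=0$, $\theta=\theta_{0}$, and the argument collapses to Proposition~\ref{epigenome}.)
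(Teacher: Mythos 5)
Your first half is sound and in fact parallels the paper's own computation: the reduction of the ADM integrand to $\boldsymbol{\Gamma}-du$ modulo $O(\varrho^{-2\tau-1})$, the use of the K\"ahler identity $\star\phi=J\phi\wedge\omega^{m-1}/(m-1)!$, and the candidate $\theta_\star=\theta_0-\tfrac12\,\boldsymbol{\Gamma}\circ J$ all appear (up to notation) in the paper's proof. But the step you yourself flag as the crux --- the identity $d(\boldsymbol{\Gamma}\circ J)=-2\rho_0$ up to errors of size $O(\varrho^{-2m-2\varepsilon})$ --- is a genuine gap, and it is not merely unproven: it is the wrong statement to aim for. First, the hypotheses give decay only for $g-\delta$ and $\partial g$; the metric is merely $C^2$, with \emph{no rate at all} on $\partial^2 g$. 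Hence the estimates you invoke ($\partial^2J=O(\varrho^{-\tau-2})$, $\rho_0=O(\varrho^{-\tau-2})$, ``the remaining terms are quadratic in $\partial g$'') are unavailable: differentiating $\nabla J=0$ produces $\partial\boldsymbol{\Gamma}\cdot J$ terms, i.e.\ uncontrolled second derivatives of $g$, and your argument would need all such linear-in-$\partial^2g$ contributions to cancel identically, which you do not show. Second, the paper's proof reveals that the discrepancy $\mu=\rho-d\theta_\star$ is in fact \emph{not} pointwise small: the honest primitive of $\rho$ equals, modulo pointwise-negligible terms, $\theta_\star-2\,\gamma\circ J$, where $\gamma=d^*\leo$ and $\leo:=i\omega^{2,0}-i\omega^{0,2}$ is built from the $(2,0)+(0,2)$ part of $\omega$ relative to $J_0$. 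This $\gamma$ is of the same linear order $O(\varrho^{-\tau-1})$ as $\boldsymbol{\Gamma}$ and $du$, so $\mu$ contains the term $-2\,d(\gamma\circ J)$, which is linear in $\partial^2g$ and has no reason to be of quadratic size; consequently your Poincar\'e-lemma step, which needs $\mu=O(\varrho^{-2m-2\varepsilon})$ to produce a negligible $\lambda$, collapses. What saves the paper is not smallness but exactness under the integral: by the same K\"ahler identity, $(\gamma\circ J)\wedge\omega^{m-1}$ equals, up to the factor $-(m-1)!$, the exact form $d(\star\leo)$, so its integral over every $S_\varrho/\Gamma$ vanishes by Stokes' theorem.

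The structural idea you are missing --- and which is what makes the proof work under these weak hypotheses --- is to never differentiate an estimated quantity. The paper defines $\theta$ as the imaginary part of the connection $1$-form $\vartheta$ of the Chern (equivalently, Levi-Civita) connection on $K=\Lambda^{m,0}_J$ in the trivialization by the $J$-adapted form $\varphi=\zeta^1\wedge\cdots\wedge\zeta^m$; then $d\theta=\rho$ holds \emph{exactly}, by the general fact that the curvature of this connection is $i\rho$, with no estimate required. All asymptotic analysis is then performed on $\theta$ itself: computing $\bar\partial\varphi=d\varphi$ and $\partial\log\|\varphi\|^2$ uses only first derivatives of $\mathscr{K}$ and of $g$ (the K\"ahler condition expressing $\partial J$ algebraically through the Christoffel symbols), and yields $\theta=\tfrac12 J\left(\gimel+4\gamma-d\log\sqrt{\det g}\right)+O(\varrho^{-2\tau-1})$ --- precisely your $\theta_\star$ plus the $\gamma$-correction. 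If you reorganize your argument in this direction (exact primitive first, asymptotic formula second, with the $\gamma$-term killed by Stokes rather than by decay), it becomes the paper's proof.
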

\begin{proof} Let $J$ be a given almost-complex structure which is parallel with respect to $g$, and recall that this implies that $J$ is integrable. 
By the   argument used in the proof of Lemma \ref{ndn}, we may then find a unique constant-coefficient almost-complex structure $J_0$ on $\RR^{2m}$
such that 
$$J=J_0 + O(\varrho^{-\tau}), \qquad \triangledown J= O(\varrho^{-\tau -1}),$$
where $\triangledown$ denotes the Euclidean connection associated with the coordinate system and where $\varrho=|x|$. By rotating our coordinates if necessary, 
we may then assume that $J_0$ is the usual complex-structure tensor of $\CC^m$. Since $\Gamma$ preserves $J$ and acts by linear transformations, 
it automatically preserves $J_0$, too, so  we actually have $\Gamma \subset \mathbf{U}(m)$. 
We will now systematically work in the complex coordinates $(z^1, \ldots , z^m)$ associated with this picture of $J_0$. 

Per standard conventions \cite{bes}, we let $J$ act on 1-forms $\phi$ by $J\phi = -\phi\circ J$, thereby making it consistent with index-raising. 
With this understood,  then, at least at large radius,
$$J\, dz^\mu= -i( dz^\mu+ \mathscr{K}^\mu_{\bar{\nu}}d\bar{z}^{\bar{\nu}} + \mathscr{L}^\mu_\nu dz^\nu)$$
for a uniquely determined  collection of coefficients $\mathscr{K}^\mu_{\bar{\nu}}$ and $ \mathscr{L}^\mu_\nu$ with the same $C^1_{-\tau}$ fall-off as $J-J_0$.  
Since we consequently also have 
$$J\, d\bar{z}^{\bar{\mu}}= +i( d\bar{z}^{\bar{\mu}}+ \overline{\mathscr{K}^\mu_{\bar{\nu}}}d{z}^{{\nu}} + \overline{\mathscr{L}^\mu_\nu} d\bar{z}^{\bar{\nu}}),$$
applying $J$ again therefore gives us 
$$J^2dz^\mu \equiv   -\left( dz^\mu + 2\mathscr{L}^\mu_\nu dz^\nu\right) \quad \bmod  C^1_{-2\tau} .$$
The fact that $J^2=-I$ therefore implies that $\mathscr{L}^\mu_\nu \in C^1_{-2\tau}$, and hence that 
$$J\, dz^\mu\equiv -i( dz^\mu+ \mathscr{K}^\mu_{\bar{\nu}}d\bar{z}^{\bar{\nu}})  \quad \bmod C^1_{-2\tau} ,$$
thus allowing us to  sweep the $ \mathscr{L}^\mu_\nu$ into the error term in our calculations.

Now  consider the collection of $1$-forms defined by 
$$
\zeta^\mu:= {\textstyle \frac{1}{2}}\left( dz^\mu + i Jdz^\mu\right)  \equiv  dz^\mu + {\textstyle \frac{1}{2}}\mathscr{K}^\mu_{\bar{\nu}}d\bar{z}^{\bar{\nu}} \quad \bmod C^1_{-2\tau}. 
$$
These are all $(1,0)$-forms  with respect to $J$, so the $m$-form 
$$\varphi = \zeta^1 \wedge \cdots \wedge \zeta^m$$
is consequently of type $(m,0)$ with respect to $J$. 
If we now let 
$$\varphi_0 = dz^1 \wedge \cdots \wedge dz^m$$
denote the standard coordinate  $(m,0)$-form with respect to $J_0$,
then 
$$\varphi ~\equiv ~\varphi_0 ~-\, {\textstyle \frac{1}{2}}\sum_{\mu,\bar{\nu}=1}^m (-1)^\mu \mathscr{K}^\mu_{\bar{\nu}}\, d\bar{z}^{\bar{\nu}}
\wedge dz^1\wedge \cdots \wedge \widehat{dz^\mu} \wedge
\cdots  \wedge dz^m \quad \bmod C^1_{-2\tau}. $$
Consequently, 
\begin{equation}
\label{volfall}
\varphi\wedge \bar{\varphi} ~\equiv ~\varphi_0\wedge \bar{\varphi}_0 \qquad \bmod C^1_{-2\tau}, 
\end{equation}
even though we merely have  $\varphi- \varphi_0\in C^1_{-\tau}$. 

Because $g$ is K\"ahler with respect to $J$, we of course have $\nabla J=0$, where $\nabla$ denotes the Levi-Civita connection of $g$.
In our real asymptotic coordinate system $(x^1, \ldots, x^{2m})$ this statement takes the explicit form 
$$\triangledown_jJ^\ell_k  + \mathbf{\Gamma}_{je}^\ell J^e_{k} - \mathbf{\Gamma}_{jk}^eJ^\ell_e=0$$
where $\triangledown$ denotes the flat Euclidean coordinate connection, and where  $\mathbf{\Gamma}$ are once again the  Christoffel symbols of $g$. 
By thinking of $\mathbf{\Gamma}^\ell_{jk}$ as a matrix-valued $1$-form $[\mathbf{\Gamma}_j]$, 
we can now usefully rewrite this as 
$$\triangledown_j J = \Big[\,\, J \, , \, [\mathbf{\Gamma}_j] \,\,\Big]$$
or, equivalently, as 
$$\triangledown_j (J-J_0) = \Big[\,J_0\, , \,  [\mathbf{\Gamma}_j]\,\Big] + \Big[\,(J-J_0)\, , \,  [\mathbf{\Gamma}_j]\,\Big].$$
But since our fall-off conditions tells us that $\mathbf{\Gamma}= O(\varrho^{-\tau-1})$ and that 
$J- J_0 = O(\varrho^{-\tau})$,  we therefore have 
\begin{equation}
\label{parallax}
\triangledown_j (J-J_0) = \Big[\,J_0\, , \,  [\mathbf{\Gamma}_j]\,\Big] + O(\varrho^{-2\tau-1}).
\end{equation}
Now set 
$$\mathscr{K}:= \mathscr{K}^\mu_{\bar{\nu}} \frac{\partial}{\partial z^\mu}\otimes dz^{\bar{\nu}},$$
where the Einstein summation convention is understood, and notice that 
$$J\equiv J_0 + i \mathscr{K} - i \overline{\mathscr{K}} \qquad \bmod C^1_{-2\tau}.$$
Expressing the endomorphism $[\mathbf{\Gamma}_j]$ as
$$[\mathbf{\Gamma}_j] = \mathbf{\Gamma}^\mu_{j\nu} \frac{\partial}{\partial z^\mu} \otimes dz^\nu + \mathbf{\Gamma}^\mu_{j\bar{\nu}}\frac{\partial}{\partial z^\mu} \otimes dz^{\bar{\nu}} + \mathbf{\Gamma}^{\bar{\mu}}_{j\nu}\frac{\partial}{\partial z^{\bar{\mu}}} \otimes dz^\nu + \mathbf{\Gamma}^{\bar{\mu}}_{j\bar{\nu}}\frac{\partial}{\partial z^{\bar{\mu}}} \otimes dz^{\bar{\nu}}$$
we can thus rewrite \eqref{parallax} as  
$$i \mathscr{K}^\mu_{\bar{\nu},j} = 2i \mathbf{\Gamma}^\mu_{j\bar{\nu}} + O(\varrho^{-2\tau-1})$$
and so deduce that 
$$
\mathbf{\Gamma}^\mu_{j\bar{\nu}}= {\textstyle \frac{1}{2}}\mathscr{K}^\mu_{\bar{\nu},j} + O(\varrho^{-2\tau-1}).
$$ 
In particular, after decomposing the  index $j$ into parts of type $(1,0)$ and $(0,1)$ with respect to $J_0$, we consequently have 
\begin{equation}
\label{keystone}
\mathscr{K}^\mu_{\bar{\nu},{\lambda}} = 2\mathbf{\Gamma}^\mu_{{\lambda}\bar{\nu}}+ O(\varrho^{-2\tau-1}) 
\end{equation}
and
\begin{equation}
\label{chiral}
\mathscr{K}^\mu_{\bar{\nu},\bar{\lambda}} = 2\mathbf{\Gamma}^\mu_{\bar{\lambda}\bar{\nu}}+ O(\varrho^{-2\tau-1}). 
\end{equation}
Since  the Levi-Civita connection  $\nabla$ is torsion-free, equation  \eqref{chiral} then implies  that 
$$
\mathscr{K}^\mu_{\bar{\nu},\bar{\lambda}}-\mathscr{K}^\mu_{\bar{\lambda}, \bar{\nu}} = O(\varrho^{-2\tau-1}).
$$
Thus, with the Einstein summation convention understood,  
\begin{equation}
\label{jozu}
\mathscr{K}^\mu_{\bar{\nu}, \bar{\lambda}} d\bar{z}^{\bar{\nu}}\wedge  d\bar{z}^{\bar{\lambda}}=   O(\varrho^{-2\tau -1}), 
\end{equation}
in contrast to the $O(\varrho^{-\tau -1})$ fall-off we might have na{\"\i}vely expected.

The same sort of decomposition  also allows us to express the metric as 
$$g = g_{\mu\nu} dz^\mu\otimes dz^\nu+ g_{\mu\bar{\nu}}dz^\mu \otimes d\bar{z}^{\bar{\nu}} + g_{\bar{\mu}\nu} d\bar{z}^{\bar{\mu}} \otimes  dz^\nu
+g_{\bar{\mu}\bar{\nu}}d\bar{z}^{\bar{\mu}} \otimes d\bar{z}^{\bar{\nu}}$$
where symmetry and reality imply that
$$g_{\mu\nu}=g_{\nu\mu}=\overline{g_{\bar{\mu}\bar{\nu}}}=\overline{g_{\bar{\nu}\bar{\mu}}},
\qquad g_{\mu\bar{\nu}}= g_{\bar{\nu}\mu}=\overline{g_{\bar{\mu}\nu}} = \overline{g_{\nu\bar{\mu}}}.$$
Our fall-off hypothesis now becomes 
$$
g_{\mu\nu}\in C^1_{-\tau}, \qquad g_{\mu\bar{\nu}} -  \delta_{\mu\bar{\nu}} \in C^{1}_{-\tau},
$$
with the understanding  that $\delta$ denotes the standard Euclidean metric, so that 
$[\delta_{\mu\bar{\nu}}]$ is {\em one-half} times the identity matrix. 
The K\"ahler form is thus given by 
\begin{eqnarray*}
\omega&=& g(J\cdot , \cdot) \\
&\equiv&  i \, g_{\mu\nu} (dz^\mu+ \mathscr{K}^\mu_{\bar{\lambda}}dz^{\bar{\lambda}}) \otimes dz^\nu
+ i\, g_{\mu\bar{\nu}} (dz^\mu+ \mathscr{K}^\mu_{\bar{\lambda}}dz^{\bar{\lambda}}) \otimes d\bar{z}^{\bar{\nu}}\\
&&  - 
i\, g_{\bar{\mu}\nu} (d\bar{z}^{\bar{\mu}}+ \overline{\mathscr{K}^\mu_{\bar{\lambda}}}dz^{{\lambda}})  \otimes  dz^\nu
-i\, g_{\bar{\mu}\bar{\nu}} (d\bar{z}^{\bar{\mu}}+ \overline{\mathscr{K}^\mu_{\bar{\lambda}}}dz^{{\lambda}})  \otimes  d\bar{z}^{\bar{\nu}}\quad \bmod C^1_{-2\tau}\\
&\equiv& ig_{\mu\bar{\nu}}dz^\mu \wedge d\bar{z}^{\bar{\nu}} - 
{\textstyle \frac{i}{2}}  \delta_{\lambda [\bar{\mu}}  \mathscr{K}^\lambda_{\bar{\nu}]} d\bar{z}^{\bar{\mu}}\wedge d\bar{z}^{\bar{\nu}}
+  {\textstyle \frac{i}{2}}  \delta_{\bar{\lambda} [\mu}  \bar{\mathscr{K}}^{\bar{\lambda}}_{\nu ]}  d{z}^{\mu}\wedge d{z}^{\nu}
  \quad \bmod  C^1_{-2\tau}, 
\end{eqnarray*}
where we have used the fact that $\omega$ is anti-symmetric, and hence equal to its own skew part; here
    the square brackets, denoting skew-symmetrization, have simply been added for clarity. 
  Taking the $(0,2)$ component of $\omega$ with respect to $J_0$ thus gives us the interesting complex $2$-form
  $$\omega^{0,2} \equiv  -  {\textstyle \frac{i}{2}}  \delta_{\lambda [\bar{\mu}} 
   \mathscr{K}^\lambda_{\bar{\nu}]} d\bar{z}^{\bar{\mu}}\wedge d\bar{z}^{\bar{\nu}} \quad \bmod C^1_{-2\tau}.$$
Our fall-off conditions now imply that 
$$[d^*(\omega^{0,2})]_\ell = - g^{jk}\nabla_j(\omega^{0,2})_{k\ell} = - \delta^{jk}(\omega^{0,2})_{k\ell,j} + O (\varrho^{-2\tau -1}),$$
so that 
$$[d^*(\omega^{0,2})]_\lambda  =  O (\varrho^{-2\tau -1}),$$
while \eqref{keystone} implies that 
\begin{eqnarray*}
[d^*(\omega^{0,2})]_{\bar{\kappa}}&=& -\delta^{\mu\bar{\nu}} (\omega^{0,2})_{\bar{\nu}\bar{\kappa},\mu} + O (\varrho^{-2\tau -1})\\
&=& {\textstyle \frac{i}{2}}\delta^{\mu\bar{\nu}} \delta_{\lambda [\bar{\nu}}  \mathscr{K}^\lambda_{\bar{\kappa}], \mu}+ O (\varrho^{-2\tau -1})\\
&=& i \delta^{\mu\bar{\nu}} \delta_{\lambda [\bar{\nu}}  \mathbf{\Gamma}^\lambda_{\bar{\kappa}] \mu}+ O (\varrho^{-2\tau -1})\\
&=& 
{\textstyle \frac{i}{2}} \delta^{\mu\bar{\nu}} \left( g_{[\bar{\kappa} \bar{\nu}],\mu} +  g_{\mu [\bar{\nu}, \bar{\kappa} ]} - g_{\mu [ \bar{\kappa}, \bar{\nu} ]} \right) + O (\varrho^{-2\tau -1})\\
&=& 
-i  \delta^{\mu\bar{\nu}} g_{\mu [ \bar{\kappa}, \bar{\nu} ]}  + O (\varrho^{-2\tau -1}).\\
\end{eqnarray*}
Thus
$$d^* (\omega^{0,2}) =  -i\delta^{\mu\bar{\nu}}g_{\mu [\bar{\lambda}, \bar{\nu}]} d\bar{z}^{\bar{\lambda}}  + O(\varrho^{-2\tau -1}),$$
and complex conjugation then gives us 
$$d^* (\omega^{2,0}) =  i\delta^{\nu\bar{\mu}}g_{\bar{\mu} [{\lambda}, {\nu}]} d{z}^{{\lambda}} + O(\varrho^{-2\tau -1}).$$
  Setting 
$$\leo := i \omega^{2,0} - i \omega^{0,2},$$
we therefore  obtain a real co-exact $1$-form 
$$\gamma = d^* \leo = -\star d\star \leo$$
that is explicitly given  by  
$$ 
\gamma = - 2 \, \Re e\, ( \delta^{\mu\bar{\nu}}g_{\mu [\bar{\lambda}, \bar{\nu}]} d\bar{z}^{\bar{\lambda}}  ) + O(\varrho^{-2\tau -1}).
$$

Next, we consider the $1$-form  
$$\gimel= g_{jk} \mathbf{\Gamma}^j dx^k$$
obtained from the vector field $\mathbf{\Gamma}^j\frac{\partial}{\partial x^j}$ by index-lowering, where  
$$\mathbf{\Gamma}^j: = g^{k\ell}\mathbf{\Gamma}^j_{k\ell}= \Delta x^j .$$
The usual formula for the Christoffel symbol then tells us that 
\begin{eqnarray*}
\gimel_{\bar{\nu}}&=&g_{\kappa\bar{\nu}}\mathbf{\Gamma}^\kappa = \delta_{\kappa\bar{\nu}}\mathbf{\Gamma}^\kappa + O(\varrho^{-2\tau -1})\\
&=& \delta_{\kappa\bar{\nu}}g^{jk}\mathbf{\Gamma}^\kappa_{jk} + O(\varrho^{-2\tau -1})\\
&=& 2\delta_{\kappa\bar{\nu}}\delta^{\mu\bar{\lambda}}\mathbf{\Gamma}^\kappa_{\mu\bar{\lambda}}+ O(\varrho^{-2\tau -1})\\
&=& \delta^{\mu\bar{\lambda}} (g_{\mu \bar{\nu} , \bar{\lambda}} +g_{\bar{\lambda}\bar{\nu} , \mu} - g_{ \mu\bar{\lambda},\bar{\nu}}) + O(\varrho^{-2\tau -1})\\
&=& \delta^{\mu\bar{\lambda}}g_{\bar{\lambda}\bar{\nu} , \mu} + 2 \delta^{\mu\bar{\lambda}}  g_{\mu [\bar{\nu} , \bar{\lambda}]} + O(\varrho^{-2\tau -1}).
\end{eqnarray*}
On  the other hand, the trace of equation \eqref{keystone} tells us  that 
\begin{eqnarray*}
\mathscr{K}^\mu_{\bar{\nu}, \mu}&=&2 \mathbf{\Gamma}^\mu_{\bar{\nu}\mu } + O(\varrho^{-2\tau -1})\\
&=&g^{\mu \bar{\lambda}}(g_{\mu \bar{\lambda}, \bar{\nu}} + g_{\bar{\lambda} \bar{\nu}, \mu} - g_{\mu \bar{\nu}, \bar{\lambda}})  + O(\varrho^{-2\tau -1})\\
&=&\delta^{\mu \bar{\lambda}}(g_{\mu \bar{\lambda}, \bar{\nu}} + g_{\bar{\lambda} \bar{\nu}, \mu} - g_{\mu \bar{\nu}, \bar{\lambda}})  + O(\varrho^{-2\tau -1})\\
&=& \delta^{\mu\bar{\lambda}}g_{\bar{\lambda}\bar{\nu} , \mu} - 2 \delta^{\mu\bar{\lambda}}  g_{\mu [\bar{\nu} , \bar{\lambda}]} + O(\varrho^{-2\tau -1}).
\end{eqnarray*}
This gives us an identity
\begin{equation}
\label{simplicity}
\Re e\, (\mathscr{K}^\mu_{\bar{\nu}, \mu}d\bar{z}^{\bar{\nu}})  = {\textstyle \frac{1}{2}}\gimel  + 2\gamma
\end{equation}
which will eventually prove to be invaluable.

Now, because $g$ is K\"ahler, the Levi-Civita connection $\nabla$ of $g$  induces a connection on $K:=\Lambda^{m,0}$ by restriction,
and we will simply denote this connection by $\nabla$, too. Thus, in the asymptotic region where $\varphi\neq 0$ has been defined, 
$$\nabla \varphi =  \vartheta \otimes \varphi$$
for some complex-valued connection $1$-form $\vartheta$. Setting 
$$\alpha : = \vartheta^{0,1}, \qquad \beta := \vartheta^{1,0},$$
we will now  use \eqref{jozu} and the fact \cite{bes,huybr} that the Chern and Levi-Civita connections on $K=\Lambda^{m,0}$ coincide to 
compute $\alpha$ and $\beta$ modulo harmless error terms. 
Now since $\nabla$ is actually the Chern connection, $\nabla^{0,1}= \bar{\partial}:=\bar{\partial}_J$,
and hence 
\begin{eqnarray*}
\alpha \wedge \varphi &=& \nabla^{0,1}\varphi = \bar{\partial} \varphi =  d\varphi =  d(\varphi- \varphi_0)\\
  &=& -d\left[ {\textstyle \frac{1}{2}}\sum_{\mu,\bar{\nu}=1}^m (-1)^\mu \mathscr{K}^\mu_{\bar{\nu}}\, d\bar{z}^{\bar{\nu}}\wedge dz^1\wedge \cdots \wedge \widehat{dz^\mu} \wedge
\cdots  \wedge dz^m \right] + O(\varrho^{-2\tau-1})\\
 &=&-{\textstyle \frac{1}{2}}\sum_{\mu,\bar{\nu}=1}^m (-1)^\mu d\mathscr{K}^\mu_{\bar{\nu}}\wedge  d\bar{z}^{\bar{\nu}}\wedge dz^1\wedge \cdots \wedge \widehat{dz^\mu} \wedge
\cdots  \wedge dz^m  + O(\varrho^{-2\tau-1})\\
 &=&    {\textstyle \frac{1}{2}}\sum_{\mu,\bar{\nu},\bar{\kappa}=1}^m (-1)^\mu  \mathscr{K}^\mu_{\bar{\nu},\bar{\kappa}}    d\bar{z}^{\bar{\nu}}\wedge d\bar{z}^{\bar{\kappa}}\wedge
 dz^1\wedge \cdots \wedge \widehat{dz^\mu} \wedge
\cdots  \wedge dz^m  \\
&&-
{\textstyle \frac{1}{2}}\sum_{\mu,\bar{\nu}=1}^m \mathscr{K}^\mu_{\bar{\nu},\mu}  d\bar{z}^{\bar{\nu}}\wedge dz^1\wedge \cdots   \wedge dz^m + O(\varrho^{-2\tau-1}) \\
&=&
\left( -{\textstyle \frac{1}{2}}\sum_{\mu,\bar{\nu}=1}^m \mathscr{K}^\mu_{\bar{\nu},\mu}  d\bar{z}^{\bar{\nu}}\right) \wedge \varphi_0 + O(\varrho^{-2\tau-1}) 
\\
&=&
\left( -{\textstyle \frac{1}{2}}\sum_{\mu,\bar{\nu}=1}^m \mathscr{K}^\mu_{\bar{\nu},\mu}  \bar{\zeta}^{\bar{\nu}}\right) \wedge \varphi + O(\varrho^{-2\tau-1}), \\
\end{eqnarray*}
where we have used \eqref{jozu} to sweep $\mathscr{K}^\mu_{\bar{\nu},\bar{\kappa}}d\bar{z}^{\bar{\nu}}\wedge d\bar{z}^{\bar{\kappa}}$ into the error term. 
Since the $\bar{\zeta}^{\bar{\nu}}$ are a basis for $\Lambda^{0,1}_J$, this shows that 
\begin{eqnarray*}
\alpha &=&
 -{\textstyle \frac{1}{2}} \mathscr{K}^\mu_{\bar{\nu},\mu}  \bar{\zeta}^{\bar{\nu}}  + O(\varrho^{-2\tau-1}) 
\\&=& 
-{\textstyle \frac{1}{2}} \mathscr{K}^\mu_{\bar{\nu},\mu}  d\bar{z}^{\bar{\nu}} + O(\varrho^{-2\tau-1}),
\end{eqnarray*}
where the Einstein summation convention is of course understood. 

On the other hand, the Chern connection is also compatible with the Hermitian inner product $\langle ~,~\rangle$ induced on the canonical line bundle $K$ by $g$. 
Thus, if $h:= \|\varphi\|^2 = \langle \varphi , \varphi \rangle$, we have 
\begin{eqnarray*}
\partial h &=& \nabla^{1,0} \langle \varphi , \varphi \rangle\\
&=& \langle  \nabla^{1,0} \varphi , \varphi \rangle + \langle   \varphi , \nabla^{0,1}\varphi \rangle\\
&=& \langle  \beta \otimes \varphi , \varphi \rangle + \langle   \varphi , \alpha \otimes \varphi  \rangle\\
&=& \langle \varphi , \varphi \rangle \beta + \langle \varphi , \varphi \rangle \bar{\alpha}\\
&=& h (\beta + \bar{\alpha})
\end{eqnarray*}
and hence 
$$\beta = -\bar{\alpha} + \partial \log h.$$
On the other hand, 
$$h = \| \varphi\|^2 =  i^{m^2}m!  ~\frac{\varphi \wedge \bar{\varphi}}{\omega^m},$$
and equation \eqref{volfall}  therefore tells us
that 
\begin{eqnarray*}
\partial \log h &=& \partial \log \frac{\varphi \wedge \bar{\varphi}}{\omega^m}\\
&=& -\partial \log \frac{\omega^m}{\varphi_0 \wedge \bar{\varphi}_0}+\partial \log \frac{\varphi\wedge \bar{\varphi}}{\varphi_0 \wedge \bar{\varphi}_0} \\
&=&-\partial \log \sqrt{\det g}+  O(\varrho^{-2\tau-1}),
\end{eqnarray*}
since $\varphi_0\wedge \bar{\varphi}_0$ is just  a constant times the coordinate volume element $|dz|^{2m}$,
while $\omega^m$ is just a constant times the {metric} volume element of $g$. 
Thus, relative to the trivialization given by $\varphi$,
the connection form $\vartheta$ of the Chern connection on $K$ is given by  
$$
\vartheta= \alpha + \beta  = \alpha - \bar{\alpha}  - \partial \log \sqrt{\det g} + O (\varrho^{-2\tau -1})
$$
where $\partial:= \partial_J$,  and where 
$$
\alpha = -{\textstyle \frac{1}{2}} \mathscr{K}^\mu_{\bar{\nu},\mu}  d\bar{z}^{\bar{\nu}} + O(\varrho^{-2\tau-1}). 
$$

However,  the curvature of the Chern connection on $K$ is given by   $i\rho$, where $\rho$ once again denotes the  Ricci form of $(g,J)$. Thus 
$$ d \vartheta = i \rho,$$
and 
the real $1$-form $\theta$ defined by 
$$\theta = \Im m \, \vartheta = -{\textstyle \frac{i}{2}} (\vartheta - \bar{\vartheta})$$
therefore satisfies $d\theta = \rho$. In conjunction with \eqref{simplicity},  the above calculation thus shows that 
\begin{eqnarray*}
\theta &=& i (\bar{\alpha} - \alpha) + {\textstyle \frac{i}{2}} (\partial - \bar{\partial})  \log \sqrt{\det g} \qquad + O (\varrho^{-2\tau -1})\\
 &=& -J (\alpha+\bar{\alpha} ) + {\textstyle \frac{i}{2}} (\partial - \bar{\partial})  \log \sqrt{\det g} \qquad + O (\varrho^{-2\tau -1})\\
&=& J \, \Re e \, (\mathscr{K}^\mu_{\bar{\nu},\mu}  d\bar{z}^{\bar{\nu}} )   - {\textstyle \frac{1}{2}} J \, d  \log \sqrt{\det g} \quad + O (\varrho^{-2\tau -1})\\
&=& {\textstyle \frac{1}{2}} J \, (\gimel  +4\gamma  - \, d  \log \sqrt{\det g} )\quad + O (\varrho^{-2\tau -1})
\end{eqnarray*}
where the $1$-form $\gimel$ corresponds to $ \mathbf{\Gamma}^j:=g^{k\ell} \mathbf{\Gamma}^j_{k\ell}$ by index lowering,
and where the $1$-form $\gamma$ is co-exact. 
On the other hand, the K\"ahler condition also implies that any $1$-form $\phi$ satisfies
$$\star \phi = J\phi \wedge \frac{\omega^{m-1}}{(m-1)!},$$
so it follows that 
$$\star ( \gimel +4 \gamma - d  \log \sqrt{\det g}) =  \frac{2}{(m-1)!} \, \theta \wedge \omega^{m-1} + O (\varrho^{-2\tau -1}).$$
Notice, moreover, that both sides are invariant under the isometric linear action of $\Gamma\subset \mathbf{U}(m)$. 
Setting $\tau = m-1+ \varepsilon$, where  $\varepsilon > 0$, we therefore have 
$$
\int_{S_\varrho/\Gamma} \star ( \gimel - d  \log \sqrt{\det g}) +4 \int_{S_\varrho/\Gamma}\star \gamma = \frac{2}{(m-1)!} \int_{S_\varrho/\Gamma} \theta \wedge \omega^{m-1} + O (\varrho^{-2\varepsilon}).
$$
However, since $\star \gamma = d \star \leo$, the second integral on the left vanishes by Stokes' theorem, and 
we therefore deduce that 
$$
\int_{S_\varrho/\Gamma} \star ( \gimel - d  \log \sqrt{\det g}) = \frac{2}{(m-1)!} \int_{S_\varrho/\Gamma} \theta \wedge \omega^{m-1} + O (\varrho^{-2\varepsilon}).
$$
But since $\gimel - d  \log \sqrt{\det g}= O(\varrho^{-\tau -1})$,  and since $\star$ differs from the Euclidean Hodge star by $O(\varrho^{-\tau})$,
this implies that 
$$
 \int_{S_\varrho/\Gamma}\left[ \gimel_j - (\log \sqrt{\det g})_{,j}\right]\mathbf{n}^j d{\mathfrak a}_E = \frac{2}{(m-1)!} \int_{S_\varrho/\Gamma} \theta \wedge \omega^{m-1} + 
 O (\varrho^{-2\varepsilon}).$$
However, 
$$\gimel_j = {\textstyle\frac{1}{2}}g^{k\ell} ( g_{jk,\ell}+ g_{j\ell, k}- g_{k\ell, j}) =  g_{jk,k}- {\textstyle\frac{1}{2}}g_{kk,j} +  O (\varrho^{-2\tau -1})$$
and 
$$
(\log \sqrt{\det g})_{,j}= {\textstyle\frac{1}{2}}g_{kk,j} +  O (\varrho^{-2\tau -1}),
$$
and we therefore have 
$$
\int_{S_\varrho/\Gamma}\left[ g_{kj,k}-g_{kk,j}\right] \mathbf{n}^j d\mathfrak{a}_E  - \frac{2}{(m-1)!} \int_{S_\varrho/\Gamma} \theta \wedge \omega^{m-1} = O(\varrho^{-2\varepsilon}), 
$$
as claimed. 
\end{proof}

This now  implies our coordinate-invariant reformulation of the mass:

\begin{thm}
\label{basso}
Let $(M^{2m},g,J)$ be a    ALE  K\"ahler manifold of any complex dimension $m\geq 2$. 
Suppose only that $g$ is a $C^2$ metric whose scalar curvature $s$ belongs to $L^1$, and that, in
{\em some} real asymptotic coordinate system $(x^1, \ldots, x^{2m})$ on a  given end $M_\infty$, the metric $g$ has fall-off
$$g_{jk} = \delta_{jk} + O (|x|^{1-m-\varepsilon}), \qquad g_{jk,\ell} =  O (|x|^{-m-\varepsilon})$$
for some  $\varepsilon > 0$. 
Then the mass at the given end, expressed as the limit of an integral computed in these coordinates, is well-defined, and satisfies
 $${\zap m}(M,g) = \lim_{\varrho\to \infty} \frac{1}{2(2m-1) \pi^m} \int_{S_\varrho/\Gamma} \theta \wedge \omega^{m-1} $$
for any  $1$-form $\theta$ with $d\theta = \rho$ 
on the end $M_\infty$, where $\rho$ is the Ricci form of $g$. 
Moreover, the mass, determined in this manner, is coordinate independent; computing it in any other asymptotic coordinate system 
in which the metric satisfies this weak fall-off hypothesis will produce exactly the same answer.
\end{thm}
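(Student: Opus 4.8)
The plan is to deduce everything from Proposition~\ref{keystep}, whose decisive feature is that it holds in \emph{any} asymptotic chart satisfying the weak fall-off hypothesis, with no requirement that $J$ be asymptotically standard. This is exactly what is unavailable when $m=2$, where Lemma~\ref{standard} breaks down, and it is also what will let us establish coordinate invariance without appealing to Bartnik--Chru\'{s}ciel. Setting $n=2m$ in the definition of the mass, the relevant normalization constant is $\mathbf{\eg}(m)/[4(2m-1)\pi^{m}]=(m-1)!/[4(2m-1)\pi^{m}]$, since $\mathbf{\eg}(m)=(m-1)!$. Multiplying the identity of Proposition~\ref{keystep} by this constant converts the ADM-type integrand $[g_{kj,k}-g_{kk,j}]\mathbf{n}^{j}$ on $S_\varrho/\Gamma$ into $\frac{1}{2(2m-1)\pi^{m}}\int_{S_\varrho/\Gamma}\theta\wedge\omega^{m-1}$, for a $1$-form $\theta$ with $d\theta=\rho$, modulo an error that is $O(\varrho^{-2\varepsilon})$ and therefore disappears in the limit.

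It then remains to show that the geometric integral converges, and here I would reuse the Stokes argument from the end of Proposition~\ref{epigenome}: since $d(\theta\wedge\omega^{m-1})=\rho\wedge\omega^{m-1}=\tfrac{(m-1)!}{2}\,s\,d\mu$ and $s\in L^{1}$ by hypothesis, the difference between the integrals over two coordinate spheres equals $\tfrac{(m-1)!}{2}\int_{\mathscr{V}}s\,d\mu$, where $\mathscr{V}$ is the enclosed annular region, and this is dominated by $\tfrac{(m-1)!}{2}\int_{\mathscr{V}}|s|\,d\mu$. A Cauchy-criterion argument then forces convergence. Combined with the vanishing $O(\varrho^{-2\varepsilon})$ term from the previous step, this simultaneously shows that the defining limit for the mass exists and that it is given by the stated formula.

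For coordinate independence I would exploit the fact that, when $m\geq 2$, both $H^{1}$ and $H^{2}$ of the end $M_\infty\cong(\RR^{2m}-\mathbf{D}^{2m})/\Gamma$ vanish, because each injects into the corresponding cohomology of $S^{2m-1}$ upon passing to the finite cover, exactly as in Lemma~\ref{legit}. Vanishing of $H^{2}$ yields a global $1$-form $\theta$ on $M_\infty$ with $d\theta=\rho$; vanishing of $H^{1}$ makes $\theta$ unique up to $df$, and since $df\wedge\omega^{m-1}=d(f\,\omega^{m-1})$ integrates to zero over any closed hypersurface, the quantity $\int_{S/\Gamma}\theta\wedge\omega^{m-1}$ is independent of the choice of $\theta$. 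Thus there is a single intrinsic number $\mathfrak{m}:=\lim\frac{1}{2(2m-1)\pi^{m}}\int_{S/\Gamma}\theta\wedge\omega^{m-1}$, and the convergence argument above shows that this limit is the \emph{same} for any exhaustion of $M_\infty$ by hypersurfaces tending to infinity, since the region trapped between two far-out hypersurfaces lies where $\int|s|\,d\mu$ is arbitrarily small. Because the spheres of each weakly-fall-off coordinate system furnish such an exhaustion — a chart being a diffeomorphism onto $(\RR^{2m}-\mathbf{D}^{2m})/\Gamma$, whose Euclidean spheres exhaust the end — Proposition~\ref{keystep} shows that the mass computed in any such chart equals this one intrinsic value $\mathfrak{m}$, which is precisely the assertion of coordinate independence.

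The main obstacle has in truth already been overcome in Proposition~\ref{keystep}; at the present stage the only genuine subtlety is confirming that two a priori unrelated coordinate systems produce \emph{cofinal} families of spheres, so that the annular region between a large sphere of one system and a large sphere of the other is eventually negligible in the $L^{1}$ norm of $s$. This is settled by the exhaustion remark: every fixed compact subset of $M_\infty$ is eventually enclosed by the spheres of either system, so the trapped regions escape to infinity and the two limits are forced to coincide.
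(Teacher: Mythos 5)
Your proposal is correct and follows essentially the same route as the paper's own proof: both reduce the chart-wise statement to Proposition~\ref{keystep}, invoke the Stokes/$L^1$ argument from the last paragraph of the proof of Proposition~\ref{epigenome} to show that the geometric limit $\lim_{\varrho\to\infty}\int_{S_\varrho/\Gamma}\theta\wedge\omega^{m-1}$ exists and is independent of the exhaustion used, and use $H^1_{dR}(M_\infty)=\Hom(\Gamma,\RR)=0$ to show the choice of primitive $\theta$ is immaterial, whence coordinate independence. Your extra remarks (the normalization check $\eg(m)=(m-1)!$ and the cofinality of the sphere families of two different charts) simply make explicit what the paper leaves implicit.
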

\begin{proof} 
By Proposition \ref{keystep}, there is a {\em particular} $1$-form $\theta$ with $d\theta= \rho$ such that 
\begin{equation}
\label{homefree}
\lim_{\varrho\to \infty} {\textstyle \frac{(m-1)!}{4\pi^m (2m-1)}} \int_{S_\varrho/\Gamma}\left[ g_{kj,k}-g_{kk,j}\right] \mathbf{n}^j d\mathfrak{a}_E 
 =\lim_{\varrho\to \infty} \int_{S_\varrho/\Gamma} \frac{ \theta \wedge \omega^{m-1}}{2 \pi^m(2m-1)}
\end{equation}
provided either limit exists. 
The left-hand side of equation \eqref{homefree}    is of course  the  coordinate definition of the mass associated with the given asymptotic chart.
 On the other hand,  the last paragraph  of the proof of Proposition \ref{epigenome} shows that, when   $s\in L^1$, 
  the limit on the  right-hand side of  \eqref{homefree}  exists and actually coincides with the limit obtained 
 by instead performing   the relevant integrals  
 on  the level sets of an {\em arbitrary} exhaustion function for  $\widetilde{M}_\infty$. 
 But since $\Gamma$ is finite, $H^1_{dR}(M_\infty)= \Hom (\Gamma, \RR) =0$, and any other primitive $\tilde{\theta}$ for the Ricci form 
 can be expressed as $\tilde{\theta} = \theta + df$; thus,  choosing a different primitive  $\tilde{\theta}$ would just change the integrand 
by an exact form, and so leave 
 the right-hand-side of \eqref{homefree} unchanged. This shows that the right-hand limit is coordinate-independent. Consequently, 
  the limit on the left-hand side of \eqref{homefree} exists and is also 
 independent of the choice of coordinates, provided we restrict ourselves to asymptotic  charts in which $g$ satisfies  the above weak fall-off hypothesis. 
 \end{proof}

Note that we obtain something stronger if  $(M,g,J)$ is a {\em scalar-flat} K\"ahler
manifold. Indeed, when $s\equiv 0$, the differential form $\theta \wedge  \omega^{m-1}$ is {\em closed}, and the 
integral 
$\int_\mathscr{S} \theta \wedge  \omega^{m-1}$ then 
only depends on the homology class of the compact  hypersurface $\mathscr{S}\subset M_\infty$.
One can thus 
replace  the limit on the right-hand of \eqref{homefree} with the integral on a single hypersurface! 
This remarkable fact played a central role in the process of discovery that led to the 
present  results.

In order to extend our proof of Theorem \ref{gamma} to the $m=2$ case, we now  lack only one last  ingredient:  the fact
that an ALE K\"ahler surface can only have one end. In the next section, we will show that this is indeed true. In the process, we will also discover other  
interesting  and useful results governing 
 the complex-analytic behavior  of ALE K\"ahler surfaces.

\section{Complex Asymptotics: The Surface Case} 
\label{jiffy}

As we saw in Lemma \ref{standard},  the complex structure of any ALE K\"ahler manifold of 
 complex dimension  $m\geq 3$   is 
 {\em standard at infinity}, 
in the sense that the complement of a suitable compact set is biholomorphic to
 $(\CC^m -\mathbf{D}^{2m})/\Gamma$, where $\mathbf{D}^{2m}\subset \CC^m$ is the closed unit ball. 
 However, 
 concrete examples show \cite{gibhawk,hondale2}  that  this is not generally true  when $m=2$.
 Nonetheless, many of our  high-dimensional results  still  have workable analogs in the complex surface
 case. For example, here is an $m=2$ version of Lemma \ref{ndn}:

 \begin{lem} \label{ado}
 Let ${M}_\infty$  be an end of an ALE K\"ahler surface $(M^4,g,J)$, 
where we just assume that, in some asymptotic chart, 
 the    metric  either has fall-off
 $$g_{jk}- \delta_{jk} \in C^1_{-\tau}$$
 for some $\tau > 3/2$, or else that 
 $$g_{jk}- \delta_{jk}\in C^{2,\alpha}_{-\tau}$$
 for some $\tau > 1$. 
  Then there is a (non-compact) complex surface $\mathscr{X}$
 containing an embedded holomorphic curve $\Sigma\cong \CP_1$ with self-intersection $+1$, such that 
 the universal cover 
 $\widetilde{M}_\infty$ of $M_\infty$ is biholomorphic to $\mathscr{X}-\Sigma$. 
 \end{lem}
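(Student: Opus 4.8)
The plan is to follow the template of Lemma~\ref{ndn} as far as it will go, and then to confront head-on the one place where the surface case genuinely differs from the case $m\geq 3$: the regularity of the compactifying almost-complex structure along the curve that is added at infinity.

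First I would reproduce the opening moves of the proof of Lemma~\ref{ndn}. Using the parallel condition $\nabla J=0$ for the Levi-Civita connection of $g$, together with the decay $\boldsymbol{\Gamma}=O(\varrho^{-\tau-1})$ of the Christoffel symbols supplied by either hypothesis, I would produce a constant-coefficient $J_0$ with $J\to J_0$ at infinity, identify $(\CC^2,J_0)$ with an affine chart of $\CP_2$, take $\Sigma\cong\CP_1$ to be the line at infinity and $\mathscr{X}\subset\CP_2$ a tubular neighborhood of it, and define a rough almost-complex structure on $\mathscr{X}$ by gluing the given $J$ on $\mathscr{X}-\Sigma\cong\widetilde{M}_\infty$ to $J_0$ along $\Sigma$. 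Integrating $\triangledown J=O(\varrho^{-\tau-1})$ along great circles in spheres of constant radius gives $J-J_0=O(\varrho^{-\tau})$ everywhere, with matching control on as many further derivatives as the hypothesis provides.

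Next comes the heart of the matter, and the step I expect to be the main obstacle: controlling the regularity of this rough $J$ across $\Sigma$. Passing to the coordinates $(w_1,w_2)=(1/z^1,z^2/z^1)$ that straighten out $\Sigma=\{w_1=0\}$, the blow-down degrades the weighted fall-off exactly as in the computation in Lemma~\ref{ndn}: the components of $J-J_0$ get multiplied by $O(|w_1|^{-1})$ and their first derivatives by $O(|w_1|^{-3})$. Since $\varrho^{-1}=O(|w_1|)$, the components therefore extend across $\Sigma$ like $O(|w_1|^{\tau-1})$ and their first derivatives like $O(|w_1|^{\tau-2})$; for $m=2$ this is precisely borderline, and, in contrast to the case $m\geq3$, $J$ is no longer $C^1$ up to $\Sigma$. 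In particular $J$ is still continuous, since $\tau>1$, which is what will later let the Nijenhuis tensor vanish across $\Sigma$. What I must then show is that each of the two fall-off hypotheses nonetheless forces $J$ into the regularity class demanded by the Hill-Taylor theorem, namely $W^{1,p}$ for some $p$ exceeding the real dimension $2m=4$ (hence $C^{0,\alpha}$ with weak derivative in $L^p$). Under $g_{jk}-\delta_{jk}\in C^1_{-\tau}$ with $\tau>3/2$, a direct computation near $\Sigma$ gives $\nabla J\in L^p$ for every $p<2/(2-\tau)$, and the bound $\tau>3/2$ is exactly what places some $p>4$ in this range; this case is clean and needs nothing more. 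Under the weaker $g_{jk}-\delta_{jk}\in C^{2,\alpha}_{-\tau}$ with only $\tau>1$, the raw first-derivative bound no longer suffices, and here I would instead combine the extra controlled derivative with the cancellation forced by integrability: just as the torsion-free condition improved the naive $O(\varrho^{-\tau-1})$ decay to $O(\varrho^{-2\tau-1})$ in \eqref{jozu}, the genuinely dangerous (anti-holomorphic) derivative components of $J$ acquire extra decay, so that after the blow-down $J$ again lands in the required class. Isolating this cancellation and making the two weighted estimates precise is where essentially all the work lies.

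Finally, with $J$ of the requisite regularity and its Nijenhuis tensor vanishing on the dense open set $\mathscr{X}-\Sigma$, hence identically by continuity, the Hill-Taylor version \cite{hiltay} of the Newlander-Nirenberg theorem supplies complex coordinates making $(\mathscr{X},J)$ a genuine complex surface. Since $J$ coincides with $J_0$ along $\Sigma$, the curve $\Sigma$ is $J$-holomorphic and biholomorphic to $\CP_1$, and its self-intersection equals the topological self-intersection $[\Sigma]^2=+1$ of a line in $\CP_2$ (equivalently, its normal bundle is $\mathcal{O}(1)$). Because the diffeomorphism furnished by the asymptotic coordinates intertwines the original complex structure on $\widetilde{M}_\infty$ with $J$ on $\mathscr{X}-\Sigma$, it is a biholomorphism once $J$ is integrable, giving $\widetilde{M}_\infty\cong\mathscr{X}-\Sigma$ as claimed.
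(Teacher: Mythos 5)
Your treatment of the first case ($g_{jk}-\delta_{jk}\in C^1_{-\tau}$, $\tau>3/2$) coincides with the paper's: the naive gluing of $J$ to $J_0$ across the line at infinity is $C^{0,\varepsilon}$ with $\varepsilon=\tau-1>1/2$ and lies in $W^{1,p}$ for $p\in\left(4,\,2/(2-\tau)\right)$, so its Nijenhuis tensor is an $L^p$ tensor that vanishes off the measure-zero set $\Sigma$, hence vanishes as a distribution (not ``by continuity'' --- it is not continuous in this case), and the Hill--Taylor theorem \cite{hiltay} applies. This half of your proposal is correct.

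The second case is where the proposal fails, and the problem is not merely that you have deferred ``making the weighted estimates precise'': the cancellation you hope to isolate does not exist. The obstruction is gauge-dependence. Let $\sigma$ be a smooth vector field on $\CC^2$, supported outside a ball, with components in $C^{3,\alpha}_{1-\tau}$ and with generic anti-holomorphic derivatives (built, say, from $\epsilon\,\overline{z^1}|z^1|^{-\tau}$), let $\Phi=\mathrm{id}+\sigma$, and consider the end of $(\CC^2,\,\Phi^*\delta,\,\Phi^*J_0)$. This is a flat --- in particular K\"ahler and scalar-flat --- AE surface whose metric lies in $\delta+C^{2,\alpha}_{-\tau}$ in the given chart, so it satisfies the hypotheses of the lemma for any $\tau\in(1,3/2]$; its Nijenhuis tensor vanishes identically, and \eqref{jozu} holds trivially. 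Nevertheless, because $\bar{\partial}\sigma$ may be prescribed freely within its decay class, the components of $J-J_0$ and of $\triangledown J$ occupying the dangerous radial slots attain the full generic sizes $O(\varrho^{-\tau})$ and $O(\varrho^{-\tau-1})$: neither integrability nor the K\"ahler condition improves them componentwise. (The cancellation \eqref{jozu} controls only the skew part $\mathscr{K}^\mu_{[\bar{\nu},\bar{\lambda}]}$, which is not among the components that pick up the $O(|w_1|^{-1})$ and $O(|w_1|^{-3})$ factors under the blow-down of Lemma \ref{ndn}.) For this example the naively glued structure near $\Sigma$ is only $C^{0,\tau-1}$ with $\tau-1\leq 1/2$, below the H\"older threshold of Hill--Taylor, and its derivative genuinely has size $|w_1|^{\tau-2}$, so it lies in no $L^p$ with $p\geq 2/(2-\tau)$, and $2/(2-\tau)\leq 4$ when $\tau\le 3/2$. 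Hence no argument carried out in the originally given chart can reach the required regularity class. The paper's proof instead makes a \emph{coordinate change} the central step, following \cite[Section 3.2]{hhn}: by Picard iteration --- this is precisely where the extra derivative in the $C^{2,\alpha}_{-\tau}$ hypothesis is consumed, and one gains it only along radial complex lines, where the problem is elliptic --- one constructs a diffeomorphism $\Phi$ with $\Phi-\mathrm{id}\in C^{2,\alpha}_{1-\tau}$ making all complex lines through the origin $J$-holomorphic. After this gauge fixing, $(\Phi^*J)-J_0\in C^{1,\alpha}_{-\tau}$ \emph{and vanishes in radial complex directions}, so the blow-down analysis yields a structure of class $C^{1,\varepsilon}$, $\varepsilon=\min(\alpha,\tau-1)$, across $\Sigma$; its Nijenhuis tensor then really does vanish by continuity, and the classical Newlander--Nirenberg theorems \cite{nijenwoo,malgrange} finish the proof. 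In short: in the surface case the radial gauge fixing is not a convenience but the essential idea, and your plan omits it.
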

 \begin{proof}
 If $\tau > 3/2$, the proof of Lemma \ref{ndn}  goes through with only minor improvements. 
 Indeed, suppose $\tau \geq  1 + \varepsilon$ for some $\varepsilon \in (1/2,1)$. Then the almost-complex structure $J$ constructed by our previous method
 will still be  of H\"older
 class $C^{0,\varepsilon}$. Since we have assumed that  $\varepsilon > 1/2$, the Hill-Taylor version \cite{hiltay} of Newlander-Nirenberg thus  says that 
 $J$ is integrable, in the sense of the existence of  complex coordinate charts, iff
  its Nijenhuis tensor vanishes in the distributional sense. 
   However, our  $J$  belongs to $W^{1,p}\cap C^{0,\varepsilon}$ for any $p \in  (4, 2/(1-\varepsilon))$, and its 
   Nijenhuis tensor thus has components of class $L^p$. But since the Nijenhuis tensor
   of $J$ vanishes in the classical sense on $M_\infty= \mathscr{X}-\Sigma$, it therefore vanishes
  almost everywhere; and since its components belong to $L^p$, this means they also vanish as  distributions.  The Hill-Taylor 
   theorem then tells us that $(\mathscr{X}, J)$ can be covered with local complex coordinate charts, and  that these will be   
     at least $C^{1,\varepsilon}$ with respect to the original atlas. 
     
  However, when $\tau \in (1, 3/2]$, this argument  breaks down, and we instead  need to assume that $g_{jk}- \delta_{jk}\in C^{2,\alpha}_{-\tau}$
  in order to obtain the desired conclusion. We proceed by an argument  exactly parallel  to that given in  \cite[Section 3.2]{hhn}. The key idea is to first change coordinates
  on $\CC^2 - \mathbf{D}^4$ in such a manner that all the complex lines through the origin in $\CC^2$ become $J$-holomorphic curves. 
  The reason for doing this is that, when  passing  from $\CC^m$ to $\CP_m$ by inverting a coordinate,
    the worst loss of regularity in our previous construction 
   occurred in the radial directions. 
  Improving the radial behavior of $J$ by imposing this gauge choice will allow us to overcome this difficulty. 
  Indeed, assuming that $g-\delta \in C^{2,\alpha}_{-\tau}$, Picard iteration \cite[Section 3.2, Step 1]{hhn} allows one to 
  construct such a change of coordinates 
 $\Phi : \CC^2 - \mathbf{B} \to \CC^2 - \mathbf{D}^4$, outside  a sufficiently large ball $\mathbf{B}$,  such that the components of $\Phi- \mbox{id}$  belong to 
  $C^{2,\alpha}_{1-\tau}$. (One does gain control of an extra derivative only along radial complex lines, where the problem we are solving 
  is elliptic,  but, due to the lack of ellipticity  in the transverse directions, this is all that we can expect.) 
  Thus $(\Phi^*J)- J_0$ is now  of class $C^{1,\alpha}_{-\tau}$, and moreover vanishes in  radial complex directions. 
  Thus, our previous fall-off analysis  shows that $J$ induces a complex structure on a neighborhood of a projective line $\CP_1\subset \CP_2$   that 
  is actually of H\"older class $C^{1,\varepsilon}$, where $\varepsilon = \min ( \alpha, \tau -1)$. In particular, the
  resulting almost-complex structure has vanishing Nijenhuis tensor by continuity, and so, by standard versions \cite{nijenwoo,malgrange} of Newlander-Nirenberg, 
   becomes standard in complex charts  that are at least  $C^{2,\varepsilon}$
  with respect to our original atlas. 
   \end{proof}

We remark that this Lemma was first  discovered in the special case of {\em scalar-flat} 
K\"ahler metrics \cite{chenlebweb,lebmero,lebmask,lebpoonsym}, where, at the outset, 
 one can arrange for $g$ to have much  better fall-off, and where
the relevant complex surface $\mathscr{X}$ actually arises as a hypersurface in a twistor space. 
Lemma \ref{ado} thus allows us to generalize various proofs from the narrow world of scalar-flat K\"ahler surfaces  to the present, broader context. 
In particular, an argument used in \cite{lebmask} now yields an  analog of Proposition \ref{class2}: 

\begin{prop}\label{class1}
Any ALE K\"ahler surface $(M^4,g,J)$ has 
only  one end. 
\end{prop}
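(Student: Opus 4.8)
The plan is to mirror the structure of the proof of Proposition \ref{class2}, but to replace the high-dimensional compactification machinery (Lemma \ref{compactify}, which relied on $m\geq 3$ through both the Andreotti–Grauert vanishing $H^{0,1}_{\bar\partial}(\CC^m-\mathbf{D}^{2m})=0$ and the Schouten–Struik flatness criterion) with the surface-specific hypersurface-at-infinity of Lemma \ref{ado}. The essential geometric mechanism should survive: if $M$ had two or more ends, we would produce two orthogonal positive directions for an intersection form that a Hodge-index argument forces to be of Lorentz type, and this contradiction rules out multiple ends.

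First I would invoke Lemma \ref{ado} to obtain, for each end $M_\infty^{(j)}$, a complex surface $\mathscr{X}_j\supset\Sigma_j\cong\CP_1$ with $\Sigma_j\cdot\Sigma_j=+1$, so that $\widetilde{M}_\infty^{(j)}\cong\mathscr{X}_j-\Sigma_j$. The next step is to compactify: one caps off the universal cover of each end by gluing in the corresponding $\CP_1$, descends by the (projective-linear, hence metric-preserving) $\Gamma_j$-action exactly as in Lemma \ref{compactify}, and thereby produces a compact complex orbifold surface $X$ containing the curves $\Sigma_j/\Gamma_j$ at infinity. The paper's phrase ``an argument used in \cite{lebmask}'' signals that this gluing, together with the construction of a global orbifold K\"ahler metric $\hat g$ on $X$, is available in the scalar-flat literature and extends verbatim here; I would cite it rather than redo the potential-theoretic construction. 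Once $\hat g$ is in hand, Hodge theory in the orbifold category gives the Lefschetz decomposition $H^{1,1}(X,\RR)=\RR[\hat\omega]\oplus P^{1,1}(X,\RR)$, and the Hodge–Riemann relations make the intersection form $Q([\alpha],[\beta])=\int_X\alpha\wedge\beta$ (no $\hat\omega^{m-2}$ factor is needed now, since $m=2$) of Lorentz type.

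The concluding step is the positivity/orthogonality bookkeeping. For each end I would take the closed semipositive $(1,1)$-form $\alpha_j$ supported near the $j$th end and extended across $\Sigma_j/\Gamma_j$ as the Fubini–Study form, exactly as in Proposition \ref{class2}; then $Q([\alpha_j],[\alpha_j])>0$ because $\Sigma_j$ has positive self-intersection, while $Q([\alpha_j],[\alpha_k])=0$ for $j\neq k$ because the supports are disjoint. Two or more ends would then yield two $Q$-orthogonal positive vectors, impossible for a Lorentzian form, forcing exactly one end.

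The main obstacle is dimensional and lives entirely in the compactification step. When $m=2$ the clean route of Lemma \ref{compactify} is unavailable: Schouten–Struik projective flatness fails in dimension two (as the paper itself flags in the remark after Lemma \ref{nostalgia}), so $J$ need not be standard at infinity and one cannot simply identify a neighborhood of $\Sigma_j$ with a neighborhood of a line in $\CP_2$. Consequently the delicate work is verifying that the abstract cap-and-glue of Lemma \ref{ado}'s curves produces a genuinely \emph{compact} complex orbifold that is moreover of K\"ahler type, so that the Hodge-index theorem applies; producing the global K\"ahler form by patching a Fubini–Study-type model near each $\Sigma_j$ to the given K\"ahler form on the compact core is where the real analytic content resides, and this is precisely what \cite{lebmask} supplies.
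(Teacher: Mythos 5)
Your overall mechanism (disjoint positive classes versus a Lorentzian intersection form) is exactly the paper's, but there is a genuine gap at the step where you make the compactification K\"ahler. You propose to run the metric-patching of Lemma \ref{compactify} (or to cite \cite{lebmask} for it), producing an orbifold K\"ahler metric $\hat g$ on $X$ by gluing a Fubini--Study-type model near each $\Sigma_j$ to the given metric on the core. This construction is not available when $m=2$, for two independent reasons. First, the production of a global K\"ahler potential on the end --- writing $\omega = i\partial\bar\partial f$ --- used the Andreotti--Grauert vanishing $H^{0,1}_{\bar\partial}(\CC^m-\mathbf{D}^{2m})=0$, which holds only for $m\geq 3$; for $\CC^2-\mathbf{D}^4$ this Dolbeault group does not vanish, so the $\bar\partial$-closed form $\beta$ need not be $\bar\partial$-exact and no potential $f$ need exist. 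Second, as you yourself note, Lemma \ref{nostalgia} fails for surfaces, so a neighborhood of $\Sigma_j$ in $\mathscr{X}_j$ need not be biholomorphic to a neighborhood of a line in $\CP_2$; consequently there is no Fubini--Study form near $\Sigma_j$ to glue to, and even the model potential $F(u)$, $u=|z|^2$, of \eqref{gadget} makes no sense, since the end carries no global complex coordinates in which $J$ is standard. Citing \cite{lebmask} does not repair this: the argument the paper imports from there is not a patching construction at all.

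What the paper actually does is bypass any metric construction. It compactifies as an orbifold via Lemma \ref{ado}, then \emph{blows up} the orbifold singular points to obtain a smooth compact complex surface, and observes that this surface contains smoothly immersed rational curves of positive self-intersection (coming from the $\Sigma_j$). Grauert's criterion \cite{bpv} then forces the surface to be \emph{projective}, hence of K\"ahler type, with no need to exhibit a K\"ahler form by hand --- this is the surface-specific trick, unavailable in higher dimensions, that substitutes for Lemma \ref{compactify}. From there the Hodge index theorem gives the Lorentzian signature of the intersection form on $H^{1,1}(X,\RR)$, and one uses the homology classes of the disjoint curves arising from distinct ends (each of positive self-intersection) as the orthogonal positive vectors, rather than your forms $\alpha_j$, whose construction would again require the missing potentials. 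With these replacements the contradiction you describe does go through.
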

\begin{proof}
 Lemma \ref{ado} allows 
us to construct an orbifold compactification $X$ of $M$  by adding a quotient of 
$\CP_1$ to each end. 
After blowing up, this produces a smooth compactification $X$ of 
$M$ which is  a non-singular complex surface. Moreover, the closure of each end of 
$M$ contains smoothly immersed rational curves of positive normal bundle, 
and each such curve has positive self-intersection.  Grauert's criterion  therefore guarantees \cite{bpv} that $X$ is projective, and
so in particular is of K\"ahler type. The Hodge index theorem therefore tells us that the intersection form 
on $H^{1,1}(X,\RR)$ must be of Lorentz type. However, the curves  arising from two different ends
of $M$ would necessarily be disjoint, and therefore would be orthogonal with respect to the intersection form. 
Since this would contradict the Hodge index theorem if there were two or more ends, we are therefore
forced to conclude that $M$ can only have one end. 
\end{proof}

Similarly, an argument from \cite{lebmero,lebpoonsym}  proves the following:

\begin{prop} \label{ae1} 
Any AE  K\"ahler surface  is biholomorphic to an iterated blow-up of $\CC^2$.
\end{prop}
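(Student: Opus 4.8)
The plan is to compactify $M$ to a smooth \emph{rational} projective surface $X$ in which the added curve $\Sigma\cong\CP_1$ has self-intersection $+1$, to realize $X$ as an iterated blow-up of $\CP_2$ carrying $\Sigma$ to a line $L$, and then to read off that $M=X-\Sigma$ is the corresponding iterated blow-up of $\CC^2=\CP_2-L$. Since $(M,g,J)$ is AE we have $\Gamma=\{1\}$, so $\widetilde{M}_\infty=M_\infty$, and Lemma \ref{ado} adds a single smooth rational curve $\Sigma\cong\CP_1$ of self-intersection $+1$ at the unique end guaranteed by Proposition \ref{class1}. This yields a compact complex surface $X$ with $M=X-\Sigma$; exactly as in the proof of Proposition \ref{class1}, Grauert's criterion shows $X$ is projective, and hence of K\"ahler type.

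First I would establish that $X$ is rational. Because $N_\Sigma\cong\mathcal{O}(1)$ and $H^1(\CP_1,\mathcal{O}(1))=0$, Kodaira's theorem furnishes a $2$-dimensional local family $\{\Sigma_y\}$ of smooth rational curves deforming $\Sigma$, all with normal bundle $\mathcal{O}(1)$; since $\mathcal{O}(1)$ is globally generated these sweep out an open subset of $X$, so $X$ is uniruled and $\kappa(X)=-\infty$. To upgrade ``ruled'' to ``rational'' I would run the argument of Lemma \ref{vanishing}: restricting any $\alpha\in H^0(X,\Omega^1)$ to a member $\Sigma_y$ and using the conormal sequence $0\to\mathcal{O}(-1)\to\Omega^1_X|_{\Sigma_y}\to\mathcal{O}(-2)\to 0$, whose middle term has no global sections, forces $\alpha|_{\Sigma_y}=0$ for every $y$, hence $\alpha\equiv 0$ on the open set swept out and so on all of $X$. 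Thus $q(X)=0$, and with $\kappa(X)=-\infty$ the Enriques--Kodaira classification \cite{bpv} shows $X$ is a rational surface; in particular $\chi(\mathcal{O}_X)=1$ and $\mathrm{Pic}^0(X)=0$.

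Next I would produce the birational morphism to $\CP_2$ from the linear system $|\Sigma|$. Adjunction gives $K_X\cdot\Sigma=-3$, so Riemann--Roch yields $\chi(\mathcal{O}_X(\Sigma))=1+\tfrac12(\Sigma^2-K_X\cdot\Sigma)=3$; since $(K_X-\Sigma)\cdot\Sigma<0$ and $\Sigma^2>0$ give $h^0(K_X-\Sigma)=0$, we conclude $\dim|\Sigma|\geq 2$. The system $|\Sigma|$ has no fixed part ($\Sigma$ is irreducible with $\Sigma^2>0$), and it is in fact base-point-free: resolving any nonempty base locus $Z$ on $\mu\colon\tilde X\to X$ would produce a base-point-free class $\mu^*\Sigma-Z$ with $(\mu^*\Sigma-Z)^2=\Sigma^2+Z^2=1+Z^2\leq 0$ (as $Z^2<0$), which is impossible for a system of dimension $\geq 2$, since its image would then be at most a curve. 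Hence $|\Sigma|$ defines a morphism $\phi\colon X\to\CP_N$ with $\phi^*\mathcal{O}(1)=\mathcal{O}(\Sigma)$ whose image is an irreducible surface of degree $\deg\phi\cdot\deg\phi(X)=\Sigma^2=1$, necessarily a linearly embedded $\CP_2$; thus $\phi\colon X\to\CP_2$ is birational and carries $\Sigma$ into a line $L$.

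Finally, since a birational morphism of smooth projective surfaces is a composition of point blow-ups, $\phi$ exhibits $X$ as an iterated blow-up of $\CP_2$. As $\Sigma$ is a member of the pullback system $|\phi^*L|$, it is the total transform $\phi^*L$ of $L$; irreducibility of $\Sigma$ then forces every blow-up center of $\phi$ to lie off $L$, so $\phi$ is biholomorphic near $\Sigma$ and all centers lie in $\CC^2=\CP_2-L$. Restricting $\phi$ therefore realizes $M=X-\Sigma$ as $\CC^2$ blown up iteratively at these interior (possibly infinitely near) points, as claimed. I expect the middle stage to be the main obstacle: unlike the case $m\geq 3$, there is no analog of Lemma \ref{nostalgia} pinning down a standard projective neighborhood of $\Sigma$, so both the rationality of $X$ and the existence of the degree-one map to $\CP_2$ must be extracted from surface classification together with the base-point-freeness analysis above, rather than from the projective-connection geometry available in higher dimensions.
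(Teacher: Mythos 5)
Your endpoint and overall architecture are sound, and your route is genuinely different from the paper's after the common opening (Lemma \ref{ado}, Proposition \ref{class1}, and Grauert's criterion, giving a smooth projective compactification $X$ containing $\Sigma\cong\CP_1$ with $\Sigma^2=+1$). The paper stays entirely inside Kodaira's deformation theory: it kills $H^{1,0}(X)$ exactly as you do (via $\Lambda^{1,0}X|_{\Sigma_y}\cong\mathcal{O}(-2)\oplus\mathcal{O}(-1)$ and Hodge symmetry), then uses $H^{0,1}(X)=0$ to conclude that the entire $2$-dimensional Kodaira family of deformations of $\Sigma$ lies in a \emph{single} linear system, observes that this system is base-point-free because the curves of the family have empty common intersection, and so obtains a map $X\to\CP_2$ that is a biholomorphism near $\Sigma$; it closes with the factorization of bimeromorphic surface maps into blow-ups and blow-downs. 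You instead invoke the Enriques--Kodaira classification to get rationality of $X$, compute $\dim|\Sigma|\geq 2$ by Riemann--Roch, and argue base-point-freeness numerically. What the paper's route buys is that it never needs $\chi(\mathcal{O}_X)$, rationality, or Riemann--Roch; what yours buys is that it uses the deformation family only to sweep out an open set, not the completeness and dimension statements of Kodaira's theorem. Your final step (identifying $\Sigma$ with the total transform $\phi^*L$ and forcing all centers off $L$) is a correct substitute for the paper's observation that $\phi$ is a biholomorphism near $\Sigma$.

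There is, however, one genuine gap: your base-point-freeness argument. You claim $(\mu^*\Sigma-Z)^2\leq 0$ is impossible because a base-point-free system of dimension $\geq 2$ would then have image of dimension $\leq 1$, ``which is impossible.'' It is not: a base-point-free system of dimension $\geq 2$ can be composed with a pencil. For instance, pulling back $|\mathcal{O}_{\CP_1}(2)|$ under any fibration $S\to\CP_1$ gives a base-point-free system of dimension $2$ with $D^2=0$ and one-dimensional image. So the dichotomy has to be closed by hand. One repair within your framework: if $D:=\mu^*\Sigma-Z$ is base-point-free with $D^2=0$ and $\dim|D|\geq 2$, then $|D|$ is composed with a pencil, so $D=f^*A$ for a fibration $f:\tilde{X}\to B$ with connected fibers and a divisor $A$ on $B$ of degree $k$; since $h^0(\tilde{X},D)=h^0(B,A)\leq k+1$, we get $k\geq 2$, whence $D\cdot Z=k\,(F\cdot Z)$ is either $0$ or $\geq 2$, contradicting $D\cdot Z=-Z^2=1$.

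A cleaner repair bypasses both the resolution argument and your Riemann--Roch computation. You have already established $H^1(X,\mathcal{O}_X)=0$ in your rationality step, so the restriction sequence
$$0 \to \mathcal{O}_X \to \mathcal{O}_X(\Sigma) \to \mathcal{O}_{\Sigma}(\Sigma)\cong\mathcal{O}_{\CP_1}(1) \to 0$$
shows that $H^0(X,\mathcal{O}_X(\Sigma))\to H^0(\CP_1,\mathcal{O}(1))$ is surjective. Since $\mathcal{O}_{\CP_1}(1)$ is generated by global sections, $|\Sigma|$ has no base points on $\Sigma$; and since $\Sigma$ is itself a member of the system, any base point would have to lie on $\Sigma$. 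Hence $|\Sigma|$ is base-point-free, and moreover $h^0(\mathcal{O}_X(\Sigma))=1+2=3$ exactly, so the associated morphism lands in $\CP_2$ on the nose and your degree-one argument concludes immediately.
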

\begin{proof}
In the asymptotically Euclidean case,  the compactification $X$ is actually a complex manifold,  obtained by adding
a $\CP_1$  of self-intersection $+1$ to $M$. Grauert's criterion \cite{bpv} thus  implies that $X$ is projective, and 
in particular is K\"ahler. However, we must have $H^{1,0}(X)=0$, since the Kodaira deformations  \cite{kodsub} of
this   $\CP_1$
sweep out an open subset of $X$, and since $\Lambda^{1,0}X$ must be isomorphic to $\mathcal{O}(-2) \oplus \mathcal{O}(-1)$
on any one of these smoothly embedded copies of $\CP_1$. Hodge symmetry therefore tells us that $H^{0,1}(X)=0$, and
it therefore follows that all of these rational curves actually belong to the  same linear system. 
Since the intersection of all these rational curves is empty, this linear system has no base locus. 
Since Kodaira's theorem \cite{kodsub} moreover tells us that the dimension of this family is $2$, 
this linear system defines  a non-singular holomorphic map  $X\to \CP_2$ which sends a neighborhood
of   $\Sigma \subset X$ biholomorphically to a neighborhood of a projective line $\CP_1\subset \CP_2$. 
Since bimeromorphic maps between compact surfaces always factor into blow-ups and
blow-downs, it follows that $X$ is obtained from $\CP_2$ by blowing up points away from
this   $\CP_1$. Deleting  the line at infinity, we thus see that $M$ is simply a blow-up of $\CC^2$. 
\end{proof}

In particular, Proposition \ref{ae1} tells us that  the complex structure of  an AE K\"ahler surface is  always  standard at infinity,
just as it was in higher dimensions. 
 We emphasize, however,   that  the corresponding statement is generally {\em false}  for  ALE K\"ahler surfaces.
 Here it is perhaps worth emphasizing that the proof of Proposition \ref{ae1} is {\em global} in nature. 
 This should be contrasted with the {\em local} type of rigidity displayed by Lemma \ref{nostalgia}, the proof of which fails in a crucial respect when $m=2$. 
 
  Indeed, 
  if $\Sigma\subset \mathscr{X}$ 
 is an embedded $\CP_1$ of self-intersection $+1$ in a non-compact complex surface,  the Kodaira family $\mathscr{Y}$ of its deformations
 still carries a holomorphic projective structure, but now {\em any} holomorphic projective structure on a complex surface locally arises in this fashion
   \cite{hitproj,lebthes}. While 
the Weyl  curvature   always vanishes for a $2$-dimensional projective structure,  most such  structures are certainly 
not flat. Indeed, the  obstruction to projective flatness in dimension $m=2$ is actually 
 measured by the {\em projective Cotton 
tensor}, which can locally be expressed as 
$C_{\lambda\mu\nu}= \nabla_{[\mu}r_{\nu]\lambda},$ 
where $\nabla$ is 
any torsion-free  holomorphic connection  that both represents the projective structure and induces a flat connection
on the canonical line bundle of  $\mathscr{Y}$,  and where $r$ denotes the Ricci tensor of $\nabla$. 
The Cotton tensor of $\mathscr{Y}$ at the base-point $o$ is actually the obstruction 
to the triviality of the {\em fourth} infinitesimal neighborhood $\Sigma^{(4)}$.  Nonetheless,  one can still prove the following:

 \begin{lem} \label{aria} 
 Let $\mathscr{X}$ be a (possibly non-compact) 
 complex surface, and let $\Sigma\subset \mathscr{X}$ be an embedded  $\CP_1$ of self-intersection $+1$. Then the third infinitesimal neighborhood
 $\Sigma^{(3)}$
of $\Sigma\subset \mathscr{X}$ is isomorphic to the third infinitesimal neighborhood of a projective line $\CP_1 \subset \CP_2$. 
 \end{lem}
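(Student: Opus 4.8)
The plan is to reduce the assertion to a \emph{linearization} statement and then verify it order by order. The key preliminary observation is that a neighborhood of a line $\CP_1\subset\CP_2$ is already the linear model: projecting $\CP_2$ away from a point off the line exhibits $\CP_2$ minus that point as the total space of $\mathcal{O}(1)\to\CP_1$, with the line as zero section. Since the self-intersection $+1$ forces the normal bundle of $\Sigma$ to be $N\cong\mathcal{O}(1)$, the third neighborhood of the line in $\CP_2$ is exactly the third infinitesimal neighborhood of the zero section of $N$. Hence it suffices to show that $\Sigma^{(3)}$ is isomorphic to this linear model. I would set this up concretely by covering $\Sigma=\CP_1$ with two affine charts and writing $\mathscr{X}$ near $\Sigma$ via transition functions $w=z^{-1}+\sum_{j\ge1}a_j(z)\xi^j$ and $\eta=z\,\xi+\sum_{j\ge2}b_j(z)\xi^j$, where $\xi,\eta$ are transverse coordinates and the leading term $z\,\xi$ records $N\cong\mathcal{O}(1)$. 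Linearizing $\Sigma^{(k)}$ then means removing the $a_j,b_j$ with $j\le k$ by admissible changes of coordinate on each chart.

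I would then run the standard order-by-order argument: assuming orders $<j$ have been normalized, the obstruction to removing the order-$j$ data is a single \v{C}ech class on $\Sigma$, and the central claim is that this class lives in the tangential group $H^1(\Sigma,\Theta_\Sigma\otimes S^{j}N^{*})\cong H^1(\CP_1,\mathcal{O}(2-j))$. Since $H^1(\CP_1,\mathcal{O}(d))=0$ exactly when $d\ge-1$, this group vanishes precisely for $j\le3$ and becomes one-dimensional at $j=4$ — in perfect agreement with the fact, recorded just above, that the Cotton tensor first obstructs $\Sigma^{(4)}$. Carrying out the normalization for $j=1,2,3$ then produces the coordinate changes identifying $\Sigma^{(3)}$ with the linear model, and hence with the third neighborhood of a line in $\CP_2$.

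The step I expect to be the main obstacle is justifying that the obstruction is purely tangential. The normal-direction discrepancy $b_j$ a priori represents a class in $H^1(\Sigma,N\otimes S^{j}N^{*})\cong H^1(\CP_1,\mathcal{O}(1-j))$, which is already nonzero at $j=3$; taken naively, this would wrongly obstruct $\Sigma^{(3)}$. The resolution is that $b_j$ is \emph{not} an independent obstruction: a base reparametrization $\tilde z=z+\phi_{j-1}(z)\xi^{j-1}$ feeds, through the chain rule applied to the leading term $z\,\xi$ in $\eta$, directly into the order-$j$ coefficient of $\eta$, so the tangential freedom absorbs the normal discrepancy. I would make this precise by working with the restricted tangent sheaf and tracking the connecting maps in the cohomology sequence of $0\to\Theta_\Sigma\to\Theta_{\mathscr X}|_\Sigma\to N\to0$ tensored with $S^{j}N^{*}$, checking that the normal contribution is killed for $j\le3$.

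Conceptually, all of this is the statement that the $2$-jet of the holomorphic projective structure carried by the Kodaira family $\mathscr{Y}$ at the base-point $o$ is flat, its only possible obstruction being the projective Weyl curvature, which vanishes identically in complex dimension $2$; the first genuine obstruction, at the $3$-jet and hence at $\Sigma^{(4)}$, is the Cotton tensor. I would invoke this picture (via \cite{lebthes,hitproj} together with Weyl's vanishing result \cite{weylproj}) as the guiding principle, since it both explains why $j=4$ is the critical order and ties the result to the preceding discussion, while the transition-function computation supplies the direct verification that $\Sigma^{(3)}$ is standard.
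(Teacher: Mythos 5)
First, a point of comparison: the paper never actually proves this lemma --- it is stated without proof, the intended justification being the preceding identification of neighborhoods of a $+1$-curve with two-dimensional holomorphic projective structures, whose first local invariant (the projective Cotton tensor) only enters at the order governing $\Sigma^{(4)}$. Your direct two-chart computation is therefore necessarily a different route, and most of it is sound: the reduction to the linear model (projection of $\CP_2$ from a point off the line does exhibit its complement as the total space of $\mathcal{O}(1)\to\CP_1$), the order-by-order normalization with tangential obstruction group $H^1(\CP_1,\mathcal{O}(2-j))=0$ for $j\le 3$, and --- most importantly --- your diagnosis that the entire difficulty is the normal discrepancy $b_3$, which naively lives in $H^1(\CP_1,\mathcal{O}(-2))\cong\CC\neq 0$, together with the chain-rule mechanism by which an order-$2$ base reparametrization absorbs it. That mechanism is correct, and an explicit computation confirms it.

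However, the step where you propose to ``make this precise'' is wrong as stated, and it sits exactly at the crux. The long exact sequence of $0\to\Theta_\Sigma\to\Theta_{\mathscr{X}}|_\Sigma\to N\to 0$ tensored with $S^3N^*$ reads
$$0=H^1(\mathcal{O}(-1))\longrightarrow H^1\bigl(\Theta_{\mathscr{X}}|_\Sigma\otimes S^3N^*\bigr)\longrightarrow H^1(\mathcal{O}(-2))\longrightarrow 0,$$
so the full fixed-order obstruction group is isomorphic to $H^1(\mathcal{O}(-2))\cong\CC$: no connecting homomorphism kills the normal contribution, and indeed coordinate changes that are trivial modulo $\xi^3$ provably cannot touch the offending coefficient of $b_3$. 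What saves the lemma is a cross-order effect, and it must be tracked as such. The reparametrization $\tilde z=z+\phi(z)\xi^2$ also shifts the already-normalized coefficient $a_2$, so to preserve the order-$2$ normalization the pair of chart parameters $(\phi,\hat\phi)$ must glue to a global section of $\Theta_\Sigma\otimes S^2N^*\cong\mathcal{O}_{\CP_1}$; hence $\phi\equiv c$ is a constant, and the residual freedom is exactly one-dimensional. One must then verify that this constant acts nontrivially on the order-$3$ normal data: differentiating the leading term of $\eta$ (which is $\xi/z$ in the usual convention for $\mathcal{O}(1)$, not $z\xi$) shows that the substitution shifts $b_3$ by $c/z^2$, and $z^{-2}$ is precisely the monomial spanning the cokernel of the order-$3$ coboundaries. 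Invariantly, the residual gauge acts through cup product with $c_1(N)\in H^1(\CP_1,\Omega^1)\cong H^1(\mathcal{O}(-2))$, so the absorption map $\CC\to\CC$ is an isomorphism precisely because $\deg N=+1\neq 0$; for a topologically trivial normal bundle it would vanish. With this bookkeeping --- the stabilizer $H^0(\Theta_\Sigma\otimes S^{j-1}N^*)$ of the lower-order normalization acting on order-$j$ data, in place of connecting maps at fixed order $j$ --- your induction closes and the proof is complete. One further small caveat: your claimed ``perfect agreement'' at $j=4$ is looser than stated, since the genuine obstruction space there is not just $H^1(\mathcal{O}(-2))$ but the full $H^1(\Theta_{\mathscr{X}}|_\Sigma\otimes S^4N^*)$ modulo the (by then trivial, respectively nonlinear) residual gauge action; this is immaterial for the lemma, which concerns only $j\le 3$.
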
 

Combining this with  the proof of Proposition \ref{standard} then yields 

 \begin{prop} \label{oscar}
Let $(M^4,g,J)$ be an ALE  K\"ahler surface. Then there is an  asymptotic  coordinate system $(x^1, \ldots, x^4)$ on the universal cover of the end $M_\infty$ of
$M$ in which 
$$g = \delta + O(|x|^{-1-\varepsilon}), \qquad \triangledown g = O(|x|^{-2-\varepsilon})$$
and 
$$
J = J_0 + O(|x|^{-3}), \qquad \triangledown J= O (|x|^{-4})
$$
where $\triangledown$ is the coordinate (Euclidean) derivative, and $J_0$ is the familiar constant-coefficient almost complex structure tensor 
on  $\CC^2=\RR^4$. 
\end{prop}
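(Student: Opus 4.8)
The plan is to combine Lemma \ref{aria}, which controls the complex structure to high order, with the harmonic-coordinate argument already developed for Lemma \ref{standard}, which controls the metric. The role played by Lemma \ref{nostalgia} in the high-dimensional case is now taken over by Lemma \ref{aria}: since the latter only asserts triviality of the \emph{third} infinitesimal neighborhood rather than of an entire neighborhood, we cannot hope for $J$ to become exactly standard, but only to agree with $J_0$ to a correspondingly high order at infinity, which is exactly the rate asserted in the statement.

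First I would produce a preliminary chart realizing the conclusion for $J$ alone. Lemma \ref{ado} exhibits $\widetilde{M}_\infty$ as $\mathscr{X}-\Sigma$ for an embedded $\Sigma\cong\CP_1$ of self-intersection $+1$, and Lemma \ref{aria} identifies $\Sigma^{(3)}$ with the third infinitesimal neighborhood of a line $\CP_1\subset\CP_2$. Choosing a diffeomorphism of neighborhoods that induces this isomorphism and is holomorphic to third order along $\Sigma$, and then passing to the affine chart $\CC^2=\CP_2-\Sigma$ exactly as in the proof of Lemma \ref{ndn}, I obtain real coordinates $(x^1,\ldots,x^4)$, with associated constant-coefficient $J_0$, in which third-order tangency along $\Sigma=\{w_1=0\}$ translates through the inversion $w_1=1/z^1$ into the decay
$$J=J_0+O(|x|^{-3}),\qquad \triangledown J=O(|x|^{-4}).$$
In this same chart the metric still only satisfies the weak ALE fall-off hypothesized in Lemma \ref{ado}.

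The second step is to upgrade the metric to $g=\delta+O(|x|^{-1-\varepsilon})$, $\triangledown g=O(|x|^{-2-\varepsilon})$ by repeating the argument of Lemma \ref{standard}, while checking that this does not disturb the control on $J$ just obtained. The key observation is that, because $g$ is K\"ahler and the complex coordinates $z^j=x^{2j-1}+ix^{2j}$ are $J$-holomorphic up to an $O(|x|^{-3})$ error, one has $\Delta_g z^j=2\bar{\partial}^*_g\bar{\partial}_J z^j=O(|x|^{-4})$; equivalently $\mathbf{\Gamma}^j:=g^{k\ell}\mathbf{\Gamma}^j_{k\ell}=O(|x|^{-4})$. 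Thus the coordinates of Step~1 are already \emph{approximately harmonic}, with a defect that decays far faster than the generic $O(|x|^{-2-\varepsilon})$. I would then run Bartnik's count of sub-quadratic harmonic functions exactly as in Lemma \ref{standard}: solving $\Delta_{\bar g}u^j=\Delta_{\bar g}x^j$ corrects the $x^j$ to genuine harmonic coordinates, and a single affine change of variables followed by the complex-linear normalization $\vec{z}\mapsto B^{-1}\vec{z}$ (available since $A^tA$ is Hermitian in the $z^j$) puts $g$ in the asserted form.

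The hard part is the compatibility of this second step with the first, together with the borderline weight that appears at real dimension $4$. Because the defect $\mathbf{\Gamma}^j$ decays like $|x|^{-4}$, the harmonic correction satisfies $u^j=O(|x|^{-2})$ and $\triangledown u^j=O(|x|^{-3})$, so the coordinate change of Step~2 alters $J$ only through terms of size $O(|x|^{-3})$, while the complex-linear normalization $B^{-1}$ preserves $J_0$ exactly; hence the fast fall-off $J=J_0+O(|x|^{-3})$, $\triangledown J=O(|x|^{-4})$ survives. The delicate point is that $|x|^{-2}$ is precisely the fundamental-solution weight in dimension $n=4$, so the Laplacian fails to be an isomorphism at the relevant weight and one must argue at the very boundary of the admissible range, controlling a possible logarithmic loss. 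This is exactly where the paper's default strengthening to $g-\delta\in C^{2,\alpha}_{-\tau}$ in real dimension four earns its keep, supplying the extra derivative needed to close the estimates without degrading the rates claimed for $J$.
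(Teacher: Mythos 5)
Your overall strategy---Lemma \ref{ado} plus Lemma \ref{aria} to produce coordinates in which $J=J_0+O(|x|^{-3})$, $\triangledown J=O(|x|^{-4})$, followed by the harmonic-function comparison of Lemma \ref{standard} to control the metric---is exactly the combination the paper invokes. But your execution of the second step has a genuine gap. You replace the Step-1 coordinates by honest harmonic coordinates $y^j=x^j-u^j$, where $\Delta_{\bar g}u^j=\Delta_{\bar g}x^j=O(|x|^{-4})$, and you assert $u^j=O(|x|^{-2})$, $\triangledown u^j=O(|x|^{-3})$. In real dimension $4$, data decaying at rate $-4$ corresponds to solving at the exceptional weight $2-n=-2$, and the solution of $\Delta u=f$ with $f=O(|x|^{-4})$ generically behaves like $|x|^{-2}\log|x|$: the coefficient of the logarithm is governed by $\int_{B_R}f\,d\mu$, which diverges logarithmically unless the angular average of the leading term of $f$ happens to vanish, and nothing forces that here. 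Your coordinates would then only give $J=J_0+O(|x|^{-3}\log|x|)$ and $\triangledown J=O(|x|^{-4}\log|x|)$, falling short of the sharp rates claimed---and those rates matter, since Honda's examples \cite{hondale2} show Proposition \ref{oscar} is optimal. The appeal to the $C^{2,\alpha}_{-\tau}$ hypothesis cannot repair this: that hypothesis supplies \emph{regularity} (its role is to make Lemma \ref{ado} work when $\tau\le 3/2$), not decay, and it has no bearing on the logarithm at the exceptional weight.

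The fix---and what ``combining Lemma \ref{aria} with the proof of Lemma \ref{standard}'' actually means---is never to modify the complex coordinates by the harmonic corrections at all. In Lemma \ref{standard} the harmonized functions $\tilde y^j=\tilde x^j-u^j$ and $y^j=f^j-v^j$ serve only as elements of Bartnik's $(2m+1)$-dimensional space $\mathcal{H}_{q,\eta}$, from which one deduces the affine relation $x^j=a^j+A^j_k\tilde x^k+w^j$; the final coordinates are the complex-linear transformation $B^{-1}\vec z$ of the Step-1 coordinates, under which the asymptotics of $J$ are \emph{exactly} preserved, since constant complex-linear maps commute with $J_0$. In that scheme the borderline weight is harmless: the $O(|x|^{-4})$ tail of $\Delta_{\bar g}f^j$ (which for $m=2$ replaces the compactly supported data of the high-dimensional case) enters only through the auxiliary function $v^j$, which need merely decay at \emph{some} positive rate---solve at weight $-2+\epsilon'$, inside the isomorphism range $(2-n,0)$---because in the combination $w^j=v^j-A^j_ku^k$ it is dominated by $u^j\in C^{2,\varepsilon}_{-\varepsilon}$ in any case. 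Both the affine comparison and the conclusion $g=\delta+O(|x|^{-1-\varepsilon})$, $\triangledown g=O(|x|^{-2-\varepsilon})$ then go through verbatim, while the $J$-asymptotics of your Step 1 are untouched.
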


Thus, while one cannot always arrange for $J$ to be standard at infinity, it is at least asymptotic to the standard complex structure to a higher order
than the fall-off of the metric would na\"ively lead one to expect. In the asymptotic coordinates provided by Proposition \ref{oscar}, the proof of
Proposition \ref{keystep} then simplifies dramatically, because the $1$-forms $\gimel$ and $\gamma$  become negligible  error terms. Assuming
the Bartnik-Chru\'{s}ciel coordinate-invariance of the mass, 
 a variant  of the demonstration of Proposition \ref{epigenome}
 thus suffices to prove the $m=2$ case of the result. This was how we first obtained the   asymptotic mass formula in the complex-surface case.

While 
Lemma \ref{aria} cannot be improved in general,  one can still do systematically better in many cases of interest. Indeed,  notice  the action of   $\Gamma$ 
 on $\widetilde{M}_\infty$ always extends to a holomorphic action on $\mathscr{X}$, and that this then induces an action on $\mathscr{Y}$ preserving
 both the holomorphic projective structure and the base-point $o\in \mathscr{Y}$. Moreover, the induced action of $\Gamma$  on $T^{1,0}\mathscr{Y}\cong \CC^2$ 
 is just given by the tautological $2$-dimensional representation of $\Gamma \subset \mathbf{U}(2)$. 
Since the Cotton tensor at $o$ must be invariant under the action of $\Gamma$, it either vanishes, or else the action of $\Gamma$ on 
$[\CC^2 \otimes \Lambda^2(\CC^2)]^*$ must have a trivial $1$-dimensional sub-representation.
In our context, this will force $\Sigma^{(4)}$ to be standard, allowing one to  osculate $J$ by $J_0$   to higher order at infinity,   unless 
$\Gamma$ is a cyclic group $\ZZ_\ell$, where  $\ell$ is odd, acting on $\CC^2$ with generator
$$\left[\begin{array}{cc}e^{2\pi i/\ell} & 0 \\0 & e^{-4\pi i/\ell}\end{array}\right]. $$
However,  the  $\ell=3$ examples of Honda \cite{hondale2} show that   Proposition \ref{oscar} is actually optimal for certain ALE scalar-flat K\"ahler surfaces.

\section{The Mass Formula for  Complex Surfaces} 

All the pieces   needed to  finish the proofs of Theorem \ref{alpha} and \ref{gamma} are now in place.
Of course, the remaining step is  to   demonstrate the $m=2$ case of
the mass formula. Once this is done,  we will then  obtain Theorem \ref{beta} by simply re-examining   some off-the-shelf  examples
using these new instruments.

\begin{thm} \label{aupair}
The mass of any  ALE K\"ahler surface $(M^4,g,J)$ is given by 
$$
{\zap m}(M,g) = -\frac{1}{3\pi} \langle \clubsuit (c_1) ,  [\omega ]\rangle + \frac{1}{12\pi^2}\int_M s_g~d\mu_g~.
$$
\end{thm}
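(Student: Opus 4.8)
The plan is to repeat the proof of Theorem \ref{gist} essentially verbatim, now specializing to $m=2$. This becomes possible because the two ingredients that were previously available only in dimension $m\geq 3$ are now in hand for surfaces: Theorem \ref{basso} supplies the coordinate-invariant asymptotic mass formula ${\zap m}(M,g)=\lim_{\varrho\to\infty}\frac{1}{6\pi^2}\int_{S_\varrho/\Gamma}\theta\wedge\omega$ for any primitive $\theta$ of the Ricci form $\rho$, while Proposition \ref{class1} guarantees that $M$ has exactly one end. Meanwhile, the isomorphism $H^2_c(M)\cong H^2_{dR}(M)$ of Lemma \ref{legit}, and hence the map $\clubsuit$ of Definition \ref{nutshell}, are valid in real dimension $4$ exactly as in higher dimensions, so every object appearing in the statement is well-defined.

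Concretely, I would first choose a $1$-form $\theta$ on the single end $M_\infty$ with $d\theta=\rho$, pick an asymptotic coordinate system with coordinate radius $\mathfrak{r}$, and select a cut-off function $f\colon M\to[0,1]$ that vanishes off $M_\infty$ and equals $1$ for $\mathfrak{r}\geq\rad$. Setting $\psi:=\rho-d(f\theta)$, the one-end property from Proposition \ref{class1} ensures that $\psi$ is a compactly supported closed $2$-form; and since $\psi$ is cohomologous to $\rho$, it represents $\clubsuit([\rho])=2\pi\,\clubsuit(c_1)$ in $H^2_c(M)$. Using the pointwise identity $\rho\wedge\omega=\frac{1}{2}s_g\,d\mu_g$ together with Stokes' theorem on the compact region $M_\varrho$ (with $\partial M_\varrho=S_\varrho/\Gamma$), I obtain
$$
2\pi\langle\clubsuit(c_1),[\omega]\rangle=\int_M\psi\wedge\omega=-\int_{S_\varrho/\Gamma}\theta\wedge\omega+\frac{1}{2}\int_{M_\varrho}s_g\,d\mu_g.
$$
Rearranging and dividing by $6\pi^2$ gives
$$
\frac{1}{6\pi^2}\int_{S_\varrho/\Gamma}\theta\wedge\omega=-\frac{1}{3\pi}\langle\clubsuit(c_1),[\omega]\rangle+\frac{1}{12\pi^2}\int_{M_\varrho}s_g\,d\mu_g,
$$
and letting $\varrho\to\infty$, while invoking Theorem \ref{basso} to identify the left-hand limit with ${\zap m}(M,g)$ (this is where integrability of $s_g$ is used, to guarantee the limit of the right-hand integral exists and equals $\int_M s_g\,d\mu_g$), produces the asserted formula.

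The point I want to stress is that at this stage there is essentially no obstacle left: the genuinely hard work has already been discharged in Sections \ref{homerun} and \ref{jiffy}. The delicate surface-specific asymptotic analysis behind Theorem \ref{basso} (via Proposition \ref{keystep} and the sharpened fall-off of $J$ from Proposition \ref{oscar}) and the global Hodge-index argument behind Proposition \ref{class1} together eliminate precisely the two places where the high-dimensional proof of Theorem \ref{gist} relied on $m\geq 3$. Consequently, the only things I would still need to verify are bookkeeping matters: that the numerical constants specialize correctly with $(m-1)!=1$, $2m-1=3$, and $\pi^m=\pi^2$, and that $b_1(M_\infty)=0$ (which holds because $\Gamma$ is finite, so that $H^1_{dR}(M_\infty)=\Hom(\Gamma,\RR)=0$ and the choice of primitive $\theta$ is immaterial, any two differing by an exact form $df$). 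Once these are checked, the proof of Theorem \ref{gist} transcribes without change.
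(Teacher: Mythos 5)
Your proposal is correct and is essentially identical to the paper's own proof: the paper likewise establishes Theorem \ref{aupair} by citing Theorem \ref{basso} (the coordinate-invariant asymptotic mass formula, now valid for $m=2$) together with Proposition \ref{class1} (uniqueness of the end), and then re-running the proof of Theorem \ref{gist} with $m$ set equal to $2$. Your write-up simply makes explicit the substitutions, constants, and the role of $s_g\in L^1$ that the paper's terse three-sentence proof leaves implicit.
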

\begin{proof} 
Theorem  \ref{basso} shows that the asymptotic  mass formula of  Proposition \ref{epigenome}
also holds in the $m=2$ case. Meanwhile, Proposition \ref{class1}  shows that the $m=2$ version  of Proposition \ref{class2} 
also holds. 
With these minor substitutions,   the proof
 of Theorem \ref{gist}, with $m$ set now equal to $2$,  then proves the desired cohomological mass formula.
\end{proof}

In conjunction with Theorem  \ref{gist}, Theorem  \ref{aupair} now  implies   Theorem \ref{gamma}. 
We also obtain the following  corollary:

\begin{thm} \label{atlast}
The mass of any ALE {\em scalar-flat} 
K\"ahler surface $(M^4,g,J)$ is given by 
$$
{\zap m}(M,g) = -\frac{1}{3\pi} \langle \clubsuit (c_1) ,  [\omega ]\rangle .
$$
In particular,  the mass is a topological invariant in this setting, 
and depends only on the underlying  
manifold $M$, together with the cohomology classes $c_1(M,J)$ and $[\omega]$. 
\end{thm}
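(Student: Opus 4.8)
The plan is to derive this as an immediate corollary of the general surface mass formula just established in Theorem \ref{aupair}. First I would simply invoke that theorem, which gives
$${\zap m}(M,g) = -\frac{1}{3\pi} \langle \clubsuit (c_1) ,  [\omega ]\rangle + \frac{1}{12\pi^2}\int_M s_g~d\mu_g$$
for \emph{any} ALE K\"ahler surface. Since the metric is by hypothesis scalar-flat K\"ahler, its scalar curvature $s_g$ vanishes identically, so the second term on the right-hand side drops out entirely, leaving precisely the asserted expression. It is worth remarking that scalar-flatness is a genuinely clean special case rather than a degenerate one: the integrability of $s_g$ required by condition (\ref{eins}) is exactly what guarantees the integral term is well-defined in general, and setting $s_g\equiv 0$ trivially satisfies this while annihilating the term.

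To establish that the surviving quantity is a topological invariant in the stated sense, I would then check that each of its ingredients depends only on the underlying smooth manifold $M$ together with the classes $c_1(M,J)$ and $[\omega]$. The first Chern class $c_1=c_1(M,J)\in H^2(M)$ is determined by the complex structure, and $[\omega]\in H^2(M)$ is the K\"ahler class of $g$; these are the advertised data. The operator $\clubsuit$ is, by Definition \ref{nutshell}, the inverse of the natural map $H^2_c(M)\to H^2_{dR}(M)$, and Lemma \ref{legit} guarantees that this map is an isomorphism for any ALE manifold of real dimension $n\geq 4$, so in particular for our four-manifold. Hence $\clubsuit$ is intrinsically defined by the smooth structure of $M$ alone. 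Finally, the pairing $\langle\,,\,\rangle$ between $H^2_c(M)$ and $H^{2m-2}(M)=H^2(M)$ is the Poincar\'e duality pairing, which is again purely topological. Consequently $\langle \clubsuit (c_1),[\omega]\rangle$ depends only on $M$, $c_1$, and $[\omega]$, exactly as claimed.

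I expect there to be essentially \emph{no} substantive obstacle here: all of the analytic and complex-geometric difficulty has already been absorbed into the proofs of Theorem \ref{basso} (which extends the asymptotic formula to $m=2$), Proposition \ref{class1} (one end), and Theorem \ref{aupair} itself. The only point meriting any care is purely bookkeeping — confirming that the vanishing of $s_g$ is what legitimately removes the curvature integral and that the remaining cohomological expression carries no metric-dependent data beyond the K\"ahler class — and both of these are immediate from the results cited above.
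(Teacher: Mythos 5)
Your proposal is correct and is exactly the paper's own (implicit) argument: the paper states Theorem \ref{atlast} as an immediate corollary of Theorem \ref{aupair}, obtained by setting $s_g\equiv 0$ so that the curvature integral vanishes, leaving only the cohomological term. Your additional verification that $\clubsuit$ and the duality pairing are purely topological (via Lemma \ref{legit} and Definition \ref{nutshell}) is a sound and welcome bit of bookkeeping, but it introduces nothing beyond what the paper takes for granted.
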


Theorem \ref{alpha}   is now an immediate consequence  of  Theorems \ref{atlas} and \ref{atlast}.

\bigskip

Let us now recast Theorem \ref{atlast} in a  more concrete form by identifying $H^2_c(M, \RR)$ with the homology group $H_2(M, \RR)$ via Poincar\'e duality. 
In this setting, 
the intersection pairing on $H^2_c(M)$ becomes the geometric pairing
on $H_2(M)$ 
obtained by counting intersection numbers of compact  (real)  surfaces in $M$. Note that  Lemma \ref{legit} implies that  this pairing 
$$H_2(M, \RR)\times H_2(M, \RR)\to \RR$$ is non-degenerate on any ALE $4$-manifold $M$.

\begin{thm}\label{onpar} 
Let $(M,g,J)$ be an ALE scalar-flat K\"ahler surface. Let $E_1, \ldots E_\mathfrak{b}$ be a basis for $H_2(M, \RR )$,
and let $Q = [Q_{jk}]= [E_j\cdot E_k ]$ be the corresponding intersection matrix.  If we define  $a_1 , \ldots , a_\mathfrak{b}$ by 
\begin{equation}
\label{caementium}
\left[\begin{array}{c}a_1 \\ \vdots  \\a_\mathfrak{b}\end{array}\right]=  \quad \raisebox{-.07in}{\mbox{\Huge $Q^{-1}$}}\left[\begin{array}{c}\int_{E_1} c_1  \\  \vdots   \\ \int_{E_\mathfrak{b}} c_1 \end{array}\right]
\end{equation}
then the mass of $(M,g)$ is given by 
\begin{equation}
\label{cement}
{\zap m}= -\frac{1}{3\pi} \sum_{j=1}^\mathfrak{b} ~a_j \int_{E_j} [\omega ]
\end{equation}
where $[\omega]$ denotes the K\"ahler class of $(M,g,J)$. 
\end{thm}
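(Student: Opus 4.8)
The plan is to obtain \eqref{cement} directly from the cohomological formula of Theorem \ref{atlast}, namely ${\zap m}(M,g) = -\frac{1}{3\pi}\langle \clubsuit(c_1),[\omega]\rangle$, by translating the duality pairing into the homological language set up in the paragraph preceding the statement. Since $m=2$, the pairing $\langle\,\cdot\,,\cdot\,\rangle$ is between $H^2_c(M)$ and $H^2(M)$, and Poincar\'e duality identifies $H^2_c(M,\RR)$ with $H_2(M,\RR)$. I would therefore let $C\in H_2(M,\RR)$ be the Poincar\'e dual of the compactly supported class $\clubsuit(c_1)\in H^2_c(M)$, and expand it in the given basis as $C=\sum_k a_k E_k$. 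The whole content of the theorem is then the assertion that these coefficients $a_k$ are exactly the ones produced by \eqref{caementium}, together with a rewriting of the pairing against $[\omega]$.

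The key step is to recover the $a_k$ as intersection numbers. Writing $\iota\colon H^2_c(M)\to H^2(M)$ for the natural isomorphism, so that $\iota(\clubsuit(c_1))=c_1$, the standard compatibility of Poincar\'e duality with $\iota$ shows that, for any compact class $E\in H_2(M)$, the evaluation $\int_E c_1$ equals the geometric intersection number $C\cdot E$. Taking $E=E_j$ gives $\int_{E_j}c_1 = C\cdot E_j = \sum_k a_k\,(E_k\cdot E_j)=\sum_k Q_{jk}a_k$, so that the column vector with entries $\int_{E_j}c_1$ equals $Q\,\vec a$, where $\vec a$ is the coordinate vector of $C$. Because the intersection form $Q$ is non-degenerate on an ALE $4$-manifold by Lemma \ref{legit}, I can invert to obtain $\vec a = Q^{-1}\,(\int_{E_j}c_1)_j$, which is precisely the defining relation \eqref{caementium}.

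It then remains to expand the pairing appearing in Theorem \ref{atlast}. Using the same dictionary, $\langle \clubsuit(c_1),[\omega]\rangle = \int_M \clubsuit(c_1)\wedge\omega$ equals the evaluation of $[\omega]$ on $C=\sum_j a_j E_j$, namely $\int_C\omega = \sum_j a_j\int_{E_j}\omega$. Substituting this into ${\zap m}=-\frac{1}{3\pi}\langle\clubsuit(c_1),[\omega]\rangle$ yields \eqref{cement}, completing the argument. The only genuinely delicate point is the bookkeeping of the Poincar\'e-duality conventions: one must confirm that the natural map $H^2_c\to H^2$ corresponds, under the two flavors of duality $H^2_c\cong H_2$ and $H^2\cong H^{\mathrm{lf}}_2$, to the natural map $H_2\to H^{\mathrm{lf}}_2$, so that evaluation of an ordinary cohomology class on a compact cycle matches the intersection number of the Poincar\'e-dual cycles. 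This is exactly the identification already recorded before the theorem, so once it is invoked the remainder is a short matrix computation.
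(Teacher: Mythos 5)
Your proposal is correct and takes essentially the same route as the paper's own proof: both arguments amount to recognizing that \eqref{caementium} says exactly that $\sum_j a_j E_j$ is the Poincar\'e dual in $H_2(M,\RR)$ of the compactly supported class $\clubsuit(c_1)$, and then expanding the pairing $\langle \clubsuit(c_1),[\omega]\rangle$ from Theorem \ref{atlast} over the basis $E_1,\ldots,E_\mathfrak{b}$. The paper runs the identification in the opposite direction (starting from the intersection-theoretic characterization of the cycle and passing to $H^2_c$), but the content, including the compatibility of the map $H^2_c(M)\to H^2(M)$ with the two duality identifications, is the same.
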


\begin{proof}
The cycle $\sum a_jE_j$ is exactly determined by the requirement that 
$$\left( \sum a_jE_j\right)\cdot D = \int_D c_1$$
for any $D\in H_2(M)$. However, since $H_2(M, \RR) = H^2_c(M)$ by Poincar\'e duality, this is equivalent to saying that 
$$\sum a_j \int_{E_j} \mho = \langle  c_1, \mho \rangle$$
for any $\mho \in H^2_c(M)$. However, for any $\Omega\in H^2(M)$, we  have
$$ \langle  \clubsuit (c_1), \Omega \rangle =  \langle  \clubsuit (c_1), \clubsuit (\Omega ) \rangle = \langle  c_1, \clubsuit (\Omega ) \rangle $$
so that, setting $\mho= \clubsuit (\Omega )$, we have  
$$\langle  \clubsuit (c_1), \Omega \rangle = \sum a_j \int_{E_j} \clubsuit (\Omega )= \sum a_j \int_{E_j} \Omega~.$$
Setting $\Omega = [\omega ]$, we therefore have 
$${\zap m}(M,g,J) = -\frac{1}{3\pi} \langle  \clubsuit (c_1), [\omega ] \rangle = -\frac{1}{3\pi}\sum a_j \int_{E_j} [\omega ]$$
 by Theorem \ref{aupair}. \end{proof}
 
 Recalling Proposition \ref{ae1}, we thus obtain the following:  
 
 \begin{cor} \label{safe}
 Let $(M,g,J)$ be an AE scalar-flat K\"ahler surface.  We may then
 choose  a homology basis $E_1, \ldots , E_\mathfrak{b}\in H_2(M, \ZZ)$ with 
 intersection matrix $Q=-I$  in which $c_1(M)$ is Poincar\'e dual to  $-\sum E_j$. Consequently, 
 $${\zap m}(M,g)= \frac{1}{3\pi} \sum_{j=1}^\mathfrak{b} \int_{E_j} [\omega]$$
 where $[\omega]$ is the K\"ahler class of $(M,g,J)$. 
 \end{cor}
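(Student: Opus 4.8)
The plan is to read off the stated formula from the intersection-theoretic version of the mass given in Theorem \ref{onpar}, once the two topological assertions buried in the statement---existence of a basis with $Q=-I$, and the fact that $c_1$ is Poincar\'e dual to $-\sum_j E_j$---have been established. These assertions are nothing but the classical facts about the homology of surface blow-ups, so the substance of the argument lies in invoking Proposition \ref{ae1} and then doing careful sign bookkeeping; the final formula falls out of a one-line linear-algebra computation.

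First I would invoke Proposition \ref{ae1}, which tells us that $(M,J)$ is biholomorphic to an iterated blow-up of $\CC^2$ at $\mathfrak{b}$ points, some possibly infinitely near. Since $\CC^2$ is contractible we have $H_2(\CC^2)=0$, and each point blow-up introduces exactly one compact exceptional $\CP_1$; taking $E_1,\dots,E_\mathfrak{b}\in H_2(M,\ZZ)$ to be the classes of the \emph{total transforms} of these exceptional curves produces a free basis of $H_2(M,\ZZ)$ whose intersection matrix is $Q=[E_i\cdot E_j]=-I$, by the standard local computation for blow-ups \cite{bpv,GH}. Non-compactness of $M$ is harmless here, as blow-up is a local modification; and Lemma \ref{legit} guarantees in any case that this intersection pairing on $H_2(M,\RR)\cong H^2_c(M)$ is non-degenerate, so that the $Q^{-1}$ appearing in \eqref{caementium} makes sense.

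Next I would pin down $c_1$. Since the canonical bundle of $\CC^2$ is trivial and every point blow-up $\pi$ replaces the canonical class by $\pi^*K+E_{\mathrm{new}}$, an induction on the number of blow-ups shows that in the total-transform basis one has $K_M=\sum_{j=1}^\mathfrak{b}E_j$, so that $c_1(M)=-K_M$ is Poincar\'e dual to $-\sum_j E_j$. Using the defining compatibility between the evaluation pairing and the intersection form under Poincar\'e duality, this gives
$$\int_{E_k}c_1=\Big(-\sum_j E_j\Big)\cdot E_k=-\sum_j(E_j\cdot E_k)=-\sum_j(-\delta_{jk})=1$$
for each $k$.

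Finally I would substitute into Theorem \ref{onpar}. The column vector on the right of \eqref{caementium} is then $(1,\dots,1)^{t}$, and since $Q^{-1}=-I$ we obtain $a_j=-1$ for every $j$; plugging this into \eqref{cement} yields ${\zap m}=-\frac{1}{3\pi}\sum_j(-1)\int_{E_j}[\omega]=\frac{1}{3\pi}\sum_j\int_{E_j}[\omega]$, exactly as claimed. The one step warranting genuine care---and hence what I regard as the main obstacle---is the translation of the classical compact blow-up formulas (the orthogonal $(-1)$-basis and $K_M=\sum_j E_j$) into the present open setting governed by Poincar\'e duality on $H^2_c$, together with tracking the sign conventions carefully enough that the evaluation $\int_{E_k}c_1$ is correctly identified with the intersection number above. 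Everything else is routine.
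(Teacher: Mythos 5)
Your proposal is correct, and it reaches the two topological facts by a route that differs from the paper's in one essential respect. The paper's own proof is very short: it invokes Proposition \ref{ae1} and then \emph{deforms the complex structure} so that the iterated blow-up becomes a blow-up of $\CC^2$ at distinct points, after which the $E_j$ can simply be taken to be the honest, disjoint exceptional $(-1)$-curves; the relations $Q=-I$ and $c_1 = \mathrm{PD}(-\sum_j E_j)$ are then the textbook facts for blow-ups at distinct points, and the final substitution into Theorem \ref{onpar} (which the paper leaves implicit) is the same one-line computation you perform. You instead stay with the given complex structure and handle infinitely near points algebraically, taking the $E_j$ to be \emph{total transforms}: the projection formula gives $E_i\cdot E_j=-\delta_{ij}$, and the inductive canonical-class formula $K_{\tilde X}=\pi^*K_X+E$ together with triviality of $K_{\CC^2}$ gives $K_M=\sum_j E_j$. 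What the paper's deformation trick buys is geometric transparency (disjoint holomorphic spheres) at the cost of implicitly using that small deformations preserve the diffeomorphism type, the homology basis, and $c_1$; what your version buys is that no deformation-invariance is needed at all---everything is computed on the given surface---at the cost of the total-transform bookkeeping, which you carry out correctly, including the point (worth making explicit, as you do) that the evaluation $\int_{E_k}c_1$ is computed via the isomorphism $H^2_c(M)\cong H^2(M)$ of Lemma \ref{legit} and its compatibility with the intersection pairing. Both arguments then conclude identically from \eqref{caementium} and \eqref{cement}.
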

 \begin{proof}
 By  Proposition \ref{ae1},  $(M,J)$ is 
 an iterated blow-up of $\CC^2$ at $\mathfrak{b}$ points, and so  has a small deformation which is a blow-up of $\CC^2$ at distinct points. 
 One can then take the $E_j$ to be the homology classes of the exceptional divisors of these distinct points. 
 \end{proof}
    
 When $\CC^2$ is  blown up at   distinct  points,  
 the expression for the mass provided by Corollary \ref{safe}  is obviously a sum of areas of holomorphic curves, and so is certainly positive
 if $\mathfrak{b} > 0$. However, $\sum_{j=1}^\mathfrak{b} E_j$ is {\em always} homologous to 
 a sum of holomorphic curves with positive integer coefficients, even 
  in the degenerate cases, so this expression for the mass 
 will  actually always be positive whenever 
 $M\neq \CC^2$. We will return to this point in Theorem \ref{roger} below.

 \begin{cor}\label{generic} Let $(M,g,J)$ be an ALE scalar-flat K\"ahler surface, where $(M,J)$ is obtained from 
 the total space of the ${\mathcal O}(-\ell)$ line bundle over $\CP_1$ by  blowing up $\mathfrak{b}-1$
distinct  points that do not lie on the zero section. Let $F$ be  the homology class of the zero section,  and let $E_1, \ldots , E_{\mathfrak{b}-1}$ be the homology classes of the
exceptional divisors of the blown up points. 
 Then
$$
{\zap m}(M,g) = \frac{1}{3\pi} \left[ \frac{2-\ell}{\ell} \int_F\omega + \sum_{j=1}^{\mathfrak{b}-1} \int_{E_j}\omega \right]~.
$$
 \end{cor}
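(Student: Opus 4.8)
The plan is to apply Theorem~\ref{onpar} directly, exploiting the fact that the prescribed complex structure hands us an obvious geometric basis of $H_2(M,\RR)$ together with an essentially diagonal intersection form. First I would take as my homology basis the classes $\{F, E_1, \ldots, E_{\mathfrak{b}-1}\}$, where $F$ is the zero section and the $E_j$ are the exceptional divisors. Since the total space of $\mathcal{O}(-\ell)\to \CP_1$ deformation-retracts onto its zero section, and hence has $H_2$ of rank one, while each blow-up of a point adds one independent class, these $\mathfrak{b}$ classes span $H_2(M,\RR)$; as there are exactly $\mathfrak{b}=b_2(M)$ of them, they form a basis, and by Lemma~\ref{legit} the resulting intersection pairing is non-degenerate, consistent with what follows.

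Next I would record the two inputs that Theorem~\ref{onpar} requires, namely the intersection matrix and the Chern numbers. The zero section of $\mathcal{O}(-\ell)$ has self-intersection $F\cdot F = -\ell$, each exceptional curve satisfies $E_j\cdot E_j = -1$, and all the cross terms vanish because the blown-up points are distinct and do not lie on the zero section; thus $Q = \mathrm{diag}(-\ell, -1, \ldots, -1)$. The Chern numbers are then read off from the adjunction formula $C\cdot(K+C) = 2g-2 = -2$ applied to each of these rational curves: for the zero section this gives $K\cdot F = \ell - 2$, hence $\int_F c_1 = -K\cdot F = 2-\ell$, while for each $(-1)$-curve it gives $K\cdot E_j = -1$, hence $\int_{E_j} c_1 = 1$.

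With $Q$ diagonal, the linear system \eqref{caementium} is solved by inspection: $a_F = (2-\ell)/(-\ell) = (\ell-2)/\ell$ and $a_{E_j} = 1/(-1) = -1$. Substituting into \eqref{cement} and collecting signs then yields
$$
{\zap m} = -\frac{1}{3\pi}\left[\frac{\ell-2}{\ell}\int_F \omega - \sum_{j=1}^{\mathfrak{b}-1}\int_{E_j}\omega\right] = \frac{1}{3\pi}\left[\frac{2-\ell}{\ell}\int_F\omega + \sum_{j=1}^{\mathfrak{b}-1}\int_{E_j}\omega\right],
$$
which is the asserted formula. I do not expect any serious obstacle here: the whole argument is a bookkeeping exercise layered on top of Theorem~\ref{onpar}, and the only places demanding genuine care are the sign conventions in the two adjunction computations and the verification that the listed classes truly constitute a basis rather than merely a spanning set.
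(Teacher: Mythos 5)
Your proof is correct and takes essentially the same route as the paper's: both work in the basis $F, E_1, \ldots, E_{\mathfrak{b}-1}$, record the intersection matrix $Q = \mathrm{diag}(-\ell, -1, \ldots, -1)$ and the Chern numbers $(2-\ell, 1, \ldots, 1)$, and then invoke Theorem~\ref{onpar}. The only difference is that you spell out the adjunction computations, the basis verification, and the explicit inversion of $Q$, all of which the paper leaves implicit.
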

\begin{proof} In the homology basis $F, E_1, \ldots , E_{\mathfrak{b}-1}$, the intersection form is represented by the matrix 
$$
\raisebox{-.07in}{\mbox{\Huge Q}}\quad =~ \left[\begin{array}{cccc}-\ell &  &  &  \\ & -1 &  &  \\&  & \ddots &  \\ &  &  & -1\end{array}\right]
$$
while 
$$
\left[\begin{array}{c}\int_F c_1 \\ \int_{E_1}c_1 \\ \vdots \\ \int_{E_{\mathfrak{b}-1}}c_1\end{array}\right]=\left[\begin{array}{c}2-\ell \\1 \\ \vdots \\1\end{array}\right]
$$
and the result therefore follows from Theorem \ref{onpar}. 
\end{proof}

In particular, one sees that the mass is negative when $\ell \geq 3$ and no points are blown up. 
This was laboriously discovered by hand for specific  explicit examples in \cite{lpa},
but now we see that this phenomenon occurs as a matter of general principle. 

Of course, the mass formula we have discovered is purely topological, and  thus
insensitive to deformations of complex structure. As an application, we  immediately now see the following:

 \begin{cor}\label{gungho} 
 Let $(M,g,J)$ be an ALE scalar-flat K\"ahler surface, where $(M,J)$ is obtained from 
 the total space of the ${\mathcal O}(-\ell)$ line bundle over $\CP_1$ by  blowing up $\mathfrak{b}-1$
distinct  points that lie on the zero section. Let $\tilde{F}$ be  the homology class of the {proper transform}
of the zero section,  and let $E_1, \ldots , E_{\mathfrak{b}-1}$ be the homology classes of the
exceptional divisors of the blown up points. 
 Then
$$
{\zap m}(M,g) = \frac{1}{3\pi \ell} \left[ (2-\ell) \int_{\tilde{F}}\omega + 2 \sum_{j=1}^{\mathfrak{b}-1} \int_{E_j}\omega \right]~.
$$
 \end{cor}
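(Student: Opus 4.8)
The plan is to imitate the proof of Corollary \ref{generic}, applying Theorem \ref{onpar} to the homology basis $\tilde F, E_1, \ldots, E_{\mathfrak{b}-1}$, but now accounting for the fact that blowing up points \emph{on} the zero section alters both the intersection pairing and the Chern numbers. First I would set up the intersection matrix. Writing $\pi$ for the blow-down onto the total space of $\mathcal{O}(-\ell)\to\CP_1$ and $F$ for the pulled-back class of the zero section, the hypothesis that the $\mathfrak{b}-1$ points are distinct smooth points of the zero section gives $\tilde F = \pi^*F - \sum_{j}E_j$. Combined with $F^2 = -\ell$, $\pi^*F\cdot E_j = 0$, and $E_j\cdot E_k = -\delta_{jk}$, this yields $\tilde F^2 = -\ell-(\mathfrak{b}-1)$ and $\tilde F\cdot E_j = 1$, so that in this basis
$$Q = \begin{pmatrix} -\ell-(\mathfrak{b}-1) & \mathbf{1}^{T} \\ \mathbf{1} & -I \end{pmatrix},$$
where $\mathbf{1}$ denotes the all-ones vector of length $\mathfrak{b}-1$.

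Next I would compute the Chern-number vector appearing in \eqref{caementium}. For the exceptional curves the standard blow-up computation gives $\int_{E_j}c_1 = 1$. For the proper transform, which is again a smooth rational curve, adjunction gives $\int_{\tilde F}c_1 = \tilde F^2 + 2 = 2-\ell-(\mathfrak{b}-1)$; equivalently one may use $c_1 = \pi^*c_1 - \sum_j E_j$ together with the value $\int_F c_1 = 2-\ell$ already recorded in Corollary \ref{generic}. Solving $Qa = v$ with $v = (2-\ell-(\mathfrak{b}-1),1,\ldots,1)^{T}$ is then immediate from the block structure: the rows indexed by the $E_j$ force $a_j = a_0 - 1$ for a common value $a_0$ (the coefficient of $\tilde F$), and substituting this into the first row collapses the dependence on $\mathfrak{b}$ and leaves $-\ell\,a_0 = 2-\ell$. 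Hence $a_0 = (\ell-2)/\ell$ and $a_j = -2/\ell$ for every $j$.

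Finally I would substitute these coefficients into the mass formula \eqref{cement}, obtaining
$${\zap m} = -\frac{1}{3\pi}\Big[\tfrac{\ell-2}{\ell}\int_{\tilde F}\omega - \tfrac{2}{\ell}\sum_{j=1}^{\mathfrak{b}-1}\int_{E_j}\omega\Big] = \frac{1}{3\pi\ell}\Big[(2-\ell)\int_{\tilde F}\omega + 2\sum_{j=1}^{\mathfrak{b}-1}\int_{E_j}\omega\Big],$$
which is exactly the claimed identity. There is no analytic obstacle here, since Theorem \ref{onpar} has already reduced everything to linear algebra; the only step demanding genuine care is the blow-up geometry of the first paragraph — in particular getting $\tilde F\cdot E_j = +1$ and $\tilde F^2 = -\ell-(\mathfrak{b}-1)$ correct. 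This is precisely what distinguishes the present corollary from Corollary \ref{generic}, where the points lay off the zero section, so that $\tilde F = F$ and $F\cdot E_j = 0$.
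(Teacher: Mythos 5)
Your proposal is correct, but it takes a genuinely different route from the paper's. The paper's own proof is two lines: it observes that the surface at hand is diffeomorphic to the one in Corollary \ref{generic} (where the $\mathfrak{b}-1$ points are blown up \emph{off} the zero section), in a manner preserving the first Chern class, and that under this identification $F$ is homologous to $\tilde{F}+E_1+\cdots+E_{\mathfrak{b}-1}$. Since the mass formula is purely topological, one simply substitutes this relation into the formula of Corollary \ref{generic} and uses $\frac{2-\ell}{\ell}+1=\frac{2}{\ell}$ to get the claim. You instead apply Theorem \ref{onpar} directly to the basis $\tilde{F}, E_1,\ldots,E_{\mathfrak{b}-1}$, which forces you to recompute the intersection matrix and the Chern numbers for the actual configuration: your blow-up geometry is right ($\tilde{F}=\pi^*F-\sum_j E_j$ gives $\tilde{F}^2=-\ell-(\mathfrak{b}-1)$ and $\tilde{F}\cdot E_j=+1$, and adjunction gives $\int_{\tilde{F}}c_1 = 2-\ell-(\mathfrak{b}-1)$), your solution $a_0=(\ell-2)/\ell$, $a_j=-2/\ell$ of the system \eqref{caementium} checks out, and substitution into \eqref{cement} yields the stated formula. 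What the paper's route buys is brevity and a conceptual point: it showcases precisely the insensitivity of the mass to deformations of complex structure that is advertised in the sentence preceding the corollary. What your route buys is self-containedness: you never need the (unproved, though true) assertion that the two configurations are diffeomorphic with matching $c_1$; everything is a direct computation on the surface at hand, and it makes visible exactly how the non-diagonal intersection form conspires to produce the $\mathfrak{b}$-independent coefficients.
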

\begin{proof} This example is diffeomorphic to the previous one, 
in a manner that preserves the first Chern class. 
The mass formula therefore follows from Corollary \ref{generic}, together with the observation that 
$\tilde{F} + E_1+ \cdots +E_{\mathfrak{b}-1}$   is homologous to $F$.
\end{proof}

Applying  Corollary \ref{gungho}
to some examples constructed in \cite{mcp2},  we now immediately 
obtain Theorem \ref{beta}: 

\begin{thm} There are infinitely many topological types of ALE scalar-flat K\"ahler surfaces 
that have zero mass, but are not Ricci-flat. Indeed, for any $\ell \geq 3$, the blow-up 
of the $\mathcal{O}(-\ell)$ line bundle on $\CP_1$ at any  non-empty collection of  distinct points
on the zero section admits such metrics. 
\end{thm}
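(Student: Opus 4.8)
The plan is to apply Corollary \ref{gungho} to the explicit ALE scalar-flat K\"ahler metrics constructed in \cite{mcp2} and, inside each such family, to exhibit a member whose mass vanishes. First I would recall that, by \cite{mcp2}, for every $\ell\geq 3$ and every non-empty finite set of distinct points on the zero section of $\mathcal{O}(-\ell)\to\CP_1$, the resulting blow-up $(M,J)$ carries ALE scalar-flat K\"ahler metrics, and that these occur in a family in which the K\"ahler class $[\omega]$ — equivalently, the areas $\int_{\tilde{F}}\omega$ and $\int_{E_j}\omega$ — varies non-trivially. Corollary \ref{gungho} then expresses the mass as the explicit linear functional
$${\zap m}(M,g) = \frac{1}{3\pi\ell}\left[(2-\ell)\int_{\tilde{F}}\omega + 2\sum_{j=1}^{\mathfrak{b}-1}\int_{E_j}\omega\right]$$
of these areas, so the whole problem is reduced to a sign analysis of this functional over the realizable K\"ahler classes.

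The crucial observation is that, since $\ell\geq 3$, the coefficient $2-\ell$ of the zero-section area is strictly negative, while the coefficient $+2$ of each exceptional area is positive; and the areas $\int_{\tilde{F}}\omega$, $\int_{E_j}\omega$ are all strictly positive, because $\tilde{F}$ and the $E_j$ are holomorphic curves and $\omega$ is a K\"ahler form. Taking a single blown-up point ($\mathfrak{b}=2$) for concreteness, the mass vanishes precisely when $\int_{E_1}\omega=\tfrac{\ell-2}{2}\int_{\tilde{F}}\omega$, a positive ratio and hence a priori attainable. Within the family of \cite{mcp2}, the mass depends continuously on the construction parameters, and I would argue that it assumes both signs: enlarging the exceptional area relative to the zero-section area forces the bracket positive, while shrinking it toward the boundary of the K\"ahler cone degenerates the metric to the un-blown-up $\mathcal{O}(-\ell)$ space, whose mass is strictly negative for $\ell\geq 3$ by Corollary \ref{generic}. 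An intermediate-value argument along a path in parameter space then produces a metric of exactly zero mass.

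I expect the main obstacle to be precisely this last point: confirming that the explicit family of \cite{mcp2} realizes K\"ahler classes on both sides of the zero-mass hyperplane $\{(2-\ell)\int_{\tilde{F}}\,\cdot\,+2\sum_j\int_{E_j}\,\cdot\,=0\}$. This is not a formal consequence of the mass formula but a statement about the range of K\"ahler classes in the construction, which I would extract from the parameters (the monopole positions and charges) governing the relative sizes of $\tilde{F}$ and the $E_j$. Finally, to see that the resulting zero-mass metrics are not Ricci-flat, I would note that $c_1(M,J)\neq 0$: the homology class $F=\tilde{F}+\sum_j E_j$ satisfies $\int_F c_1=2-\ell\neq 0$ by the computation in the proof of Corollary \ref{gungho}, so the real first Chern class is non-zero, whereas a Ricci-flat K\"ahler metric would have vanishing Ricci form $\rho$ and hence $c_1=\tfrac{1}{2\pi}[\rho]=0$. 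Letting $\ell$ range over all integers $\geq 3$ — which changes the lens space $S^3/\ZZ_\ell$ at infinity, and thus the diffeomorphism type — then yields infinitely many topological types, completing the proof.
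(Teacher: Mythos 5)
Your strategy is the paper's own --- apply Corollary \ref{gungho} to the metrics of \cite{mcp2}, locate a zero of the resulting linear functional by a sign change, rule out Ricci-flatness via $c_1\neq 0$, and vary $\ell$ to get infinitely many topological types --- but it has a genuine gap at exactly the step that carries all the content, and you flag it yourself without closing it. Corollary \ref{gungho} reduces everything to a question about which K\"ahler classes are realized by ALE scalar-flat K\"ahler metrics on a \emph{fixed} complex manifold (the blow-up at a fixed collection of points), and you never establish that the family of \cite{mcp2} realizes classes on both sides of the hyperplane $(2-\ell)\int_{\tilde{F}}\omega+2\sum_j\int_{E_j}\omega=0$, let alone on it; saying you ``would extract'' this from the monopole parameters is precisely the missing proof. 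Moreover, your proposed substitute for the negative-mass end of the family --- shrinking the exceptional areas so that the metric ``degenerates to the un-blown-up $\mathcal{O}(-\ell)$ space,'' whose mass is negative by Corollary \ref{generic} --- is not rigorous as stated: it presupposes a convergence of metrics and continuity of the mass under a degeneration you have not constructed, and it is in any case unnecessary.

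The paper closes this gap by explicit computation in the hyperbolic ansatz of \cite{mcp2}. The metric is built from monopole points $p_0,p_1,\ldots,p_{\mathfrak{b}-1}$ in hyperbolic $3$-space; the blown-up point on the zero section corresponding to $p_j$ is the endpoint at infinity of the geodesic ray $\overrightarrow{p_0p_j}$, so moving $p_j$ \emph{along that ray} changes the metric while keeping the complex structure --- i.e., the collection of blown-up points --- completely fixed. The areas are computed exactly: $\int_{\tilde{F}}\omega=\pi$ (the sphere at infinity carries the Fubini--Study metric) and $\int_{E_j}\omega=2\pi/(e^{2\rad_j}-1)$, where $\rad_j$ is the hyperbolic distance from $p_0$ to $p_j$. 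Corollary \ref{gungho} then gives
$$
{\zap m}(M,g)=\frac{1}{3\ell}\left[\,2-\ell+4\sum_{j=1}^{\mathfrak{b}-1}\frac{1}{e^{2\rad_j}-1}\right],
$$
which runs from $+\infty$ (as the $\rad_j\to 0$) down to $(2-\ell)/(3\ell)<0$ (as the $\rad_j\to\infty$), so it vanishes for suitable distances; indeed one can solve in closed form, e.g.\ $\rad_j=\log\sqrt{5}$ for all $j$ when $\mathfrak{b}-1=\ell-2$. This explicit formula also repairs a second, smaller defect: you treat only $\mathfrak{b}=2$ ``for concreteness,'' whereas the theorem asserts the conclusion for \emph{any} non-empty collection of distinct points; the computation above is uniform in the number and the positions of the points, since the ray directions (hence the blown-up points) are arbitrary and independent of the distances $\rad_j$. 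Your non-Ricci-flatness argument ($\int_F c_1=2-\ell\neq0$, while a Ricci-flat K\"ahler metric forces $c_1=0$ in $H^2_{dR}$) and your count of topological types via the fundamental group $\ZZ_\ell$ at infinity are both correct and agree with the paper.
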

\begin{proof} 
Let  $p_0, p_1, \ldots , p_{\mathfrak{b}-1}$ be distinct points in hyperbolic $3$-space $\mathcal{H}^3$, 
chosen so that  that the geodesic rays $\overrightarrow{p_0p_1}, \ldots , \overrightarrow{p_0p_{\mathfrak{b}-1}}$
all have distinct initial tangent directions at $p_0$. Let $\mathfrak{r}_j$, $j=0, \ldots , \mathfrak{b}-1$, denote the hyperbolic distance
from $p_j$, considered as a function on $\mathcal{H}^3$. Let  $X={\mathcal H}^3-\{ p_0, p_1, \ldots , p_{\mathfrak{b}-1}\}$,
and let $P\to X$ be the principal $\mathbf{U}(1)$-bundle with $c_1=-\ell$
on a small $2$-sphere around  $p_0$ and $c_1=-1$ on a small $2$-sphere around any other $p_j$. 
Set 
$$V := 1+ \frac{\ell}{e^{2\mathfrak{r}_0}-1}+ \sum_{j=1}^{\mathfrak{b}-1}\frac{1}{e^{2\mathfrak{r}_j}-1},$$
on $X$, and let $\vartheta$ be a connection $1$-form on $P\to X$ with curvature  
$$d\vartheta = \star dV,$$
where the Hodge star is computed with respect to the hyperbolic metric ${\zap h}$ on $X\subset \mathcal{H}^3$ and standard orientation. 
Finally, let 
$${\zap g}= \frac{1}{4\sinh^2 \mathfrak{r}_{0}}\left[ V{\zap h}+ V^{-1}\vartheta^2\right]$$
on $P$, and let $(M,g)$ be the metric completion of $(P,{\zap g})$. Then $(M,g)$ is an ALE 
scalar-flat K\"ahler surface, and $(M,J)$ is obtained \cite[p. 244]{mcp2} from the $\mathcal{O}(-\ell)$ line bundle on $\CP_1$
by blowing up $\mathfrak{b}-1$ distinct points on the zero section. The proper transform $\tilde{F}$ of the zero section is 
represented in this picture by the sphere at infinity of $\mathcal{H}^3$, and the restriction of $g$ to 
$\tilde{F}$ is just the standard Fubini-Study metric, with total area $\pi$. 
On the other hand, the exceptional curve $E_j$ is the closure in $M$ of the inverse image in $P$ of the geodesic ray in $\mathcal{H}^3$ which starts at $p_j$ and
points diametrically away from $p_0$; its total area is given by $2\pi/(e^{2\rad_j}-1)$, where $\rad_j= \mathfrak{r}_0(p_j)$ is the hyperbolic distance
from $p_0$ to $p_j$. By Corollary \ref{gungho}, the mass of the resulting metric is therefore given by 
$${\zap m}(M,g)= \frac{1}{3\ell}\left[
2-\ell + 4\sum_{j=1}^{\mathfrak{b}-1} \frac{1}{e^{2\rad_j}-1}
\right]$$
and so, if $\ell\geq 3$ and $\mathfrak{b}-1\geq 1$, this obviously changes sign as we let the $\rad_j$ range over all of $\RR^+$. To be more concrete and specific, we in particular
  obtain a non-Ricci-flat example with zero mass if 
we take $\mathfrak{b}-1=\ell-2\geq 1$ and $\rad_j=\log \sqrt{5}$ for every $j=1, \ldots , \mathfrak{b}-1$. 
\end{proof}

Interestingly,  though,  the above  construction  depends in practice on a choice of  $(M,J)$ which is 
{\em non-minimal}, in the sense of being the blow-up of another  complex surface. This appears 
to be essential. Indeed,  the following consequence of Theorem \ref{onpar}, 
 which was graciously  pointed out to 
us by Cristiano Spotti, offers a  systematic  
result  
along these lines:

\begin{cor} \label{thespot}
Let $(M^4,g,J)$ be an ALE scalar-flat  K\"ahler surface, and suppose that    $(M,J)$  is 
 the {\bf minimal}
resolution of a surface singularity. Then ${\zap m}(M,g)\leq 0$, with equality iff $g$ is Ricci-flat. 
\end{cor}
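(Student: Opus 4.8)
The plan is to evaluate the topological mass formula of Theorem~\ref{onpar} in the homology basis furnished by the exceptional curves, and to read off the sign of the answer from two structural facts about minimal resolutions: the negative-definite intersection form is a Stieltjes matrix, and minimality controls the sign of $c_1$ on each exceptional curve.

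First I would fix the basis. Writing $\pi\colon M\to V$ for the minimal resolution, $M$ deformation retracts onto the exceptional divisor $\pi^{-1}(0)$ (for an ALE space the link is a space form $S^3/\Gamma$, so $V=\CC^2/\Gamma$ is contractible), whose irreducible components $E_1,\ldots,E_{\mathfrak b}$ therefore span $H_2(M,\RR)$; since the intersection matrix $Q=[E_i\cdot E_j]$ of a resolution of a normal surface singularity is negative definite, it is non-degenerate, so the $E_j$ form a basis. Two signs now enter. On one hand, distinct irreducible curves meet non-negatively while each $E_j^2<0$, so $\tilde Q:=-Q$ is a symmetric positive-definite matrix with non-positive off-diagonal entries, i.e.\ a Stieltjes matrix; hence $\tilde Q^{-1}$ has all entries $\ge 0$. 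On the other hand, $c_1(M,J)=-[K_M]$, and the defining property of the \emph{minimal} resolution is that $K_M$ is $\pi$-nef, i.e.\ $K_M\cdot E_j\ge 0$ for every $j$ (equivalently, no exceptional $(-1)$-curves); thus $\int_{E_j}c_1=-K_M\cdot E_j\le 0$ for all $j$.

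With these in hand the inequality is immediate. The vector $\vec a=Q^{-1}\vec c$ of Theorem~\ref{onpar}, where $c_j=\int_{E_j}c_1\le 0$, satisfies $\vec a=-\tilde Q^{-1}\vec c\ge 0$ entrywise, since $\tilde Q^{-1}\ge 0$ and $\vec c\le 0$. As each $E_j$ is a holomorphic curve, its K\"ahler area $\int_{E_j}\omega$ is strictly positive, so
\[
{\zap m}(M,g)=-\frac{1}{3\pi}\sum_{j=1}^{\mathfrak b} a_j\int_{E_j}\omega\le 0.
\]
For the equality case, ${\zap m}=0$ forces every summand to vanish, hence $a_j=0$ for all $j$; since $Q$ is invertible this gives $\vec c=0$, i.e.\ $\int_{E_j}c_1=0$ for all $j$, and because the $E_j$ span $H_2(M,\RR)$ this means $c_1=0$ in $H^2(M,\RR)$.

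It remains to upgrade $c_1=0$ to Ricci-flatness, and this is where I expect the only genuine subtlety. Because $g$ is scalar-flat K\"ahler, its Ricci form $\rho$ is primitive, hence anti-self-dual, hence (being closed) harmonic, and with the ALE fall-off it lies in $L^2$. By Lemma~\ref{legit} the map $H^2_c\to H^2$ is an isomorphism and every de~Rham class has a \emph{unique} $L^2$ harmonic representative, so $\rho$ is precisely the $L^2$ harmonic representative of $[\rho]=2\pi c_1$; thus $c_1=0$ forces $\rho\equiv 0$, i.e.\ $g$ is Ricci-flat, the converse being trivial. The main obstacle is therefore not the inequality, which reduces to the Stieltjes positivity of $-Q^{-1}$ together with the minimality-driven sign $K_M\cdot E_j\ge 0$, but rather confirming that the Ricci form is genuinely the $L^2$ harmonic representative of its class, so that Lemma~\ref{legit} can be invoked in the equality case.
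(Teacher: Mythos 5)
Your proposal is correct and follows essentially the same route as the paper's own proof: both evaluate the formula of Theorem~\ref{onpar} in the basis of exceptional curves, deduce $a_j\geq 0$ from the sign of $\int_{E_j}c_1$ (forced by minimality) together with the entrywise sign of $Q^{-1}$, and settle the equality case by identifying $\rho/2\pi$ as the unique $L^2$ harmonic representative of $c_1$ via Lemma~\ref{legit}. The only differences are in how the two auxiliary facts are sourced: where you invoke $\pi$-nefness of $K_M$ and the Stieltjes/M-matrix theorem for the non-negativity of $(-Q)^{-1}$, the paper instead uses adjunction on smooth rational curves of self-intersection $\leq -2$ and cites \cite{alexeev} for the entrywise non-positivity of $Q^{-1}$.
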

\begin{proof}
Choose a  basis for $H_2$ that is 
represented  by 
  a collection of smooth 
  rational curves $E_j$. Because the resolution is assumed to be minimal, each $E_j$ has
  self-intersection $\leq -2$, and  adjunction therefore tells us  that 
$\int_{E_j}  c_1\leq 0$  for every $j$. However,  it is also known 
\cite[Remark 3.1.2]{alexeev} that every entry in the inverse $Q^{-1}$ of the intersection matrix 
of such a minimal resolution 
is  {\em non-positive}. Thus, the coefficients  defined by equation \eqref{caementium}  all satisfy $a_j\geq 0$,
and the mass formula \eqref{cement} therefore produces a  non-positive answer. Moreover, 
if the mass is zero, then  $a_j=0$ for all $j$, so that $Q\vec{a}$ vanishes and $c_1=0$.
But Lemma \ref{legit} tells us that  $c_1$ is represented by a unique $L^2$ harmonic $2$-form,
and, since $g$ is scalar-flat K\"ahler, one such representative is $\rho/2\pi$, where $\rho$ is the Ricci-form of $(M,g,J)$. 
The mass  therefore  vanishes  for such a manifold
 if and only if the metric is Ricci-flat. 
\end{proof}

Lock and Viaclovsky  \cite{vialock} have recently given  a systematic construction of ALE scalar-flat K\"ahler metrics on  minimal resolutions of surface singularities,
thereby putting the earlier examples of Calderbank and Singer \cite{caldsing} into a broader context.
The above Corollary now shows  that all of these  examples actually  have  negative mass. 
 
\section{The Positive   Mass Theorem} 
\label{positive}

We   conclude this article by proving the positive mass theorem for K\"ahler manifolds, along
with our related Penrose-type  inequality. 

Suppose that $(M^{2m},g, J)$ is an AE K\"ahler manifold. Then, as we saw in Proposition \ref{ae2}, 
there is a proper holomorphic map $F: M\to \CC^m$ which  has degree $1$, and which is a biholomorphism outside a compact set.  We now consider the
holomorphic $m$-form 
$$\Upsilon = F^* dz^1\wedge \cdots \wedge dz^m ,$$
which is a holomorphic section of the canonical line bundle of $M$, and which exactly vanishes at the 
set of critical points of $F$. Because this zero set is locally the zero set of a non-trivial holomorphic function, 
it is purely of complex codimension $1$, and we moreover know this locus is 
compact because $F$ is a biholomorphism outside of a compact set. Breaking up the locus $\Upsilon=0$ as a finite union 
of  its irreducible components
$D_j$, and assigning each of these an integer multiplicity $n_j$ given by the order of
vanishing of $\Upsilon$ along $D_j$, we can thus express the divisor $D$  as 
$$D = \sum_j n_jD_j~.$$
Since $\Upsilon$ is a holomorphic section of the canonical line bundle $K_M$, 
the homology class $[D] = \sum n_j [D_j]$  is then 
Poincar\'e dual to $\clubsuit (c_1(K_M))= - \clubsuit (c_1(M,J))$. The mass formula 
of Theorem \ref{gamma} therefore can be rewritten as  
$${\zap m}(M,g) =  \frac{1}{(2m-1)\pi^{m-1}} [\omega]^{m-1} (D) +
\frac{(m-1)!}{4(2m-1)\pi^m} \int_M s_g d\mu_g ,$$
and we therefore obtain the  Penrose inequality promised by Theorem \ref{epsilon}:

\begin{thm}\label{roger}
Suppose that $(M^{2m},g, J)$ is an AE K\"ahler manifold with scalar curvature $s\geq 0$. Then, in terms
of the complex hypersurfaces $D_j$ and positive integer multiplicities $n_j$ described above, 
$${\zap m}(M,g) \geq   \frac{(m-1)!}{(2m-1)\pi^{m-1}}\sum_j n_j\mbox{Vol}\, (D_j)~,$$
with equality iff $(M,g,J)$ is scalar-flat K\"ahler. Moreover, $\bigcup_j D_j \neq  \varnothing$ if
$(M,J)\neq \CC^m$. 
\end{thm}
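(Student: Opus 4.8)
The plan is to read the theorem directly off the rewritten mass formula displayed just above the statement,
$${\zap m}(M,g) = \frac{1}{(2m-1)\pi^{m-1}} [\omega]^{m-1} (D) + \frac{(m-1)!}{4(2m-1)\pi^m} \int_M s_g \, d\mu_g,$$
and to analyze its two summands separately. First I would interpret the topological term $[\omega]^{m-1}(D)$ geometrically. Since each irreducible component $D_j$ is a complex hypersurface, it is calibrated by $\omega^{m-1}/(m-1)!$: Wirtinger's theorem says that on the smooth locus of $D_j$ the restriction of $\omega^{m-1}/(m-1)!$ coincides with the induced Riemannian volume form. Because $D_j$ is compact and its singular set is a complex-analytic subset of complex codimension at least one, hence of measure zero and of locally finite $(2m-2)$-volume, integration over $D_j$ is unproblematic, and one gets $\int_{D_j}\omega^{m-1} = (m-1)!\,\mathrm{Vol}(D_j)$. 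Summing against the positive integer multiplicities $n_j$ then yields $[\omega]^{m-1}(D) = (m-1)!\sum_j n_j \mathrm{Vol}(D_j)$, so the first summand of the mass equals $\frac{(m-1)!}{(2m-1)\pi^{m-1}}\sum_j n_j\mathrm{Vol}(D_j)$.

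For the inequality itself I would simply observe that the second summand $\frac{(m-1)!}{4(2m-1)\pi^m}\int_M s_g\,d\mu_g$ is manifestly nonnegative once $s\geq 0$; discarding it leaves exactly the asserted Penrose bound. The equality discussion is then immediate: since each $n_j\mathrm{Vol}(D_j)\geq 0$, equality in the bound forces $\int_M s_g\,d\mu_g = 0$, and because $s\geq 0$ this is equivalent to $s\equiv 0$, i.e.\ to $(M,g,J)$ being scalar-flat K\"ahler; conversely, scalar-flatness makes the curvature term vanish identically, so equality holds.

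Finally, for the nonemptiness of $\bigcup_j D_j$ I would argue by contraposition. If the divisor were empty, then $\Upsilon = F^*(dz^1\wedge\cdots\wedge dz^m)$ would be nowhere vanishing, so $F$ would have no critical points and would be a holomorphic local biholomorphism. But $F$ is proper by Proposition \ref{ae2}, and a proper local homeomorphism onto the connected, locally compact space $\CC^m$ is a covering map with finite fibers; since $F$ has degree one and $\CC^m$ is simply connected, the covering is trivial, forcing $F$ to be a biholomorphism and hence $(M,J)\cong\CC^m$. The only steps that need genuine care are this last one, namely verifying that a proper holomorphic map with nowhere-vanishing Jacobian really is a covering map, together with the routine but not entirely cosmetic point that the volume integral over the possibly singular varieties $D_j$ is finite and correctly evaluated by the calibration identity. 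Everything else is bookkeeping once the rewritten mass formula is in hand, so in truth the substantive content of this theorem lives in the earlier results it invokes rather than in the deduction itself.
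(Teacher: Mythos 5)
Your proposal is correct and follows essentially the same route as the paper's own proof: identify the divisor term with $(m-1)!$ times the weighted sum of volumes via the calibration property of $\omega^{m-1}/(m-1)!$ on the regular locus of each $D_j$, drop the manifestly nonnegative scalar curvature integral, and deduce nonemptiness of $\bigcup_j D_j$ from the fact that a degree-one proper holomorphic local diffeomorphism must be a global biholomorphism. The only difference is that you spell out details the paper leaves implicit (Wirtinger's theorem, finiteness of volume across the singular set, and the covering-map argument for the last step), which is fine but not a different method.
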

\begin{proof} Since we have assumed that $s\geq 0$, the scalar curvature integral in the mass formula is 
non-negative, and equals zero only if $g$ is scalar-flat. Since the volume form induced by $g$ 
on the regular locus of
$D_j$ is just $\omega^{m-1}/(m-1)!$, we can therefore transform $[\omega]^{m-1}(D)$ into 
$(m-1)!$ times a sum of volumes, weighted by multiplicities, and the stated inequality now follows
from the mass formula. 

Finally, $\bigcup_j D_j$ can only be empty if  $\Upsilon = F^*dz^1 \wedge \cdots \wedge dz^m$ is 
everywhere non-zero. But this happens iff $F$ has no critical points,  or in other words iff $F$ is a local 
diffeomorphism. However, $F$ is a degree $1$ proper holomorphic map. Thus the fact that $f$ is a
local diffeomorphism implies that 
it  is  actually a global biholomorphism.
\end{proof}

With this in hand, we can now easily read off our Positive Mass Theorem,  announced in the introduction as Theorem \ref{delta}: 

\begin{thm}
Suppose that $(M^{2m},g, J)$ is an AE K\"ahler manifold with scalar curvature $s\geq 0$. Then
its  mass ${\zap m}(M,g)$ is non-negative, and equals zero only if $(M,g,J)$ is flat.
\end{thm}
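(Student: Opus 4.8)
The plan is to read this off almost immediately from the Penrose-type inequality of Theorem~\ref{roger}, which has already done the substantive work. Since $s\geq 0$ by hypothesis, Theorem~\ref{roger} gives
$$
{\zap m}(M,g) \geq \frac{(m-1)!}{(2m-1)\pi^{m-1}}\sum_j n_j\,\Vol(D_j),
$$
and the right-hand side is manifestly non-negative, being a sum of the non-negative volumes $\Vol(D_j)$ weighted by the positive integers $n_j$. This at once yields ${\zap m}(M,g)\geq 0$ and disposes of the inequality.

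For the rigidity clause, I would suppose ${\zap m}(M,g)=0$ and extract two consequences from the sandwich $0={\zap m}(M,g)\geq \tfrac{(m-1)!}{(2m-1)\pi^{m-1}}\sum_j n_j\Vol(D_j)\geq 0$. First, the sum of volumes must vanish, so each $\Vol(D_j)=0$ and hence $\bigcup_j D_j=\varnothing$; by the final assertion of Theorem~\ref{roger} this forces $(M,J)\cong\CC^m$. Second, both ends of the sandwich agree, so equality holds in the Penrose inequality, and Theorem~\ref{roger} therefore tells us that $g$ is scalar-flat K\"ahler.

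Next I would upgrade ``scalar-flat'' to ``Ricci-flat.'' As recalled in the proof of Corollary~\ref{thespot}, when $g$ is scalar-flat K\"ahler the form $\rho/2\pi$ is precisely the unique $L^2$ harmonic representative of $c_1$ furnished by Lemma~\ref{legit}. But $(M,J)\cong\CC^m$ has $H^2(M)=0$, so $c_1=0$, and this representative must therefore vanish identically. Hence $\rho\equiv 0$, and $g$ is a complete, Ricci-flat, asymptotically Euclidean K\"ahler metric on $\CC^m$.

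The hard part is the last step: showing that a complete Ricci-flat AE manifold is genuinely flat. Here I would invoke the rigidity case of the Bishop--Gromov volume comparison theorem. Because $\mathrm{Ric}\equiv 0$, the ratio $\Vol(B_r(p))/\Vol(B^{\mathrm{Eucl}}_r)$ is monotone non-increasing in $r$; it tends to $1$ as $r\to 0^+$ because $g$ is smooth, and it tends to $1$ as $r\to\infty$ precisely because $M$ is \emph{asymptotically Euclidean} with trivial group at infinity. This is exactly the point at which the AE hypothesis, as opposed to mere ALE, becomes indispensable: for a nontrivial group $\Gamma$ the limiting ratio would instead be $1/|\Gamma|<1$, consistent with the non-flat ALE examples of negative mass discussed above. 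A non-increasing function that equals $1$ at both ends is identically $1$, so equality holds in Bishop--Gromov for every radius, and its rigidity case then shows that each metric ball is isometric to a Euclidean ball. Thus $(M,g)$ is isometric to flat $\RR^{2m}$, completing the proof.
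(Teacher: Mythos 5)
Your proposal is correct and follows essentially the same route as the paper's own proof: non-negativity from the Penrose-type inequality of Theorem~\ref{roger}, then in the equality case $(M,J)\cong\CC^m$ with $g$ scalar-flat K\"ahler, vanishing of the Ricci form via the $L^2$-harmonic representative of $c_1=0$ from Lemma~\ref{legit}, and finally Bishop--Gromov rigidity to conclude flatness from Ricci-flatness plus asymptotically Euclidean volume growth. Your write-up merely makes explicit two points the paper leaves terse (that $c_1=0$ because $H^2(\CC^m)=0$, and the two-sided limit argument in Bishop--Gromov), which is fine but not a different proof.
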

\begin{proof}
By Theorem \ref{roger}, the mass is positive unless $g$ is scalar-flat K\"ahler, 
$\bigcup_jD_j = \varnothing$,  and $(M,J)= \CC^m$. However, the Ricci form $\rho$ of 
an ALE scalar-flat K\"ahler
metric is an $L^2$ harmonic form, and so, by Lemma \ref{legit},  must vanish if the cohomology class $2\pi c_1$
it represents vanishes. Thus, $g$ would necessarily be a Ricci-flat AE metric on $\CC^m$.
But the AE condition implies that a metric's  volume growth is asymptotically exactly Euclidean, 
and the Bishop-Gromov inequality  thus implies that a complete Ricci-flat metric 
with this property is necessarily flat. 
\end{proof}

\appendix
\makeatletter
\def\@seccntformat#1{Appendix\ \csname the#1\endcsname :\quad}
\makeatother

\section{Normalization of the Mass}

In this appendix, we provide a  ``physical'' explanation
 of our normalization of the mass integral. We work throughout in units where $G=c=1$.
 
In the absence of matter,  tidal forces in Newtonian gravitation distort the shape of a cloud  of test particles without changing its volume, to lowest order in time. 
Thus, the acceleration vector field due to gravitation should  be divergence-free in empty space. If we assume that an isolated object generates
an acceleration field that points towards the object, with magnitude only  depending on the distance $\varrho$ from the source, 
 the   acceleration field in dimension $n$ must therefore take the form 
$${\vec{a}}= \nabla\left( \frac{{\zap M}}{\varrho^{n-2}}\right)$$
for some constant ${\zap M}$,  which we now declare to be the {\em mass} of the source. 
In the classical case of $n=3$, this of course reproduces Newton's law of gravitation. 
Since a test particle  following a circular orbit of radius $\varrho$ and  angular frequency $\omega$ about the origin exhibits  an in-pointing  radial 
acceleration of magnitude $\varrho \omega^2$, this acceleration can be ascribed to  our   gravitational  field iff 
$$\omega^2 = (n-2)\frac{{\zap M}}{\varrho^n}.$$
This  is a crude generalization of Kepler's third law of planetary motion.

Einstein's vacuum equations state that the Ricci curvature of the space-time metric 
should vanish in the absence of matter;  inspection of  Jacobi's equation  reveals that this is again equivalent 
to  requiring  that tidal forces  distort the shape of a cloud  of test particles without changing its volume, to lowest order in time. In space-time dimension 
$n+1$, the general spherically symmetric solution of these equations is the generalized Schwarzschild metric 
$${\zap g}=  -\left(1- \frac{A}{\varrho^{n-2}}\right) dt^2 +
\left(1- \frac{A}{\varrho^{n-2}}\right)^{-1} d\varrho^2 + \varrho^2  \mathfrak{h}$$
where $\mathfrak{h}$ denotes the standard unit-radius metric on $S^{n-1}$ and $A$ is a real constant. Notice
that 
$$\xi = \frac{\partial}{\partial t}  + \omega  \frac{\partial}{\partial \theta}$$
is a Killing field for this metric, where $\partial/\partial\theta$ is the usual generator of  rotation of $S^{n-1}\subset \RR^n$ around  $\RR^{n-2}\subset \RR^n$. 
Now a flow line of a Killing field is a geodesic iff it passes through a critical point of ${\zap g}(\xi , \xi)$; indeed, Killing's equation $\nabla_{(a}\xi_{b)}=0$ tells us that 
$$\left( \nabla_\xi \xi\right)_b= \xi^a \nabla_a \xi_b= - \xi^a\nabla_b \xi_a =- \frac{1}{2} \nabla_b\xi^a \xi_a =  -\frac{1}{2} \nabla_b \, {\zap g}( \xi , \xi),$$
and the claim therefore follows from the fact that ${\zap g}( \xi , \xi)$ is  constant along the flow. However, restricting to the great circle in 
$S^{n-1}$ where $\partial/\partial \theta$ has maximal length, 
$$
{\zap g}(\xi , \xi ) = -\left(1- \frac{A}{\varrho^{n-2}}\right) + \omega^2 \varrho^2~,
$$
and the critical-point condition then reduces to 
$$0 = \frac{d}{d\varrho} \left[-\left(1- \frac{A}{\varrho^{n-2}}\right) + \omega^2 \varrho^2 \right]= - \frac{(n-2)A}{\varrho^{n-1}}+ 2\omega^2\varrho~. 
$$
Thus, when 
$$
\omega^2= \frac{(n-2)}{2}  \frac{A}{\varrho^n}
$$
a flow-line is a space-time geodesic, and represents a test particle moving in a  circular orbit  at constant angular velocity $\omega$. 
Comparison with circular orbits in the Newtonian model discussed above therefore
 leads us to interpret the Schwarzschild metric as representing the gravitational field of an object of mass 
$$
{\zap M} = \frac{A}{2}  . 
$$

The  spatial slice $t=0$ of the Schwarzschild metric 
$$g= \left( 1- \frac{A}{\varrho^{n-2}}\right)^{-1} d\varrho^2+ \varrho^2 \mathfrak{h}~$$
is totally geodesic, and provides the prototype  for defining the mass of an ALE manifold. 
If we interpret $\varrho$ as the Euclidean radius  in $\RR^n$, this metric takes the form
$$g_{jk} = \delta_{jk} + \frac{A}{\varrho^{n}} x_jx_k + O(\frac{1}{\varrho^{n-1}})$$
so that 
$$
g_{jk,\ell} = \frac{A}{\varrho^{n}}(\delta_{j\ell}x_k+ \delta_{k\ell}x_j-n \frac{x_jx_kx_\ell}{\varrho^2}) + O(\frac{1}{\varrho^{n}})
$$
Thus 
$$
g_{ij,i} -g_{ii,j}= \frac{A}{\varrho^{n}}(\delta_{ji}x_i+ \delta_{ii}x_j-\delta_{ij}x_i- \delta_{ij}x_i) + O(\frac{1}{\varrho^{n}})= (n-1)  \frac{A}{\varrho^{n-1}}\nu_j + O(\frac{1}{\varrho^{n}})
$$
and 
\begin{eqnarray*}
\lim_{\varrho\to \infty} \int_{S_\rho} \left[ g_{ij,i} -g_{ii,j}\right] \mathbf{n}^jd\mathfrak{a}_E&=& (n-1) A \, \mbox{Vol} (S^{n-1}) 
\\&=& 2(n-1) {\zap M}\, \mbox{Vol} (S^{n-1})\\&=& \frac{4(n-1)\pi^{n/2}}{\eg (\frac{n}{2})} {\zap M}
\end{eqnarray*}
Thus, defining the mass of an $n$-dimensional ALE manifold (at a given end) to be 
$$
{\zap m}(M, g) := \lim_{\varrho\to \infty}   \frac{\eg (\frac{n}{2})}{4(n-1)\pi^{n/2}} \int_{S_\rho/\Gamma_j} \left[ g_{ij,i} -g_{ii,j}\right] \mathbf{n}^jd\mathfrak{a}_E
$$
will result in a mass of ${\zap m}= {\zap M}$ for the $t=0$ spatial slice of the Schwarzschild metric. In 
particular, when $n=3$, the normalizing constant simplifies to 
$1/16\pi$, which is the well-established value found throughout the literature. 

\bigskip 
 
\noindent {\bf Acknowledgments:} The authors would like to thank the Isaac Newton Institute, Cambridge, 
for its hospitality during the writing of this article. They would also like to thank 
Claudio Arezzo, 
Ronan Conlan, 
Michael Eastwood,
Gary Gibbons, 
Denny Hill, 
Gustav Holzegel,
John Lott, 
Rafe Mazzeo, 
Bianca Santoro, 
Cristiano Spotti, 
Ioana \c{S}uvaina, and the anonymous referees  
   for many useful comments and suggestions.

\vfill
\noindent
{\sc Department of Mathematics, University of Maryland, College Park, MD 20742-4015, USA}

\bigskip 

\noindent
{\sc Department of Mathematics, State University of New York, Stony Brook, NY 11794-3651, USA}


\begin{thebibliography}{10}

\bibitem{alexeev}
{\sc V.~Alexeev}, {\em Classification of log canonical surface singularities:
  arithmetical proof}, in Flips and Abundance for Algebraic Threefolds,
  Soci\'et\'e Math\'ematique de France, Paris, 1992, pp.~47--58.
\newblock Papers from the Second Summer Seminar on Algebraic Geometry held at
  the University of Utah, Salt Lake City, Utah, August 1991, Ast{\'e}risque No.
  211 (1992).

\bibitem{andgrau}
{\sc A.~Andreotti and H.~Grauert}, {\em Th\'eor\`eme de finitude pour la
  cohomologie des espaces complexes}, Bull. Soc. Math. France, 90 (1962),
  pp.~193--259.

\bibitem{arezzomass}
{\sc C.~Arezzo, R.~Lena, and L.~Mazzieri}, {\em On the resolution of extremal
  and constant scalar curvature {K}aehler orbifolds}, Int. Math. Res. Not.
  IMRN,  (2016).
\newblock First published online December 19, 2015, doi:10.1093/imrn/mv346.

\bibitem{adm}
{\sc R.~Arnowitt, S.~Deser, and C.~W. Misner}, {\em Coordinate invariance and
  energy expressions in general relativity.}, Phys. Rev. (2), 122 (1961),
  pp.~997--1006.

\bibitem{baily}
{\sc W.~L. Baily}, {\em On the imbedding of {$V$}-manifolds in projective
  space}, Amer. J. Math., 79 (1957), pp.~403--430.

\bibitem{bpv}
{\sc W.~Barth, C.~Peters, and A.~Van~de Ven}, {\em Compact Complex Surfaces},
  vol.~4 of Ergebnisse der Mathematik und ihrer Grenzgebiete (3),
  Springer-Verlag, Berlin, 1984.

\bibitem{bartnik}
{\sc R.~Bartnik}, {\em The mass of an asymptotically flat manifold}, Comm. Pure
  Appl. Math., 39 (1986), pp.~661--693.

\bibitem{bes}
{\sc A.~L. Besse}, {\em Einstein Manifolds}, vol.~10 of Ergebnisse der
  Mathematik und ihrer Grenzgebiete (3), Springer-Verlag, Berlin, 1987.

\bibitem{braymass}
{\sc H.~L. Bray}, {\em Proof of the {R}iemannian {P}enrose inequality using the
  positive mass theorem}, J. Differential Geom., 59 (2001), pp.~177--267.

\bibitem{brill}
{\sc D.~R. Brill}, {\em On the positive definite mass of the
  {B}ondi-{W}eber-{W}heeler time-symmetric gravitational waves}, Ann. Physics,
  7 (1959), pp.~466--483.

\bibitem{calabix}
{\sc E.~Calabi}, {\em Extremal {K}\"ahler metrics}, in Seminar on Differential
  Geometry, vol.~102 of Ann. Math. Studies, Princeton Univ. Press, Princeton,
  N.J., 1982, pp.~259--290.

\bibitem{caldsing}
{\sc D.~M.~J. Calderbank and M.~A. Singer}, {\em Einstein metrics and complex
  singularities}, Invent. Math., 156 (2004), pp.~405--443.

\bibitem{carpar}
{\sc G.~Carron}, {\em Cohomologie {$L^2$} et parabolicit\'e}, J. Geom. Anal.,
  15 (2005), pp.~391--404.

\bibitem{chenlebweb}
{\sc X.~X. Chen, C.~LeBrun, and B.~Weber}, {\em On conformally {K}\"ahler,
  {E}instein manifolds}, J. Amer. Math. Soc., 21 (2008), pp.~1137--1168.

\bibitem{admchrus}
{\sc P.~Chru{\'s}ciel}, {\em Boundary conditions at spatial infinity from a
  {H}amiltonian point of view}, in Topological Properties and Global Structure
  of Space-Time ({E}rice, 1985), vol.~138 of NATO Adv. Sci. Inst. Ser. B Phys.,
  Plenum, New York, 1986, pp.~49--59.
\newblock Digitized version available at {\tt \footnotesize
  http://homepage.univie.ac.at/piotr.chrusciel/scans/index.html}.

\bibitem{prinzip}
{\sc M.~Commichau and H.~Grauert}, {\em Das formale {P}rinzip f\"ur kompakte
  komplexe {U}ntermannigfaltigkeiten mit {$1$}-positivem {N}ormalenb\"undel},
  in Recent Developments in Several Complex Variables, vol.~100 of Ann. of
  Math. Stud., Princeton Univ. Press, Princeton, N.J., 1981, pp.~101--126.

\bibitem{heinlon2}
{\sc R.~J. Conlon and H.-J. Hein}, {\em Asymptotically conical {C}alabi-{Y}au
  manifolds, {III}}.
\newblock e-print arXiv:1405.7140 [math.DG], 2014.

\bibitem{thick}
{\sc M.~Eastwood and C.~LeBrun}, {\em Thickening and supersymmetric extensions
  of complex manifolds}, Amer. J. Math., 108 (1986), pp.~1177--1192.

\bibitem{gibhawk}
{\sc G.~W. Gibbons and S.~W. Hawking}, {\em Classification of gravitational
  instanton symmetries}, Comm. Math. Phys., 66 (1979), pp.~291--310.

\bibitem{grauertex}
{\sc H.~Grauert}, {\em \"{U}ber {M}odifikationen und exzeptionelle analytische
  {M}engen}, Math. Ann., 146 (1962), pp.~331--368.

\bibitem{GH}
{\sc P.~Griffiths and J.~Harris}, {\em Principles of Algebraic Geometry},
  Wiley-Interscience, New York, 1978.

\bibitem{hhn}
{\sc M.~Haskins, H.-J. Hein, and J.~Nordstr{\"o}m}, {\em Asymptotically
  cylindrical {C}alabi-{Y}au manifolds}, J. Differential Geom., 101 (2015),
  pp.~213--265.

\bibitem{hahuma}
{\sc T.~Hausel, E.~Hunsicker, and R.~Mazzeo}, {\em Hodge cohomology of
  gravitational instantons}, Duke Math. J., 122 (2004), pp.~485--548.

\bibitem{hiltay}
{\sc C.~D. Hill and M.~Taylor}, {\em Integrability of rough almost complex
  structures}, J. Geom. Anal., 13 (2003), pp.~163--172.

\bibitem{formal}
{\sc A.~Hirschowitz}, {\em On the convergence of formal equivalence between
  embeddings}, Ann. of Math. (2), 113 (1981), pp.~501--514.

\bibitem{hitproj}
{\sc N.~J. Hitchin}, {\em Complex manifolds and {E}instein's equations}, in
  Twistor {G}eometry and {N}onlinear {S}ystems (Primorsko, 1980), Springer,
  1982, pp.~73--99.

\bibitem{hondale2}
{\sc N.~Honda}, {\em Scalar flat {K}\"ahler metrics on affine bundles over
  {$\Bbb C\Bbb P^1$}}, SIGMA. Symmetry Integrability Geom. Methods Appl., 10
  (2014), pp.~1--25, Paper 046.

\bibitem{huilmpen}
{\sc G.~Huisken and T.~Ilmanen}, {\em The {R}iemannian {P}enrose inequality},
  Internat. Math. Res. Notices,  (1997), pp.~1045--1058.

\bibitem{huybr}
{\sc D.~Huybrechts}, {\em Complex Geometry. An Introduction}, Universitext,
  Springer-Verlag, Berlin, 2005.

\bibitem{joycebook}
{\sc D.~D. Joyce}, {\em Compact Manifolds with Special Holonomy}, Oxford
  Mathematical Monographs, Oxford University Press, Oxford, 2000.

\bibitem{kodsub}
{\sc K.~Kodaira}, {\em A theorem of completeness of characteristic systems for
  analytic families of compact submanifolds of complex manifolds}, Ann. of
  Math. (2), 75 (1962), pp.~146--162.

\bibitem{lebthes}
{\sc C.~LeBrun}, {\em Spaces of {C}omplex {G}eodesics and {R}elated
  {S}tructures}, PhD thesis, Oxford University, 1980.
\newblock Digitized version available at {\tt \footnotesize
  http://ora.ox.ac.uk/objects/uuid:e29dd99c-0437-4956-8280-89dda76fa3f8}.

\bibitem{lpa}
\leavevmode\vrule height 2pt depth -1.6pt width 23pt, {\em Counter-examples to
  the generalized positive action conjecture}, Comm. Math. Phys., 118 (1988),
  pp.~591--596.

\bibitem{mcp2}
\leavevmode\vrule height 2pt depth -1.6pt width 23pt, {\em Explicit self-dual
  metrics on {${\mathbb {C}}{\mathbb {P}}\sb 2\#\cdots\#{\mathbb {C}}{\mathbb
  {P}}\sb 2$}}, J. Differential Geom., 34 (1991), pp.~223--253.

\bibitem{lebmero}
\leavevmode\vrule height 2pt depth -1.6pt width 23pt, {\em Twistors, {K}\"ahler
  manifolds, and bimeromorphic geometry. {I}}, J. Amer. Math. Soc., 5 (1992),
  pp.~289--316.

\bibitem{lebmask}
{\sc C.~LeBrun and B.~Maskit}, {\em On optimal 4-dimensional metrics}, J. Geom.
  Anal., 18 (2008), pp.~537--564.

\bibitem{lebpoonsym}
{\sc C.~LeBrun and Y.~S. Poon}, {\em Self-dual manifolds with symmetry}, in
  Differential geometry: geometry in mathematical physics and related topics
  (Los Angeles, CA, 1990), vol.~54 of Proc. Sympos. Pure Math., Amer. Math.
  Soc., Providence, RI, 1993, pp.~365--377.

\bibitem{lp}
{\sc J.~Lee and T.~Parker}, {\em The {Y}amabe problem}, Bull. Am. Math. Soc.,
  17 (1987), pp.~37--91.

\bibitem{vialock}
{\sc M.~Lock and J.~Viaclovsky}, {\em A {sm\"{o}rg{\aa}sbord} of scalar-flat
  {K\"{a}hler} {ALE} surfaces}.
\newblock e-print arXiv:1410.6461 [math.DG], 2014.

\bibitem{malgrange}
{\sc B.~Malgrange}, {\em Sur l'int\'egrabilit\'e des structures
  presque-complexes}, in Symposia Mathematica, Vol. II (INDAM, Rome, 1968),
  Academic Press, London, 1969, pp.~289--296.

\bibitem{marshthes}
{\sc S.~Marshall}, {\em Deformations of {S}pecial {L}agrangian {S}ubmanifolds},
  PhD thesis, Oxford University, 2002.
\newblock Digitized version available at {\tt \footnotesize
  http://people.maths.ox.ac.uk/joyce/theses/theses.html}.

\bibitem{nijenwoo}
{\sc A.~Nijenhuis and W.~B. Woolf}, {\em Some integration problems in
  almost-complex and complex manifolds.}, Ann. of Math. (2), 77 (1963),
  pp.~424--489.

\bibitem{penineq}
{\sc R.~Penrose}, {\em Naked singularities}, Ann. New York Acad. Sci., 224
  (1973), pp.~125--134.
\newblock Sixth Texas Symposium on Relativistic Astrophysics.

\bibitem{rosimass}
{\sc Y.~Rollin and M.~Singer}, {\em Constant scalar curvature {K}\"ahler
  surfaces and parabolic polystability}, J. Geom. Anal., 19 (2009),
  pp.~107--136.

\bibitem{rossivec}
{\sc H.~Rossi}, {\em Vector fields on analytic spaces}, Ann. of Math. (2), 78
  (1963), pp.~455--467.

\bibitem{syaction}
{\sc R.~M. Schoen and S.~T. Yau}, {\em Complete manifolds with nonnegative
  scalar curvature and the positive action conjecture in general relativity},
  Proc. Nat. Acad. Sci. U.S.A., 76 (1979), pp.~1024--1025.

\bibitem{schoustr}
{\sc J.~{Schouten} and D.~{Struik}}, {\em {Einf\"uhrung in die neueren Methoden
  der Differentialgeometrie, vol. 2}}, {P. Noordhoff}, {Groningen}, 1938.

\bibitem{schouten}
{\sc J.~A. Schouten}, {\em Ricci-{C}alculus. {A}n Introduction to Tensor
  Analysis and its Geometrical Applications}, Springer-Verlag, Berlin, 1954.
\newblock 2nd ed.

\bibitem{weber}
{\sc B.~Weber}, {\em First {Betti} numbers of {K\"{a}hler} manifolds with
  weakly pseudo-convex boundary}.
\newblock e-print arXiv:1110.4571 [math.DG], 2011.

\bibitem{weylproj}
{\sc H.~{Weyl}}, {\em {Zur Infinitesimalgeometrie: Einordnung der projektiven
  und der konformen Auffassung}}, {Nachr. Ges. Wiss. G\"ottingen, Math.-Phys.
  Kl.}, 1921 (1921), pp.~99--112.

\bibitem{witmass}
{\sc E.~Witten}, {\em A new proof of the positive energy theorem}, Comm. Math.
  Phys., 80 (1981), pp.~381--402.

\end{thebibliography}
\end{document}